\newcommand{\angles}[1]{\langle #1 \rangle}
\newcommand{\half}{\frac{1}{2}}
\newcommand{\abs}[1]{\vert #1 \vert}
\newcommand{\norm}[1]{\left\Vert #1 \right\Vert}
\newcommand{\R}{\mathbb{R}}
\begin{document} 
\newtheorem{prop}{Proposition}[section]
\newtheorem{Def}{Definition}[section]
\newtheorem{theorem}{Theorem}[section]
\newtheorem{lemma}{Lemma}[section]
 \newtheorem{Cor}{Corollary}[section]

\title[Yang-Mills in Lorenz gauge]{\bf Low regularity well-posedness for the Yang-Mills system in 2D}
\author[Hartmut Pecher]{
{\bf Hartmut Pecher}\\
Fakult\"at f\"ur  Mathematik und Naturwissenschaften\\
Bergische Universit\"at Wuppertal\\
Gau{\ss}str.  20\\
42119 Wuppertal\\
Germany\\
e-mail {\tt pecher@math.uni-wuppertal.de}}
\date{}

\begin{abstract}
The Cauchy problem for the Yang-Mills system in two space dimensions is treated for data with minimal regularity assumptions. In the classical case of data in $L^2$-based Sobolev spaces we have to assume that the number of derivatives is more than $3/4$  above the critical regularity with respect to scaling. For data in $L^r$-based Fourier-Lebesgue spaces this result can be improved by $1/4$ derivative in the sense of scaling as $r \to 1$ .
\end{abstract}
\maketitle
\renewcommand{\thefootnote}{\fnsymbol{footnote}}
\footnotetext{\hspace{-1.5em}{\it 2010 Mathematics Subject Classification:} 
35Q40, 35L70 \\
{\it Key words and phrases:} Yang-Mills,  
local well-posedness, Lorenz gauge}
\normalsize 
\setcounter{section}{0}

\section{Introduction}

\noindent 
Let $\mathcal{G}$ be the Lie group $SO(n,\mathbb{R})$ (the group of orthogonal matrices of determinant 1) or $SU(n,\mathbb{C})$ (the group of unitary matrices of determinant 1) and $g$ its Lie algebra $so(n,\mathbb{R})$ (the algebra of trace-free skew symmetric matrices) or $su(n,\mathbb{C})$ (the algebra of trace-free skew hermitian matrices) with Lie bracket $[X,Y] = XY-YX$ (the matrix commutator). 
For given  $A_{\alpha}: \mathbb{R}^{1+n} \rightarrow g $ we define the curvature $F=F[A]$ by
\begin{equation}
\label{curv}
 F_{\alpha \beta} = \partial_{\alpha} A_{\beta} - \partial_{\beta} A_{\alpha} + [A_{\alpha},A_{\beta}] \, , 
\end{equation}
where $\alpha,\beta \in \{0,1,...,n\}$ and $D_{\alpha} = \partial_{\alpha} + [A_{\alpha}, \cdot \,]$ .

Then the Yang-Mills system is given by
\begin{equation}
\label{1}
D^{\alpha} F_{\alpha \beta}  = 0
\end{equation}
in Minkowski space $\mathbb{R}^{1+n} = \mathbb{R}_t \times \mathbb{R}^n_x$ , where $n \ge 2$, with metric $diag(-1,1,...,1)$. Greek indices run over $\{0,1,...,n\}$, Latin indices over $\{1,...,n\}$, and the usual summation convention is used.  
We use the notation $\partial_{\mu} = \frac{\partial}{\partial x_{\mu}}$, where we write $(x^0,x^1,...,x^n)=(t,x^1,...,x^n)$ and also $\partial_0 = \partial_t$.

Setting $\beta =0$ in (\ref{1}) we obtain the Gauss-law constraint
\begin{equation}
\nonumber
\partial^j F_{j 0} + [A^j,F_{j0} ]=0 \, .
\end{equation}

The total energy for Yang-Mills at time $t$  is given by
$$
  \mathcal E(t) =\sum_{0\le \alpha, \beta\le n} \int_{\R^n} \abs{F_{\alpha \beta}(t,x)}^2  \, dx,
$$
and is conserved for a smooth solution decaying sufficiently fast at spatial infinity. 
The  Yang-Mills system is invariant with respect to the scaling
$$ A_{\lambda}(t,x) = \lambda A(\lambda t,\lambda x) \quad, \quad F_{\lambda}(t,x)= \lambda^2 F(\lambda t,\lambda x) \, . $$
This implies
\begin{align*}
\|A_{\lambda}(0,\cdot)\|_{\dot{\widehat{H}}^{s,r}} = \lambda^{1+s-\frac{n}{r}} \|a_{\lambda}\|_{\dot{\widehat{H}}^{s,r}} \, , \\
\|F_{\lambda}(0,\cdot)\|_{\dot{\widehat{H}}^{l,r}} = \lambda^{2+l-\frac{n}{r}} \|f_{\lambda}\|_{\dot{\widehat{H}}^{l,r}}	\, .
\end{align*}
Here $\|u\|_{\widehat{H}^{s,r}} := \| \langle \xi \rangle^s \widehat{u}(\xi)\|_{L^{r'}}$ , where $r$ and $r'$ are dual exponents, and $\hat{\widehat{H}}^{s,r}$ denotes the homogeneous space.
Therefore the scaling critical exponent is $s = \frac{n}{r}-1$ for $A$ and $l= \frac{n}{r}-2$ for $F$ .

The system is gauge invariant. Given a sufficiently smooth function $U: {\mathbb R}^{1+n} \rightarrow \mathcal{G}$ we define the gauge transformation $T$ by $T A_0 = A_0'$ , 
$T(A_1,...,A_n) = (A_1',...,A_n'),$ where
\begin{align*}
A_{\alpha} & \longmapsto A_{\alpha}' = U A_{\alpha} U^{-1} - (\partial_{\alpha} U) U^{-1}  \, . 
\end{align*}

It is  well-known that if  $(A_0,...A_n)$ satisfies (\ref{curv}),(\ref{1}) so does $(A_0',...,A_n')$.

Hence we may impose a gauge condition. We exclusively study the case $n=2$ and Lorenz gauge $\partial^{\alpha}A_{\alpha} =0$. Other convenient gauges are the Coulomb gauge $\partial^j A_j=0$ and the temporal gauge $A_0 =0$.
Our aim is to obtain local well-posedness for data with minimal regularity. 

The classical case $n=3$ and $r=2$ with data in standard Sobolev spaces was considered by Klainerman and Machedon \cite{KM}, who made the decisive detection that
the nonlinearity satisfies a so-called null condition, which enabled them to prove global well-posedness in temporal and in Coulomb gauge in energy space. The corresponding result in Lorenz gauge, where the Yang-Mills equations can be formulated as a system of nonlinear wave equations, was shown by Selberg and Tesfahun \cite{ST}, who discovered 
that also in this case some of the nonlinearities have a null structure. Tesfahun \cite{T} improved this result to data without finite energy, namely for $(A(0),(\partial_t A)(0)) \in H^s \times H^{s-1}$ and $(F(0),(\partial_t F)(0)) \in H^l \times H^{l-1}$ with $s = \frac{6}{7}+\epsilon$ and $l = -\frac{1}{14}+\epsilon$ for any $\epsilon > 0$ by discovering an additional partial null structure. A further improvement was achieved by the author \cite{P2}, namely to $(s,l)=(\frac{5}{7}+\epsilon,-\frac{1}{7}+\epsilon)$ by modifying the solution spaces appropriately. In view of a recent result by S. Hong \cite{H1} who showed that the flow map is not $C^2$ if $s-1> 2r$ this result is in a sense sharp. This especially shows that the scaling critical regularity cannot be achieved by the used iteration method. S. Hong \cite{H} also proved local well-posedness of the Yang-Mills system in the Lorenz gauge for initial data in the Besov space $B^{\half}_{2,1} \times B^{-\half}_{2,1}$ , which is critical with respect to scaling, if an additional angular regularity is assumed.
Local well-posedness in energy space in 3D was also given by Oh \cite{O} using a new gauge, namely the Yang-Mills heat flow. He was also able to show that this solution can be globally extended \cite{O1}.  The Cauchy problem was also treated in higher space dimensions by several authors (\cite{KS},\cite{KT},\cite{KrT},\cite{KrSt},\cite{P1}).

 As the critical case in 3D  with respect to scaling is $(s,l)=(\half,-\half)$ , there is however still a gap, a phenomenon, which is also present in other gauges. 
In order to close this gap the author treated in \cite{P2} the local well-posedness problem for the Yang-Mills system in Lorenz gauge and space dimension $n = 3$ in the case of data $(A(0),(\partial_t A)(0)) \in \widehat{H}^{s,r} \times \widehat{H}^{s-1,r}$ and $(F(0),(\partial_t F)(0)) \in \widehat{H}^{l,r} \times \widehat{H}^{l-1,r}$ in Fourier-Lebesgue spaces for $r \neq 2$, which coincide with the classical Sobolev spaces $H^s$ for $r=2$. The assumption is that $s = \frac{16}{7r} - \frac{2}{7}+ \delta$ and $ l =\frac{15}{7r}-\frac{8}{7}+ \delta $ , where any  $\delta > 0$ is admissible.  Thus  $s \to 2+\delta$ and $l \to 1+\delta$ as $r \to 1$ , which is almost optimal with respect to scaling.

Such an approach was used by several authors already, starting with Vargas-Vega \cite{VV} for 1D Schr\"odinger equations. Gr\"unrock showed LWP for the modified KdV equation \cite{G}, a result which was improved by Gr\"unrock and Vega \cite{GV}. Gr\"unrock treated derivative nonlinear wave equations in 3+1 dimensions \cite{G1} and obtained an almost optimal result as $r \to 1$ with respect to scaling. Systems of nonlinear wave equations in the 2+1 dimensional case for nonlinearities which fulfill a null condition were considered by Grigoryan-Nahmod \cite{GN}. The latter two results are based on estimates by Foschi and Klainerman \cite{FK}.

The present paper is a continuation of \cite{P2}. Here we consider the local well-posedness problem for space dimension $n=2$. Our main result for the classical case of $L^2$-based data ($r=2$) is local well-posedness under the assumption $s > \frac{3}{4}$ and $l > -\frac{1}{4}$ , thus $\frac{3}{4}$ away from the critical exponents  $s=0$ and $l=-1$ with respect to scaling . In order to reduce this gap we again consider data in Fourier-Lebesgue spaces $\widehat{H}^{s,r} \times \widehat{H}^{l,r}$ . We obtain $ s \to \frac{3}{2}$ and $l \to \half $ as $r \to 1$ , which scales like $(s,l)= (\half,-\half)$ for the case $r=2$ . Thus the gap shrinks by $\frac{1}{4}$ .

The approach in the present paper is similar to \cite{P2}. In Chapter 2 we rewrite the Yang-Mills equations in Lorenz gauge as a system of semilinear wave equations. We also formulate the main theorems (Theorem \ref{Theorem1.1}, Cor. \ref{Cor} and Theorem \ref{Theorem1.3} and Cor. \ref{Cor3}). In chapter 3 we recall some basic facts about our solution spaces and a general local well-posedness theorem for the Cauchy problem for systems of nonlinear wave equations with data in Fourier-Lebesgue spaces, which allows to reduce it to estimates for the nonlinearities. In chapter 4 we give the final formulation of the system in terms of null forms as far as possible. The bi-, tri- and quadrilinear estimates sufficient for the local well-posedness result are formulated, where we rely on Tesfahun's paper \cite{T}. In chapter 5 we prove bilinear estimates for the null forms and for general bilinear terms in generalized Bourgain-Klainerman-Machedon spaces $H^r_{s,b}$ (and $X^r_{s,b,\pm}$) based on estimates by Foschi and Klainerman \cite{FK}, Gr\"unrock \cite{G}, Grigoryan-Nahmod \cite{GN} and Grigoryan-Tanguay \cite{GT}. In  chapter 6 we consider the case where $r>1$ is close to $1$ and prove the multilinear estimates formulated in chapter 4 by reduction to the bilinear estimates of chapter 5. In chapter 7 we prove these estimates in the classical case $r=2$ by reduction to bilinear estimates given by \cite{AFS}. Finally in chapter 8 we interpolate between the estimates for $r=2$ and $r=1+$ to obtain the desired local well-posedness result in the whole range $1 < r \le 2$ .

\section{Main results}
Expanding (\ref{1}) in terms of the gauge potentials $\left\{A_\alpha\right\}$, we obtain: 
 \begin{equation}\label{YM2}
  \square A_\beta = \partial_\beta\partial^\alpha A_\alpha- [\partial^\alpha A_\alpha, A_\beta] - [A^\alpha,\partial^\alpha A_\beta] - 
  [A^\alpha, F_{\alpha\beta}].
\end{equation}
If we now impose the Lorenz gauge condition,
the system \eqref{YM2} reduces to the nonlinear wave equation
 \begin{equation}\label{YM3}
  \square A_\beta = - [A^\alpha,\partial_\alpha A_\beta] -   [A^\alpha, F_{\alpha\beta}].
\end{equation}
In addition, regardless of the choice of gauge, $F$ satisfies the wave equation 
    \begin{equation}\label{YMF1}
 \begin{split}
   \square F_{\beta\gamma}&=-[A^\alpha,\partial_\alpha F_{\beta\gamma}] - \partial^\alpha[A_\alpha,F_{\beta\gamma}] - \left[A^\alpha,[A_\alpha,F_{\beta\gamma}]\right]
   \\
   & \quad - 2[F^{\alpha}_{{\;\;\,}\beta},F_{\gamma\alpha}] \, ,
   \end{split}
 \end{equation}
where we refer to \cite{ST}, chapter 3.2.

Expanding the second and fourth terms in \eqref{YMF1}, and also imposing the Lorenz gauge, yields 
\begin{equation}\label{YMF2}
\begin{split}
       \square F_{\beta\gamma}&= - 2[A^\alpha,\partial_\alpha F_{\beta\gamma}]
      + 2[\partial_\gamma A^\alpha, \partial_\alpha A_\beta]
      - 2[\partial_\beta A^\alpha, \partial_\alpha A_\gamma]
      \\
      &\quad + 2[\partial^\alpha A_\beta , \partial_\alpha A_\gamma]
                 + 2[\partial_\beta A^\alpha, \partial_\gamma A_\alpha] - [A^\alpha,[A_\alpha,F_{\beta\gamma}]] 
                 \\
                 &\quad  +2[F_{\alpha\beta},[A^\alpha,A_\gamma]]- 2[F_{\alpha\gamma},[A^\alpha,A_\beta]]
                      - 2[[A^\alpha,A_\beta],[A_\alpha,A_\gamma]] .  
                         \end{split}    
\end{equation}

 Note on the other hand by expanding the last term in the right hand side of \eqref{YM3}, we obtain 
 \begin{equation}\label{YM4}
  \square A_\beta = - 2[A^\alpha,\partial_\alpha A_\beta] + [A^\alpha,\partial_\beta A_\alpha] - 
  [A^\alpha, [A_\alpha,A_\beta]].
\end{equation}

We want to solve the system \eqref{YMF2}-\eqref{YM4} simultaneously for $A$ and $F$.
So to pose the Cauchy problem for this system, we consider initial data for $(A,F)$ at $t=0$:
\begin{equation}\label{Data-AF}
A(0) = a, \quad \partial_t A(0) = \dot a,
        \quad
    F(0) =f, \quad \partial_t F(0) = \dot f.
 \end{equation}

In fact, the initial data for $F$ can be determined from $(a, \dot a)$ as follows:
\begin{equation}\label{f}
\left\{
\begin{aligned}
  f_{ij} &= \partial_i a_j - \partial_j a_i + [a_i,a_j],
  \\
  f_{0i} &= \dot a_i - \partial_i a_0 + [a_0,a_i],
\\
  \dot f_{ij} &= \partial_i \dot a_j - \partial_j \dot a_i + [\dot a_i,a_j]+[ a_i, \dot a_j],
  \\
 \dot f_{0i} &= \partial^j f_{ji} +[a^\alpha, f_{\alpha i}] 
\end{aligned}
\right.
\end{equation}
where the first three expressions come from \eqref{curv} whereas 
the last one comes from (\ref{1}) with $\beta=i$.

Note that the Lorenz gauge condition $\partial^\alpha A_\alpha=0$ and (\ref{1}) with $\beta=0$ impose the constraints 
\begin{equation}\label{Const}
\dot a_0= \partial^i a_i,
\quad
   \partial^i f_{i0} + [a^i, f_{i0}] = 0 \, .
\end{equation}

Now we formulate our main theorem.
\begin{theorem}
\label{Theorem1.1}
Let $1 < r \le 2$ , $\epsilon > 0$. Assume that $s$ and $l$ satisfy the following conditions:
$$s = \frac{3}{2r} + \epsilon \, ,  \quad l =\frac{3}{2r}-1+ \epsilon \, . $$
Given initial data $(a,\dot{a}) \in \widehat{H}^{s,r} \times \widehat{H}^{s-1,r}$ , $(f,\dot{f}) \in \widehat{H}^{l,r} \times \widehat{H}^{l-1,r}$ ,  there exists a time $T > 0$ , $T=T(\|a\|_{\widehat{H}^{s,r}},\|\dot{a}\|_{\widehat{H}^{s-1,r}} , \|f\|_{\widehat{H}^{l,r}} , \|\dot{f}\|_{\widehat{H}^{l-1,r}})$, such that the Cauchy problem (\ref{YMF2}),(\ref{YM4}),(\ref{Data-AF}) has a unique solution $A_{\mu}\in X^r_{s,b,+}[0,T]+ X^r_{s,b,-}[0,T]$ , $F \in X^r_{l,b,+}[0,T]+ X^r_{l,b,-}[0,T]$ (these spaces are defined in Def. \ref{Def.}). Here $b = \frac{1}{r}+$ . This solution has the regularity
$$ A_{\mu} \in C^0([0,T],\widehat{H}^{s,r}) \cap C^1([0,T],\widehat{H}^{s-1,r}) \, , \, F \in C^0([0,T],\widehat{H}^{l,r}) \cap C^1([0,T],\widehat{H}^{l-1,r}) \, . $$
The solution depends continuously on the data and persistence of higher regularity holds.
\end{theorem}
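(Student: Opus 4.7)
The plan is to recast the coupled system \eqref{YMF2}--\eqref{YM4} as a first-order system via the half-wave factorization $\square = -(\partial_t-i|\nabla|)(\partial_t+i|\nabla|)$, writing $A_\mu = A_{\mu,+}+A_{\mu,-}$ and $F_{\beta\gamma}=F_{\beta\gamma,+}+F_{\beta\gamma,-}$ with components living in the restriction spaces $X^r_{s,b,\pm}[0,T]$ and $X^r_{l,b,\pm}[0,T]$ respectively. The abstract local well-posedness theorem of Chapter 3 for systems of nonlinear wave equations with Fourier-Lebesgue data then reduces existence, uniqueness, persistence of higher regularity, and continuous dependence to a finite list of multilinear estimates for the right-hand sides of \eqref{YMF2} and \eqref{YM4}, via a contraction mapping on a sufficiently small time interval $[0,T]$.

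Before deriving those estimates I would bring the nonlinearities to their final null form. In \eqref{YM4} the term $[A^\alpha,\partial_\alpha A_\beta]$ is a $Q_0$-type null form modulo the Lorenz condition, and $[A^\alpha,\partial_\beta A_\alpha]$ can be rewritten, as in \cite{ST} and \cite{T}, by using $\partial^\alpha A_\alpha=0$ together with the partial null structure exploited in \cite{T}. In \eqref{YMF2} the combinations $[\partial_\gamma A^\alpha,\partial_\alpha A_\beta]-[\partial_\beta A^\alpha,\partial_\alpha A_\gamma]$, $[\partial^\alpha A_\beta,\partial_\alpha A_\gamma]$ and $[\partial_\beta A^\alpha,\partial_\gamma A_\alpha]$ are genuine $Q_{\alpha\beta}$-type null forms, while $[A^\alpha,\partial_\alpha F_{\beta\gamma}]$ carries a $Q_0$ structure modulo the gauge. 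The remaining cubic and quartic contributions are treated by iterating bilinear estimates on products that have already been controlled.

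The resulting multilinear estimates are reduced to bilinear estimates in $H^r_{s,b}$ and $X^r_{s,b,\pm}$. For $r$ close to $1$, the requisite null form and generic product estimates are obtained via the Fourier-space machinery of Foschi-Klainerman \cite{FK} adapted to the Fourier-Lebesgue setting by Gr\"unrock \cite{G}, Grigoryan-Nahmod \cite{GN} and Grigoryan-Tanguay \cite{GT}. For $r=2$ one uses the sharp $X_{s,b}$ null form and product estimates underlying \cite{AFS}. A complex interpolation between the $r=2$ and $r=1+$ endpoints then yields the estimates throughout the full range $1<r\le 2$, and feeding them into the contraction argument completes the proof.

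The main obstacle I expect is the quadratic terms in \eqref{YMF2} at the threshold $l=\tfrac{3}{2r}-1+\epsilon$: in $2+1$ dimensions the Foschi-Klainerman angular decay is only borderline logarithmic, and several high-high interactions of the form $[\partial A,\partial A]$ measured at the weaker $F$-regularity sit exactly at the edge, forcing simultaneous use of the $Q_{\alpha\beta}$ null gain and the Fourier-Lebesgue improvement. A secondary difficulty is the cubic term $[A^\alpha,[A_\alpha,F_{\beta\gamma}]]$, where the iteration of two bilinear estimates must not squander the regularity gained in the inner one; this pressure is what ultimately dictates the choice $b=\tfrac{1}{r}+$.
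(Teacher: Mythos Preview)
Your proposal is correct and follows essentially the same route as the paper: first-order reformulation, reduction via the abstract local well-posedness theorem (Theorem~\ref{Theorem0.3}) to the multilinear estimates (\ref{21})--(\ref{37}), proof of those estimates at the two endpoints $r=1+$ (Chapter~6, via Foschi--Klainerman/Gr\"unrock/Grigoryan--Nahmod/Grigoryan--Tanguay) and $r=2$ (Chapter~7, via \cite{AFS}), and finally multilinear interpolation across $1<r\le 2$. One small correction: your identification of the specific null forms is slightly off --- the terms $[A^\alpha,\partial_\alpha\phi]$ are handled by the $\mathcal Q$-form (a combination of $Q_{12}$ and $Q_{0i}$, see Lemma~\ref{Lemma-Null0}) rather than $Q_0$, and $[\partial^\alpha A_\beta,\partial_\alpha A_\gamma]$ is $Q_0$ rather than $Q_{\alpha\beta}$ --- but this does not affect the strategy.
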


\begin{Cor}
\label{Cor}
Let $s,r$ fulfill the assumptions of Theorem \ref{Theorem1.1}. Moreover assume that the initial data fulfill (\ref{f}) and (\ref{Const}). Given any $(a,\dot{a}) \in \widehat{H}^{s,r} \times \widehat{H}^{s-1,r}$ , there exists a time $T=T(\|a\|_{\widehat{H}^{s,r}},\|\dot{a}\|_{\widehat{H}^{s-1,r}} , \|f\|_{\widehat{H}^{l,r}} , \|\dot{f}\|_{\widehat{H}^{l-1,r}})$ , such that the solution $(A,F)$ of Theorem \ref{Theorem1.1} satisfies the Yang-Mills system (\ref{curv}),(\ref{1}) with Cauchy data $(a,\dot{a})$ and the Lorenz gauge condition $\partial^{\alpha} A_{\alpha} =0$ .
\end{Cor}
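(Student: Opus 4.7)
The plan is a standard constraint-propagation argument. Theorem \ref{Theorem1.1} yields a solution $(A,F)$ of the reduced system (\ref{YMF2}), (\ref{YM4}); what the Corollary requires is that $\partial^\alpha A_\alpha \equiv 0$ and that $F$ coincides with the genuine curvature of $A$, for then (\ref{YM4}) collapses to (\ref{YM3}), which, read with the gauge condition reinstated, is exactly $D^\alpha F_{\alpha\beta} = 0$ for every $\beta$. Accordingly I would introduce the \emph{gauge defect} $\Lambda := \partial^\alpha A_\alpha$ and the \emph{curvature defect} $W_{\alpha\beta} := \partial_\alpha A_\beta - \partial_\beta A_\alpha + [A_\alpha,A_\beta] - F_{\alpha\beta}$, and prove $\Lambda \equiv 0$, $W \equiv 0$ on $[0,T]$.

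The initial data for $(\Lambda, W)$ must be shown to vanish. Since $\partial^0 = -\partial_t$, one has $\Lambda(0) = -\dot a_0 + \partial^i a_i = 0$ by the first constraint in (\ref{Const}), while $W_{\alpha\beta}(0) = 0$ and $\partial_t W_{ij}(0) = 0$ are immediate from the first three lines of (\ref{f}). To handle $\partial_t \Lambda(0)$ and $\partial_t W_{0i}(0)$ I would substitute (\ref{YM4}) evaluated at $t=0$ (for $\beta = 0$ and $\beta = i$ respectively) in order to express the second time derivatives $\partial_t^2 A_0(0)$ and $\partial_t^2 A_i(0)$ in terms of spatial derivatives of the Cauchy data; the Gauss-law constraint (the second equation in (\ref{Const})) together with the formula for $\dot f_{0i}$ in (\ref{f}) then forces these two quantities to vanish after algebraic cancellation.

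Next I would derive a closed linear system
\begin{align*}
\square \Lambda \;=\; \mathcal{L}_1(\Lambda, W, \partial\Lambda, \partial W), \qquad \square W_{\alpha\beta} \;=\; \mathcal{L}_2(\Lambda, W, \partial\Lambda, \partial W),
\end{align*}
whose right-hand sides are \emph{linear} in $(\Lambda, W, \partial\Lambda, \partial W)$ with coefficients polynomial in the already-known $A$, $F$, $\partial A$. The equation for $\Lambda$ is obtained by applying $\partial^\beta$ to (\ref{YM4}) and using the identity $\partial^\beta \partial_\alpha A_\beta = \partial_\alpha \Lambda$; the equation for $W$ comes from computing $\square \tilde F_{\alpha\beta}$ with $\tilde F_{\alpha\beta} := \partial_\alpha A_\beta - \partial_\beta A_\alpha + [A_\alpha,A_\beta]$ and subtracting (\ref{YMF2}), the purely $A$-dependent terms cancelling by construction, since (\ref{YMF2}) was derived under the hypotheses $F = \tilde F$ and $\Lambda = 0$. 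This computation is carried out in \cite{ST} for $n=3$ and transfers verbatim to $n=2$, as it is purely algebraic. With zero Cauchy data, the contraction-mapping scheme of Chapter 3 applied to this \emph{linearized} system in $X^r_{s,b,\pm}$ and $X^r_{l,b,\pm}$ gives $\Lambda \equiv 0$, $W \equiv 0$ on a small interval, and iteration extends this to all of $[0,T]$.

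The main obstacle is the last step: one must confirm that every product appearing in $\mathcal{L}_1$ and $\mathcal{L}_2$ can be controlled by the bi-, tri- and quadrilinear estimates of Chapters 5--8 when one factor is the unknown $(\Lambda, W)$ and the remaining factors are frozen at the solution $(A, F)$. Because the estimates needed for the original nonlinear iteration are strictly stronger than those needed to close a linear inhomogeneous system with the same structural form, this amounts to routine bookkeeping of regularities and does not require any new multilinear estimate beyond those already developed in the paper.
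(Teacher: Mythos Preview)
Your proposal is correct and follows essentially the same route as the paper. The paper's own proof is extremely terse: it first observes that $(f,\dot f)$ defined by (\ref{f}) belongs to $\widehat H^{l,r}\times\widehat H^{l-1,r}$ (a point you take for granted but should check, since $l=s-1$ makes this nontrivial for the bilinear terms in (\ref{f})), so that Theorem~\ref{Theorem1.1} applies, and then simply cites \cite{ST}, Remark~2 for the constraint-propagation step. What you have written is precisely a sketch of that constraint-propagation argument from \cite{ST}: introduce the gauge and curvature defects, verify their Cauchy data vanish via (\ref{f}) and (\ref{Const}), derive a closed linear wave system for them, and conclude by uniqueness. The only minor difference is that \cite{ST} typically runs this argument first for smooth data (where the wave system is classical) and then passes to rough data by the persistence of higher regularity and continuous dependence established in Theorem~\ref{Theorem1.1}; your version closes the linear system directly in the $X^r_{s,b,\pm}$ scale, which is also fine and, as you note, needs no new multilinear estimate.
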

\begin{proof}[Proof of the Corollary]
 If $(a,\dot{a}) \in \widehat{H}^{s,r} \times \widehat{H}^{s-1,r}$ , then $(f,\dot{f})$ , defined by (\ref{f}), fulfill $(f,\dot{f}) \in \widehat{H}^{l,r} \times \widehat{H}^{l-1,r}$, as one easily checks. Thus we may apply Theorem \ref{Theorem1.1}.
The solution $(A,F)$ does not necessarily fulfill the Lorenz gauge condition and (\ref{curv}), i.e. $F=F[A]$ . If however the conditions (\ref{f}) and (\ref{Const}) are assumed then these properties are satisfied and $(A,F)$ is a solution of the Yang-Mills system (\ref{curv}),(\ref{1}) with Cauchy data $(a,\dot{a})$. This was shown in \cite{ST}, Remark 2. 
\end{proof}

Let us also formulate the result in the special case $r=2$ .

\begin{theorem}
	\label{Theorem1.3}
	Let  $\epsilon > 0$. Assume that $s$ and $l$ satisfy the following conditions:
	$s>\frac{3}{4}$ , $l>-\frac{1}{4}$ , $s\ge l\ge s-1$ , $2s-l > \frac{5}{4}$ , $4s-l > 3$ , $ 3s-2l > \frac{3}{2}$ and $ 2l-s > -\frac{5}{4}$ . 
	Given initial data $(a,\dot{a}) \in H^s \times H^{s-1}$ , $(f,\dot{f}) \in H^l \times H^{l-1} $ ,  there exists a time $T > 0$ , $T=T(\|a\|_{H^s},\|\dot{a}\|_{H^{s-1}} , \|f\|_{H^l}, \|\dot{f}\|_{H^{l-1}})$, such that the Cauchy problem (\ref{YMF2}),(\ref{YM4}),(\ref{Data-AF}) has a unique solution $A_{\mu}\in X^{s,b}_+[0,T]+ X^{s,b}_-[0,T]$ , $F \in X^{l,b}_+[0,T]+ X^{l,b}_-[0,T]$ (these spaces are defined in Def. \ref{Def.}). Here $b = \frac{1}{2}+$ . This solution has the regularity
	$$ A_{\mu} \in C^0([0,T],H^s) \cap C^1([0,T],H^{s-1}) \, , \, F \in C^0([0,T],H^l) \cap C^1([0,T],H^{l-1}) \, . $$
	The solution depends continuously on the data and persistence of higher regularity holds.
\end{theorem}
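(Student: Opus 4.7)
The plan is to run a standard Picard iteration for the coupled system \eqref{YMF2}--\eqref{YM4} in the Bourgain--Klainerman--Machedon spaces $X^{s,b}_\pm$ and $X^{l,b}_\pm$ with $b=\frac{1}{2}+$, treating $A$ and $F$ as independent unknowns. First I would decompose each wave into its two half-wave components $A=A_++A_-$, $F=F_++F_-$ (so that the linear parts are governed by $\partial_t\mp i|\nabla|$), and convert \eqref{YMF2}--\eqref{YM4} together with the Cauchy data \eqref{Data-AF} into a system of Duhamel integral equations. By the general local well-posedness theorem for systems of nonlinear wave equations with data in $H^s$-type spaces recalled in Chapter 3, the contraction mapping argument for this fixed-point problem reduces to proving a finite collection of bilinear, trilinear and quadrilinear estimates for the various terms on the right-hand sides of \eqref{YMF2} and \eqref{YM4}.

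Next, to make the nonlinearity amenable to these estimates I would exploit null structure. Following Selberg--Tesfahun \cite{ST} and Tesfahun \cite{T}, the quadratic combinations $[A^\alpha,\partial_\alpha A_\beta]$ and the terms of the form $[\partial_\gamma A^\alpha,\partial_\alpha A_\beta]$ occurring in \eqref{YMF2}--\eqref{YM4} can be rewritten, using the Lorenz gauge $\partial^\alpha A_\alpha=0$, as a sum of classical null forms $Q_{\alpha\beta}(u,v)=\partial_\alpha u\,\partial_\beta v-\partial_\beta u\,\partial_\alpha v$ plus harmless remainders; the term $[A^\alpha,\partial_\alpha F_{\beta\gamma}]$, which admits no obvious null structure of this type, is handled through Tesfahun's partial null structure. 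This rewriting produces the final form of the nonlinearity whose multilinear estimates are collected in Chapter 4.

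The heart of the proof, and what I expect to be the principal obstacle, is verifying these multilinear estimates in the classical setting $r=2$. For the bilinear null-form pieces I would invoke the sharp $X^{s,b}$-bounds for $Q_{\alpha\beta}$ in two space dimensions of Andersson--Foschi--Selberg \cite{AFS}, supplemented by standard product estimates in $X^{s,b}_\pm$ for the non-null quadratic terms. The cubic and quartic contributions would be handled by fixing the sign decomposition of each factor, applying a bilinear estimate to two of the factors to reduce to a mixed $L^p_tL^q_x$ norm, and then bounding the remaining factor(s) with Strichartz estimates and Sobolev embedding. The numerical conditions in the statement, namely $s>\frac{3}{4}$, $l>-\frac{1}{4}$, $s\ge l\ge s-1$, $2s-l>\frac{5}{4}$, $4s-l>3$, $3s-2l>\frac{3}{2}$ and $2l-s>-\frac{5}{4}$, are precisely the scaling and compatibility constraints needed to keep each invocation of an \cite{AFS} bilinear bound within its admissible range; I would go through the multilinear estimates one by one and check that each inequality is consumed by at least one of them, leaving a genuine $\epsilon$-gain as the small power of $T$ that drives the contraction.

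Finally, once all multilinear estimates are in place, a standard fixed-point argument yields a unique solution $(A,F)\in (X^{s,b}_+[0,T]+X^{s,b}_-[0,T])\times (X^{l,b}_+[0,T]+X^{l,b}_-[0,T])$ on a short time interval $[0,T]$, with $T$ depending only on the norms of the data. Continuous dependence on the data follows from the difference estimate produced as a byproduct of the contraction, persistence of higher regularity is routine, and the embedding $X^{s,b}_\pm[0,T]\hookrightarrow C^0([0,T],H^s)\cap C^1([0,T],H^{s-1})$ valid for $b>\frac{1}{2}$ yields the asserted temporal regularity of $A$ and $F$.
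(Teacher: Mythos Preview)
Your high-level plan coincides with the paper's: half-wave decomposition, reduction via the general local well-posedness theorem of Chapter~3 to the list of multilinear estimates in Chapter~4, and then case-by-case verification at $r=2$. Two points of execution differ from what the paper actually does, and one of them matters.

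First, a minor misattribution: Tesfahun's additional null structure is not for $[A^\alpha,\partial_\alpha F_{\beta\gamma}]$ (that term is handled by the standard $\mathcal Q$-null form of Lemma~\ref{Lemma-Null0}) but for $[A^\alpha,\partial_\beta A_\alpha]$, specifically the piece $\Gamma^1_\beta$ in Lemma~\ref{Lemma-null1} and Lemma~\ref{Lemma2.1}. Also, \cite{AFS} (d'Ancona--Foschi--Selberg) does not supply null-form estimates directly; it supplies the product estimates of Proposition~\ref{Prop.7.1}. The paper first bounds the symbols of $Q_0,Q_{0i},Q_{12}$ by the angle (Lemma~\ref{Lemma1}), then controls the angle via hyperbolic weights (Lemma~\ref{Lemma2}, from \cite{AFS1}), and only then applies \cite{AFS}. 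This reduction to several product sub-estimates is where $2s-l>\frac{5}{4}$, $4s-l>3$, $3s-2l>\frac{3}{2}$ arise.

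Second, and more substantively, the paper does not treat the cubic and quartic terms by passing to mixed $L^p_tL^q_x$ norms and Strichartz. In two space dimensions the Strichartz range is narrow ($q\ge 4$, no $L^2_t$ endpoint), and it is not clear your route closes at the threshold $s>\frac{3}{4}$. Instead the paper stays entirely within the wave-Sobolev framework and nests Proposition~\ref{Prop.7.1}: for instance $\|uvw\|_{H^{s-1,-\frac12+}}\lesssim\|u\|_{H^{s,\frac12+}}\|vw\|_{H^{s-\frac12,0}}$ followed by a second bilinear application to $vw$. If you replace your Strichartz step by this nesting, your sketch matches the paper's proof.
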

\begin{Cor}
	\label{Cor3}
	Let $s,r$ fulfill the assumptions of Theorem \ref{Theorem1.3}. Moreover assume that the initial data fulfill (\ref{f}) and (\ref{Const}). Given any $(a,\dot{a}) \in H^s \times H^{s-1}$ , there exists a time $T=T(\|a\|_{H^s},\|\dot{a}\|_{H^{s-1}} , \|f\|_{H^l} , \|\dot{f}\|_{H^{l-1}})$ , such that the solution $(A,F)$ of Theorem \ref{Theorem1.3} satisfies the Yang-Mills system (\ref{curv}),(\ref{1}) with Cauchy data $(a,\dot{a})$ and the Lorenz gauge condition $\partial^{\alpha} A_{\alpha} =0$ .
\end{Cor}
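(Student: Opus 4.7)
The proof plan follows the structure of Corollary \ref{Cor} almost verbatim, the only novelty being that the Gauss-and-gauge propagation argument of \cite{ST}, Remark 2 must be verified at the low regularity of Theorem \ref{Theorem1.3}. I would proceed in three steps.

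\textbf{Step 1 (applicability of Theorem \ref{Theorem1.3}).} First I would check that if $(a,\dot a)\in H^{s}\times H^{s-1}$ and $(f,\dot f)$ is defined by \eqref{f}, then $(f,\dot f)\in H^{l}\times H^{l-1}$. The linear contributions $\partial_i a_j$, $\dot a_i$, $\partial_i\dot a_j$ and $\partial^j f_{ji}$ land in the required spaces thanks to the side condition $l\ge s-1$; the quadratic and cubic pieces such as $[a_i,a_j]$, $[a_i,\dot a_j]$ and $[a^\alpha,f_{\alpha i}]$ are controlled by the 2D Sobolev product rules. The inequalities $2s-l>\tfrac{5}{4}$, $4s-l>3$, $3s-2l>\tfrac{3}{2}$ and $2l-s>-\tfrac{5}{4}$ assumed in Theorem \ref{Theorem1.3} are precisely the conditions needed to carry out these products, so no new estimate is required. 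Theorem \ref{Theorem1.3} then supplies the solution $(A,F)$.

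\textbf{Step 2 (defect quantities).} Next I would introduce
\[
  \Lambda:=\partial^{\alpha}A_{\alpha},\qquad
  G_{\alpha\beta}:=F_{\alpha\beta}-\partial_{\alpha}A_{\beta}+\partial_{\beta}A_{\alpha}-[A_{\alpha},A_{\beta}],
\]
so that the Lorenz gauge amounts to $\Lambda\equiv 0$ and the curvature identity \eqref{curv} to $G\equiv 0$. Applying $\partial^{\beta}$ to \eqref{YM4} and subtracting $\square\bigl(\partial_{\alpha}A_{\beta}-\partial_{\beta}A_{\alpha}+[A_{\alpha},A_{\beta}]\bigr)$ (again computed from \eqref{YM4}) from \eqref{YMF2}, a direct calculation yields a closed system
\[
  \square\Lambda=\mathcal{L}_{1}(A,F;\Lambda,G),\qquad
  \square G_{\beta\gamma}=\mathcal{L}_{2}(A,F;\Lambda,G),
\]
in which $\mathcal{L}_{1}$, $\mathcal{L}_{2}$ are \emph{linear} in $(\Lambda,G)$ with coefficients polynomial in $(A,\partial A,F)$. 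All other contributions cancel by the algebraic identities behind the Bianchi identity, exactly as in \cite{ST}, Remark 2; none of this manipulation requires more regularity than the multilinear estimates of chapter 7 already furnish.

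\textbf{Step 3 (vanishing Cauchy data and uniqueness).} I would then verify that $(\Lambda,G)$ has zero Cauchy data. The first equation in \eqref{Const} forces $\Lambda(0)=-\dot a_{0}+\partial^{i}a_{i}=0$, while the first three lines of \eqref{f} give $G_{ij}(0)=G_{0i}(0)=\partial_{t}G_{ij}(0)=0$. Using \eqref{YM4} at $t=0$ to replace $\ddot a_{\beta}(0)$ in terms of $(a,\dot a)$, the fourth line of \eqref{f} combined with the second Gauss constraint in \eqref{Const} yields $\partial_{t}G_{0i}(0)=0$ and $\partial_{t}\Lambda(0)=0$. Hence $(\Lambda,G)$ solves a homogeneous linear wave system with zero Cauchy data, and uniqueness in $X^{s,b}_{\pm}\times X^{l,b}_{\pm}$ forces $\Lambda\equiv 0$ and $G\equiv 0$. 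Consequently $A$ is in the Lorenz gauge, $F=F[A]$, and \eqref{YM3} collapses to $D^{\alpha}F_{\alpha\beta}=0$ as required. The principal obstacle is this last uniqueness step at the borderline regularity of Theorem \ref{Theorem1.3}, but it reduces once more to the same multilinear estimates already proved for the nonlinear problem, so nothing new has to be done.
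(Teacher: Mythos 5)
Your proposal is correct and follows essentially the same route as the paper: one checks that \eqref{f} maps $(a,\dot a)\in H^{s}\times H^{s-1}$ into $(f,\dot f)\in H^{l}\times H^{l-1}$, applies Theorem \ref{Theorem1.3}, and then propagates the constraints exactly as in the proof of Corollary \ref{Cor}, for which the paper simply invokes \cite{ST}, Remark 2 — the defect-quantity argument you spell out in Steps 2 and 3 is precisely the content of that remark. (Minor point: avoid denoting the gauge defect by $\Lambda$, since the paper already reserves $\Lambda$ for the multiplier with symbol $\langle\xi\rangle$.)
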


Let us fix some notation.
We denote the Fourier transform with respect to space and time  by $\,\,\widehat{}\,$ . 
 $\Box = \partial_t^2 - \Delta$ is the d'Alembert operator,
$a\pm := a \pm \epsilon$ for a sufficiently small $\epsilon >0$ , and $\langle \,\cdot\, \rangle := (1+|\cdot|^2)^{\frac{1}{2}}$ . \\
Let $\Lambda^{\alpha}$
be the multiplier with symbol  $
\langle\xi \rangle^\alpha $ . Similarly let $D^{\alpha}$,
and $D_{-}^{\alpha}$ be the multipliers with symbols $
\abs{\xi}^\alpha$ and $\quad ||\tau|-|\xi||^\alpha$ ,
respectively.

\begin{Def}
\label{Def.}
Let $1\le r\le 2$ , $s,b \in \R$ . The wave-Sobolev spaces $H^r_{s,b}$ are the completion of the Schwarz space ${\mathcal S}(\R^{1+3})$ with norm
$$ \|u\|_{H^r_{s,b}} = \| \langle \xi \rangle^s \langle  |\tau| - |\xi| \rangle^b \widehat{u}(\tau,\xi) \|_{L^{r'}_{\tau \xi}} \, , $$ 
where $r'$ is the dual exponent to $r$.
We also define $H^r_{s,b}[0,T]$ as the space of the restrictions of functions in $H^r_{s,b}$ to $[0,T] \times \mathbb{R}^3$.  Similarly we define $X^r_{s,b,\pm} $ with norm  $$ \|\phi\|_{X^r_{s,b\pm}} := \| \langle \xi \rangle^s \langle \tau \pm |\xi| \rangle^b \tilde{\phi}(\tau,\xi)\|_{L^{r'}_{\tau \xi}} $$ and $X^r_{s,b,\pm}[0,T] $ .\\
In the case $r=2$ we denote $H^2_{s,b} = : H^{s,b}$ and similarly $X^2_{s,b,\pm} = : X^{s,b}_{\pm}$ .
\end{Def}

\section{Preliminaries}

We start by collecting some fundamental properties of the solution spaces. We rely on \cite{G}. The spaces $X^r_{s,b,\pm} $ with norm  $$ \|\phi\|_{X^r_{s,b\pm}} := \| \langle \xi \rangle^s \langle \tau \pm |\xi| \rangle^b \tilde{\phi}(\tau,\xi)\|_{L^{r'}_{\tau \xi}} $$ for $1<r<\infty$ are Banach spaces with ${\mathcal S}$ as a dense subspace. The dual space is $X^{r'}_{-s,-b,\pm}$ , where $\frac{1}{r} + \frac{1}{r'} = 1$. The complex interpolation space is given by
$$(X^{r_0}_{s_0,b_0,\pm} , X^{r_1}_{s_1,b_1,\pm})_{[\theta]} = X^r_{s,b,\pm} \, , $$
where $s=(1-\theta)s_0+\theta s_1$, $\frac{1}{r} = \frac{1-\theta}{r_0} + \frac{\theta}{r_1}$ , $b=(1-\theta)b_0 + \theta b_1$ . Similar properties has the space $H^r_{s,b}$ .\\
If $u=u_++u_-$, where $u_{\pm} \in X^r_{s,b,\pm} [0,T]$ , then $u \in C^0([0,T],\hat{H}^{s,r})$ , if $b > \frac{1}{r}$ .

The "transfer principle" in the following proposition, which is well-known in the case $r=2$, also holds for general $1<r<\infty$ (cf. \cite{GN}, Prop. A.2 or \cite{G}, Lemma 1). We denote $ \|u\|_{\hat{L}^p_t(\hat{L}^q_x)} := \|\tilde{u}\|_{L^{p'}_{\tau} (L^{q'}_{\xi})}$ .
\begin{prop}
\label{Prop.0.1}
Let $1 \le p,q \le \infty$ .
Assume that $T$ is a bilinear operator which fulfills
$$ \|T(e^{\pm_1 itD} f_1, e^{\pm_2itD} f_2)\|_{\hat{L}^p_t(\hat{L}^q_x)} \lesssim \|f_1\|_{\hat{H}^{s_1,r}} \|f_2\|_{\hat{H}^{s_2,r}}$$
for all combinations of signs $\pm_1,\pm_2$ , then for $b > \frac{1}{r}$ the following estimate holds:
$$ \|T(u_1,u_2)\|_{\hat{L}^p_t(\hat{L}^q_x)} \lesssim \|u_1\|_{H^r_{s_1,b}}  \|u_2\|_{H^r_{s_2,b}} \, . $$
\end{prop}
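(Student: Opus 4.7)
The plan is to realize each $u_j \in H^r_{s_j,b}$ as a continuous superposition of time-modulated free waves parametrized by the modulation variable $\lambda$, to apply the assumed bilinear estimate on each slice, and then to close by H\"older's inequality in $\lambda$, where the condition $b > 1/r$ will provide precisely the required weight integrability. The proof is essentially the standard Fourier-Lebesgue variant of the classical $X^{s,b}$ transfer principle.

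First I would split each $u_j = u_j^+ + u_j^-$ by restricting its space-time Fourier support to $\{\pm\tau \geq 0\}$, so that on the respective pieces $||\tau|-|\xi|| = |\tau \mp_j|\xi||$ and hence
\begin{equation*}
\|u_j^{\pm_j}\|_{X^r_{s_j,b,\mp_j}} = \|u_j^{\pm_j}\|_{H^r_{s_j,b}} \leq \|u_j\|_{H^r_{s_j,b}}.
\end{equation*}
By bilinearity of $T$ it suffices to bound each of the four pieces $T(u_1^{\pm_1}, u_2^{\pm_2})$. For fixed signs, the change of variables $\tau = \pm_j|\xi| + \lambda$ yields the representation
\begin{equation*}
u_j^{\pm_j}(t,x) = \int_{\mathbb{R}} e^{i\lambda t}\bigl(e^{\pm_j itD} g_{j,\lambda}\bigr)(x)\, d\lambda, \qquad \widehat{g_{j,\lambda}}(\xi) = \widetilde{u_j^{\pm_j}}(\pm_j|\xi|+\lambda,\xi),
\end{equation*}
which expresses $u_j^{\pm_j}$ as an integral of modulated free waves.

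Next I would substitute this into $T$. Since $T$ is bilinear and the factors $e^{i\lambda_j t}$ are scalar functions of $t$ alone (so they commute with the space-time operator $T$, as is the case for all pointwise products and null forms encountered in the paper), one obtains
\begin{equation*}
T(u_1^{\pm_1}, u_2^{\pm_2})(t,x) = \iint_{\mathbb{R}^2} e^{i(\lambda_1+\lambda_2)t}\, T\bigl(e^{\pm_1 itD}g_{1,\lambda_1},\, e^{\pm_2 itD}g_{2,\lambda_2}\bigr)(t,x)\, d\lambda_1\, d\lambda_2.
\end{equation*}
Multiplication by the unimodular factor $e^{i(\lambda_1+\lambda_2)t}$ merely translates the time-Fourier variable and therefore preserves $\hat L^p_t(\hat L^q_x)$, so Minkowski's inequality combined with the hypothesis yields
\begin{equation*}
\|T(u_1^{\pm_1}, u_2^{\pm_2})\|_{\hat L^p_t(\hat L^q_x)} \lesssim \prod_{j=1}^{2} \int_{\mathbb{R}} \|g_{j,\lambda}\|_{\hat H^{s_j,r}}\, d\lambda.
\end{equation*}

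Finally, for each $j$ I would insert the weight $\langle\lambda\rangle^{-b}\langle\lambda\rangle^{b}$ and apply H\"older's inequality with dual exponents $r$ and $r'$:
\begin{equation*}
\int_{\mathbb{R}} \|g_{j,\lambda}\|_{\hat H^{s_j,r}}\, d\lambda \leq \left(\int \langle\lambda\rangle^{-br}\, d\lambda\right)^{1/r}\left(\int \langle\lambda\rangle^{br'}\|g_{j,\lambda}\|_{\hat H^{s_j,r}}^{r'}\, d\lambda\right)^{1/r'}.
\end{equation*}
The first factor is finite precisely when $br > 1$, i.e.\ when $b > 1/r$; this is the sole place where the hypothesis is used and explains the sharpness of the exponent. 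Reversing the substitution $\lambda \mapsto \tau = \pm_j|\xi|+\lambda$ inside the second factor gives exactly $\|u_j^{\pm_j}\|_{X^r_{s_j,b,\mp_j}} \leq \|u_j\|_{H^r_{s_j,b}}$, completing the argument. There is no genuine obstacle; the only mild technical point is justifying the Fubini/interchange in the free-wave representation, which one first verifies for Schwartz $u_j$ and then extends by the density of $\mathcal{S}$ in $H^r_{s_j,b}$ recalled just above.
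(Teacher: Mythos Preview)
The paper does not give its own proof of this proposition; it merely cites \cite{GN}, Prop.~A.2 and \cite{G}, Lemma~1. Your argument is correct and is precisely the standard Fourier--Lebesgue transfer-principle proof found in those references: decompose into modulated free waves, apply the free-solution estimate slice by slice, and close with H\"older in the modulation variable using $b>1/r$.
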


The general local well-posedness theorem is the following (obvious generalization of)  \cite{G}, Thm. 1.
\begin{theorem}
\label{Theorem0.3}
Let $N_{\pm}(u,v):=N_{\pm}(u_+,u_-,v_+,v_-)$ and $M_{\pm}(u,v):=M_{\pm}(u_+,u_-,\\v_+,v_-)$ be multilinear functions.
Assume that for given $s,l \in \R$, $1 < r < \infty$ there exist $ b,a > \frac{1}{r}$ such that the estimates
$$ \|N_{\pm}(u,v)\|_{X^r_{s,b-1+,\pm}} \le \omega_1( \|u\|_{X^r_{s,b}},\|v\|_{X^r_{l,a}}) $$
and 
$$\|M_{\pm}(u,v)\|_{X^r_{l,a-1+,\pm}} \le \omega_2( \|u\|_{X^r_{s,b}},\|v\|_{X^r_{l,a}}) $$
are valid with nondecreasing functions $\omega_j$ , where $\|u\|_{X^r_{s,b}} := \|u_-\|_{X^r_{s,b,-}} + \|u_+\|_{X^r_{s,b,+}}$. Then there exist $T=T(\|u_{0_ {\pm}}\|_{\hat{H}^{s,r}},\|v_{0_{\pm}}\|_{\hat{H}^{l,r}})$ $>0$ and a unique solution $(u_+,u_-,\\v_+,v_-) \in X^r_{s,b,+}[0,T] \times X^r_{s,b,-}[0,T] \times X^r_{l,a,+}[0,T] \times X^r_{l,a,-}[0,T] $ of the Cauchy problem
$$ \partial_t u_{\pm} \pm i\Lambda u = N_{\pm}(u,v) \quad , \quad \partial_t v_{\pm} \pm i\Lambda v = M_{\pm}(u,v) $$ $$         u_{\pm}(0) = u_{0_{\pm}} \in \hat{H}^{s,r} \quad , \quad v_{\pm}(0) = v_{0_{\pm}} \in \hat{H}^{l,r}       \, . $$
 This solution is persistent and the mapping data upon solution $(u_{0+},u_{0-},v_{0+},v_{0-}) \\ \mapsto (u_+,u_-,v_+,v_-)$ , $\hat{H}^{s,r} \times \hat{H}^{s,r}\times \hat{H}^{l,r} \times \hat{H}^{l,r} \to X^r_{s,b,+}[0,T_0] \times X^r_{s,b,-}[0,T_0] \times X^r_{l,a,+}[0,T_0]\times X^r_{l,a,-}[0,T_0] $ is locally Lipschitz continuous for any $T_0 < T$.
\end{theorem}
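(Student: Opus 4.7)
The plan is a Picard iteration/contraction-mapping argument in the product space $E_T := X^r_{s,b,+}[0,T] \times X^r_{s,b,-}[0,T] \times X^r_{l,a,+}[0,T] \times X^r_{l,a,-}[0,T]$, modeled on Gr\"unrock's proof of Thm.~1 in \cite{G} for the single-equation case. First I recast the Cauchy problem via Duhamel: for each sign,
$$u_\pm(t) = e^{\mp it\Lambda} u_{0\pm} + \int_0^t e^{\mp i(t-\tau)\Lambda} N_\pm(u,v)(\tau)\, d\tau,$$
and analogously for $v_\pm$ with $M_\pm$; call the resulting map $\Phi$, viewed as a self-map of a suitable ball in $E_T$.

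The linear ingredients I need are the two standard bounds for the restriction spaces $X^r_{s,b,\pm}[0,T]$, valid whenever $b>\frac{1}{r}$: the free-solution estimate $\|e^{\mp it\Lambda} u_{0\pm}\|_{X^r_{s,b,\pm}[0,T]} \lesssim \|u_{0\pm}\|_{\hat{H}^{s,r}}$, and the inhomogeneous energy inequality
$$\left\|\int_0^t e^{\mp i(t-\tau)\Lambda} F(\tau)\, d\tau\right\|_{X^r_{s,b,\pm}[0,T]} \lesssim T^\theta \|F\|_{X^r_{s,b-1+,\pm}[0,T]}$$
for some small $\theta>0$ controlled by the gap $b-\frac{1}{r}>0$. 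Both are Fourier-Lebesgue analogues of the classical $X^{s,b}$ bounds and are established in \cite{G}. Combining them with the hypothesized multilinear estimates on $N_\pm$ and $M_\pm$ yields
$$\|\Phi(u,v)\|_{E_T} \le C_0 D_0 + CT^\theta \bigl(\omega_1(\|u\|_{X^r_{s,b}},\|v\|_{X^r_{l,a}}) + \omega_2(\|u\|_{X^r_{s,b}},\|v\|_{X^r_{l,a}})\bigr),$$
where $D_0 := \sum_\pm (\|u_{0\pm}\|_{\hat{H}^{s,r}} + \|v_{0\pm}\|_{\hat{H}^{l,r}})$. Fixing the ball of radius $R:=2C_0 D_0$ in $E_T$ and choosing $T$ small enough depending on $R$, the monotonicity of $\omega_1,\omega_2$ ensures that $\Phi$ stabilizes this ball.

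For the contraction property I exploit multilinearity: the difference $N_\pm(u,v) - N_\pm(u',v')$ expands as a telescoping sum in which each summand has exactly one factor drawn from $u-u'$ or $v-v'$ while the remaining factors lie in the ball; reapplying the same multilinear estimates to each summand, together with the $T^\theta$ gain, produces a Lipschitz constant strictly less than $1$ for $T$ small enough. Banach's fixed-point theorem then delivers a unique solution $(u_+,u_-,v_+,v_-)\in E_T$; local Lipschitz continuity of the data-to-solution map on any $[0,T_0]\subset[0,T)$ follows from the same difference estimate, and persistence of higher regularity is obtained by re-running the iteration in the higher-regularity space (the solution found there must coincide with the one already built, by uniqueness in the lower-regularity ball). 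The only step that is not entirely routine is the $T^\theta$-gain Duhamel estimate in the Fourier-Lebesgue setting, which I take as a black box from \cite{G}; given that, the Picard scheme closes in the standard way and the system structure does not introduce any genuinely new difficulty beyond bookkeeping of the four sign-components.
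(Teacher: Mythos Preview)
Your sketch is correct and matches the paper's treatment: the paper does not give a self-contained proof of this theorem but simply states it as an ``obvious generalization of'' \cite{G}, Thm.~1, so the argument is precisely the Picard iteration/contraction-mapping scheme in the Fourier--Lebesgue $X^r_{s,b,\pm}$ spaces that you outline, with the free-evolution bound and the $T^\theta$-gain Duhamel estimate taken from \cite{G}. The only addition over Gr\"unrock's single-equation result is the bookkeeping for the coupled four-component system, which you handle correctly via the telescoping difference; nothing further is needed.
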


\section{Reformulation of the problem and null structure}
The reformulation of the Yang-Mills equations and the reduction of our main theorem to nonlinear estimates is completely taken over from Tesfahun \cite{T} (cf. also the fundamental paper by Selberg and Tesfahun \cite{ST}).

The standard null forms are given by
\begin{equation}\label{OrdNullforms}
\left\{
\begin{aligned}
Q_{0}(u,v)&=\partial_\alpha u \partial^\alpha v=-\partial_t u \partial_t v+\partial_i u \partial^j v,
\\
Q_{\alpha\beta}(u,v)&=\partial_\alpha u \partial_\beta v-\partial_\beta u \partial_\alpha v.
\end{aligned}
        \right.
\end{equation}
For $ g$-valued $u,v$, define a commutator version of null forms by 
\begin{equation}\label{CommutatorNullforms}
\left\{
\begin{aligned}
  Q_0[u,v] &= [\partial_\alpha u, \partial^\alpha v] = Q_0(u,v) - Q_0(v,u),
  \\
  Q_{\alpha\beta}[u,v] &= [\partial_\alpha u, \partial_\beta v] - [\partial_\beta u, \partial_\alpha v] = Q_{\alpha\beta}(u,v) + Q_{\alpha\beta}(v,u).
\end{aligned}
\right.
\end{equation}

 Note the identity
\begin{equation}\label{NullformTrick}
  [\partial_\alpha u, \partial_\beta u]
  = \frac12 \left( [\partial_\alpha u, \partial_\beta u] - [\partial_\beta u, \partial_\alpha u] \right)
  = \frac12 Q_{\alpha\beta}[u,u].
\end{equation}

Define 
\begin{equation}\label{NewNull} 
  \mathcal{Q}[u,v] = Q_{12}[R_1 u_2-R_2 u_1,\phi] - Q_{0i}[R^i u_0,v] \, ,
\end{equation}
where 
$R_i = \Lambda^{-1}\partial_i $ are the Riesz transforms.

We split the spatial part $\mathbf A=(A_1,A_2, A_3)$ of the potential into divergence-free and curl-free parts and a smoother part:
\begin{equation}\label{SplitA}  
\mathbf A = \mathbf A^{\text{df}} + \mathbf A^{\text{cf}} + \Lambda^{-2} \mathbf A,
\end{equation}
where
\begin{align*}
\mathbf A^{\text{df}}&=  (\partial_2(\partial_1 A_2-\partial_2 A_1),-\partial_1(\partial_1 A_2-\partial_2 A_1))
\\
\mathbf A^{\text{cf}}&= -\Lambda^{-2} \nabla (\nabla \cdot \mathbf A).
\end{align*}

\subsection{ Terms of the form $[A^\alpha,\partial_\alpha \phi]$ and $ [\partial_tA^\alpha, \partial_\alpha\phi]$ }

In the Lorenz gauge, terms of the form  $[A^\alpha,\partial_\alpha \phi] $, where $A_\alpha,\phi \in \mathcal S$
with values in $\mathfrak g$, can be shown to be a sum of bilinear null forms 
and a smoother bilinear part whereas the term $ [\partial_tA^\alpha, \partial_\alpha\phi]$ is a null form.  
\begin{lemma}\label{Lemma-Null0} In the Lorenz gauge, we have 
	the identities
	\begin{align}
	\label{Null0}
	[A^\alpha, \partial_\alpha \phi ] & =
	\mathfrak Q\left[\Lambda^{-1} A,\phi\right] + [\Lambda^{-2}  A^\alpha, \partial_\alpha \phi ],
	\\
	\label{Null1}
	[\partial_tA^\alpha, \partial_\alpha\phi]&=  Q_{0i}\left[A^i , \phi \right].
	\end{align}
	
\end{lemma}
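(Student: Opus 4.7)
The proof of (\ref{Null1}) should be essentially a direct consequence of the Lorenz gauge. I would expand
\[
[\partial_t A^\alpha, \partial_\alpha \phi] = [\partial_t A^0, \partial_0 \phi] + [\partial_t A^i, \partial_i \phi],
\]
and use $A^0 = -A_0$ together with $\partial^\alpha A_\alpha = 0$ (which in coordinates reads $\partial_t A_0 = \partial^i A_i$) to replace $\partial_t A^0 = -\partial_i A^i$. Substituting and regrouping yields
\[
[\partial_t A^\alpha, \partial_\alpha \phi] = [\partial_0 A^i, \partial_i \phi] - [\partial_i A^i, \partial_0 \phi] = Q_{0i}[A^i,\phi],
\]
by the very definition of $Q_{0i}$.

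For (\ref{Null0}) the plan is to move $[\Lambda^{-2}A^\alpha,\partial_\alpha\phi]$ to the left-hand side and use the operator identity $1-\Lambda^{-2} = -\Lambda^{-2}\Delta$ to rewrite the difference as $-[\Lambda^{-2}\Delta A^\alpha, \partial_\alpha \phi]$. I would split this into a temporal piece $[\Lambda^{-2}\Delta A_0, \partial_t \phi]$ (after raising the zero index) and a spatial piece $-[\Lambda^{-2}\Delta A^i, \partial_i \phi]$. For the latter, the Hodge-type splitting (\ref{SplitA}) implies $-\Lambda^{-2}\Delta \mathbf{A} = \mathbf{A}^{\mathrm{df}} + \mathbf{A}^{\mathrm{cf}}$, so
\[
-[\Lambda^{-2}\Delta A^i, \partial_i \phi] = [A^{\mathrm{df},i}, \partial_i \phi] + [A^{\mathrm{cf},i}, \partial_i \phi].
\]

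For the divergence-free piece I would use $A^{\mathrm{df}} = \Lambda^{-2}(\partial_2\omega,-\partial_1\omega)$ with $\omega := \partial_1 A_2 - \partial_2 A_1$; summing over $i$ collapses it to $\pm Q_{12}[\Lambda^{-2}\omega,\phi]$, and rewriting $\Lambda^{-2}\omega = R_1(\Lambda^{-1}A)_2 - R_2(\Lambda^{-1}A)_1$ delivers the $Q_{12}$ component of $\mathfrak Q[\Lambda^{-1}A,\phi]$. For the curl-free piece, $A^{\mathrm{cf},i} = -\Lambda^{-2}\partial^i(\nabla\cdot\mathbf{A})$, and the Lorenz gauge $\nabla\cdot\mathbf{A} = \partial_t A_0$ turns it into $-[\Lambda^{-2}\partial^i \partial_t A_0, \partial_i \phi]$. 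This pairs with the temporal piece, which reads $[\Lambda^{-2}\partial^i \partial_i A_0, \partial_t \phi]$, and together the two assemble into $\pm Q_{0i}[\Lambda^{-2}\partial^i A_0,\phi] = \pm Q_{0i}[R^i(\Lambda^{-1}A)_0,\phi]$, i.e.\ exactly the second piece of $\mathfrak Q[\Lambda^{-1}A,\phi]$.

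The main obstacle is conceptual rather than computational: one has to see that the curl-free part $\mathbf{A}^{\mathrm{cf}}$ carries no null structure on its own, but only pairs with the temporal contribution $[\Lambda^{-2}\Delta A_0,\partial_t\phi]$ via the Lorenz gauge identity $\partial_t A_0 = \partial^i A_i$ to expose the $Q_{0i}$ null form. The remaining work is careful sign and index bookkeeping using the Minkowski convention and the explicit definitions of $\mathbf{A}^{\mathrm{df}}$, $\mathbf{A}^{\mathrm{cf}}$ and $\mathfrak Q$ in (\ref{NewNull})--(\ref{SplitA}).
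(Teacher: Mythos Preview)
Your proposal is correct and follows essentially the same route as the paper: the identity (\ref{Null1}) is a one-line consequence of the Lorenz gauge, and for (\ref{Null0}) the paper also uses the splitting $1-\Lambda^{-2}=-\Lambda^{-2}\Delta$ (applied to $A_0$) together with the Hodge decomposition (\ref{SplitA}) for the spatial part, pairing the curl-free piece with the temporal contribution via $\partial_t A_0=\partial^i A_i$ to produce $Q_{0i}$, and extracting $Q_{12}$ from the divergence-free piece. The only cosmetic difference is that the paper carries out the computation for the product $A^\alpha\partial_\alpha\phi$ first and then antisymmetrizes to obtain the commutator version, whereas you work directly at the commutator level; the underlying algebra is identical.
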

\begin{proof}
	To show \eqref{Null0} we modify the proof in \cite{ST},
Lemma 1	whereas \eqref{Null1} is proved in the same paper
	(see identity (2.7) therein). 
	
	Using \eqref{SplitA} we write
	\begin{align*}
	A^\alpha \partial_\alpha \phi
	&=
	\left( - A_0 \partial_t \phi
	+ \mathbf A^{\text{cf}} \cdot \nabla \phi \right)
	+ \mathbf A^{\text{df}} \cdot \nabla \phi +  \Lambda^{-2} \mathbf A  \cdot \nabla \phi
	\end{align*}
	Let us first consider the first term in the parentheses. 
	We use the Lorenz gauge, $\partial_t A_0=\nabla \cdot \mathbf A  $,  to write
	\begin{align*}
	\mathbf A^{\text{cf}} \cdot \nabla \phi&
	=-\Lambda^{-2} \partial^i(\partial_t A_0) \partial_i \phi=
	- \partial_t ( \Lambda^{-1} R^i A_0)\partial_i \phi.
	\end{align*}
	We can also write
	\begin{align*}
	A_0 \partial_t \phi 
	&=-\Lambda^{-2}  \partial_i\partial^i A_0 \partial_t \phi+ \Lambda^{-2}  A_0 \partial_t \phi
	\\
	&=-\partial_i(\Lambda^{-1} R^i A_0) \partial_t \phi
	+  \Lambda^{-2}  A_0 \partial_t \phi.
	\end{align*}
	Combining the above identities, we get
	\begin{align*}
	-A_0 \partial_t \phi +  \mathbf A^{\text{cf}} \cdot \nabla \phi
	&=Q_{i0}(\Lambda^{-1} R^i A_0, \phi)- \Lambda^{-2}  A_0 \partial_t \phi.
	\end{align*}
	
	Next, we consider the second term. 
	We have 
	\begin{align*}
	\mathbf A^{\text{df}} \cdot \nabla \phi
	&= \Lambda^{-2}(\partial_2(\partial_1A_2-\partial_2 A_1) \partial_1 \phi - \partial_1(\partial_1 A_2 - \partial_2 A_1) \partial_2 \phi)
	\\
	&=- \Lambda^{-2} Q_{12}(\partial_1 A_2-\partial_2 A_1,\phi)
	\\
	&=
	-  Q_{12}\left(\Lambda^{-1}( R^1 A_2 -R_2 A_1), \phi\right).
	\end{align*}
	
	Thus, we have shown 
	\begin{equation}\label{Null2}
	\begin{split}
	A^\alpha \partial_\alpha \phi
	&= -  Q_{12}\left(\Lambda^{-1}( R^1 A_2 -R_2 A_1), \phi\right)
	 \\
	& \qquad + Q_{i0}(\Lambda^{-1} R^i A_0, \phi)+\Lambda^{-2}  A^\alpha \partial_\alpha \phi.
	\end{split}
	\end{equation} 
	Similarly, modifying the above argument one can show 
	\begin{equation}\label{Null3}
	\begin{split}
	\partial_\alpha \phi A^\alpha
	&= -  Q_{12}\left(\phi,\Lambda^{-1}( R^1 A_2 -R_2 A_1)\right)
	\\
	& \qquad + Q_{i0}(\phi,\Lambda^{-1} R^i A_0)+ \partial_\alpha \phi \Lambda^{-2}  A^\alpha .
	\end{split}
	\end{equation}   
	Subtracting \eqref{Null2} and \eqref{Null3} yields \eqref{Null0}.
\end{proof}

\subsection{ Terms of the form
	$[A^\alpha,\partial_\beta A_\alpha]$ }
 In the Lorenz gauge, this term can be 
written as a sum of bilinear null form terms, bilinear terms which are smoother,  
a bilinear term which contains only $F$ and higher order terms in $(A, F)$.

\begin{lemma}\label{Lemma-null1} In the Lorenz gauge, we have 
	the identity
	\begin{align*}\label{Null4}
	[A^\alpha,\partial_\beta A_\alpha]&
	=\sum_{i=1}^4\Gamma^i_\beta(A, \partial A, F, \partial F),
	\end{align*}
\end{lemma}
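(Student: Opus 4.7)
The plan is purely algebraic: rewrite $\partial_\beta A_\alpha$ in terms of the ``symmetric'' derivative $\partial_\alpha A_\beta$ so that the preceding lemma can be brought to bear. From the definition of the curvature \eqref{curv} one has
\begin{equation*}
\partial_\beta A_\alpha = \partial_\alpha A_\beta - F_{\alpha\beta} + [A_\alpha, A_\beta],
\end{equation*}
and taking the commutator with $A^\alpha$ yields
\begin{equation*}
[A^\alpha,\partial_\beta A_\alpha] = [A^\alpha,\partial_\alpha A_\beta] - [A^\alpha, F_{\alpha\beta}] + [A^\alpha,[A_\alpha,A_\beta]].
\end{equation*}
This single step already isolates a term containing only the curvature, $-[A^\alpha, F_{\alpha\beta}]$, and a genuinely cubic term $[A^\alpha,[A_\alpha,A_\beta]]$; what remains to treat is the first piece, $[A^\alpha,\partial_\alpha A_\beta]$.

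To this piece I would apply identity \eqref{Null0} of Lemma \ref{Lemma-Null0}, which is available precisely because the Lorenz gauge $\partial^\alpha A_\alpha = 0$ is in force. This rewrites $[A^\alpha,\partial_\alpha A_\beta]$ as the sum of a null form $\mathfrak{Q}[\Lambda^{-1}A, A_\beta]$ and a smoother bilinear remainder $[\Lambda^{-2}A^\alpha,\partial_\alpha A_\beta]$ in which two inverse derivatives land on one factor. Putting the pieces together produces the desired four terms
\begin{align*}
\Gamma^1_\beta &= \mathfrak Q[\Lambda^{-1} A, A_\beta], \\
\Gamma^2_\beta &= [\Lambda^{-2} A^\alpha, \partial_\alpha A_\beta], \\
\Gamma^3_\beta &= -[A^\alpha, F_{\alpha\beta}], \\
\Gamma^4_\beta &= [A^\alpha,[A_\alpha, A_\beta]],
\end{align*}
which match the four announced categories: a bilinear null form, a smoother bilinear term, a bilinear term containing $F$, and a cubic higher order term in $(A,F)$.

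The proof presents essentially no analytic obstacle; the entire content is the algebraic identity above combined with the already-established Lemma \ref{Lemma-Null0}. The only points requiring care are bookkeeping: checking the sign conventions in \eqref{curv} so that the curvature term appears with the correct sign, and verifying that the Lorenz gauge hypothesis truly applies to the $A^\alpha$ appearing in $[A^\alpha,\partial_\alpha A_\beta]$ so that \eqref{Null0} can be invoked. Once the decomposition is in place, the real work — estimating each $\Gamma^i_\beta$ in the appropriate Fourier--Lebesgue-type Bourgain space — is deferred to the bilinear and multilinear estimates of Chapters 5--7.
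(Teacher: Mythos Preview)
Your decomposition is algebraically valid but it is \emph{not} a proof of the stated lemma. The $\Gamma^i_\beta$ in the claim are the specific expressions displayed in \eqref{Gammas} immediately after the statement; they are not free for you to redefine. Your four terms bear no resemblance to those: for instance the paper's $\Gamma^1_\beta$ is the non-$Q$-type null form $-[A_0,\partial_\beta A_0]+[\Lambda^{-1}R_j\partial_t A_0,\Lambda^{-1}R^j\partial_t\partial_\beta A_0]$ (later handled by Lemma~\ref{Lemma2.1}), and the paper's $\Gamma^3_\beta$ is a sum of terms of the schematic form $[\Lambda^{-1}F_{12},\Lambda^{-1}\partial F_{12}]$ plus higher-order corrections --- a bilinear term containing \emph{only} $F$, with two extra inverse derivatives. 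Your $\Gamma^3_\beta=-[A^\alpha,F_{\alpha\beta}]$ is neither of these.

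More seriously, your route is circular and analytically useless here. The term $[A^\alpha,\partial_\beta A_\alpha]$ appears in \eqref{YM4} precisely because one has \emph{expanded} the bad factor $[A^\alpha,F_{\alpha\beta}]$ in \eqref{YM3}. Your first move undoes that expansion and hands back $[A^\alpha,F_{\alpha\beta}]$; substituting your identity into \eqref{YM4} literally returns \eqref{YM3}. The product $[A,F]$ carries no null structure and no $\Lambda^{-2}$ gain, so at the target regularity $(s,l)$ with $l>-\tfrac14$ (in the $r=2$ case) the required estimate $\|AF\|_{H^{s-1,b-1+}}\lesssim\|A\|_{X^{s,b}}\|F\|_{X^{l,b}}$ fails the hypotheses of Proposition~\ref{Prop.7.1}: one would need $l>0$. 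That is exactly why the paper does not list any such estimate among \eqref{21}--\eqref{37}.

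The genuine argument (due to Tesfahun) does not touch \eqref{curv} at all. One splits $[A^\alpha,\partial_\beta A_\alpha]=-[A_0,\partial_\beta A_0]+[\mathbf A,\partial_\beta\mathbf A]$ and treats the spatial piece via the decomposition \eqref{SplitA}. The curl-free part $\mathbf A^{\mathrm{cf}}$ is rewritten, through the Lorenz gauge $\nabla\!\cdot\!\mathbf A=\partial_t A_0$, in terms of $\partial_t A_0$ and combines with the temporal piece to produce $\Gamma^1_\beta$. The divergence-free part is governed by $\partial_1 A_2-\partial_2 A_1=F_{12}-[A_1,A_2]$, which generates the $Q_{12}$ null forms $\Gamma^2_\beta$ and the $\Lambda^{-2}$-smoothed $F_{12}$ terms $\Gamma^3_\beta$; the remainders with an explicit $\Lambda^{-2}\mathbf A$ give $\Gamma^4_\beta$. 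This is the structure you need to reproduce.
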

where
\begin{equation}\label{Gammas}
\left\{
\begin{aligned}
\Gamma^1_\beta(A, \partial A,  F, \partial F)
&=-[A_0,\partial_\beta A_0] + 
[\Lambda^{-1} R_j (\partial_t A_0),  \Lambda^{-1} R^j  \partial_t (\partial_\beta A_0)] ,
\\
\Gamma^2_\beta(A, \partial A,  F, \partial F)
&= - \Big\{ Q_{12}[\Lambda^{-1}  R^n \mathbf A_n ,\Lambda^{-1} (R^1 \partial_\beta A^2 -R_2 \partial_{\beta} A_1) ] \\
&\hspace{6em}+ 
Q_{12}[\Lambda^{-1}  R^n \partial_\beta  A_n , \Lambda^{-1}  (R^1 A_2 - R_2 A_1) ]
\Big\},
\\
\Gamma^3_\beta(A, \partial A,  F, \partial F)&=[ \Lambda^{-2} \partial_j  F_{12} ,
\Lambda^{-2} \partial_{\beta} \partial^j F_{12}]
\\
&\quad
-[\Lambda^{-2}\partial_j  F_{12} , 
\Lambda^{-2} \partial_\beta \partial^j [ A_1,  A_2]]
\\
& \quad- [ \Lambda^{-2} \partial_j [ A_1, A_2] ,
\Lambda^{-2} \partial_\beta \partial^j F_{12} ] 
\\
&\quad+ [\Lambda^{-2}\partial_j [A_1,A_2],
\Lambda^{-2}\partial_\beta \partial^j  [A_1,A_2] ],
\\
\Gamma^4_\beta(A, \partial A,  F, \partial F)&=[\mathbf A^{\text{cf}} + \mathbf A^{\text{df}}, \Lambda^{-2} \partial_\beta \mathbf A] 
+ [\Lambda^{-2}\mathbf A,  \partial_\beta \mathbf A  ].
\end{aligned}
\right.
\end{equation}

Thus, $\Gamma^2_\beta$ is a combination of the commutator version $Q$-type null forms.
The term $\Gamma^1_\beta$ is also a null form (of non $Q$-type) as shown below.

\subsection{The system \eqref{YMF2}--\eqref{YM4} in terms of the null forms}
In view of Lemma \ref{Lemma-Null0} the first, second and third bilinear terms in \eqref{YMF2} 
are null forms up to some smoother bilinear terms. 
By the identity \eqref{NullformTrick}, the fourth and fifth
terms are identical to $2Q_0[A_\beta,A_\gamma]$ and $Q_{\beta\gamma}[A^\alpha,A_\alpha]$, respectively.

By Lemma \ref{Lemma-Null0}, the first term in \eqref{YM4} is a null form up to some smoother bilinear terms. 
By Lemma \ref{Lemma-null1} the second term in \eqref{YM4} is a sum of bilinear null form terms, bilinear terms 
which are smoother,  
a bilinear term which contains only $F$ and higher order terms in $(A, F)$. 

Thus the system (\ref{YMF2}),(\ref{YM4}) in Lorenz gauge can be written in the following form
\begin{equation}\label{AF}
\begin{aligned}
  \square A_\beta &=  \mathcal M_\beta(A,\partial_t A,F,\partial_t F),
  \\
  \square F_{\beta\gamma} &=  \mathcal N_{\beta\gamma}(A,\partial_t A,F,\partial_t F),
\end{aligned}
\end{equation}
where
\begin{align*}
  \mathcal M_\beta(A,\partial_t A,F,\partial_t F) &= -2 \mathcal Q[\Lambda^{-1} A,A_\beta] +
  \sum_{i=1}^4\Gamma^i_\beta(A, \partial A, F, \partial F)-2[\Lambda^{-2}  A^\alpha, \partial_\alpha A_\beta ]
  \\
 &\quad  - [A^\alpha, [A_\alpha, A_\beta]],
   \end{align*}

  \begin{align*}
  \mathcal N_{12}(A,\partial_t A,F,\partial_t F)
  = &- 2\mathcal Q[\Lambda^{-1} A,F_{12}]
  + 2\mathcal Q[\Lambda^{-1} \partial_2 A, A_1]- 2\mathcal Q[\Lambda^{-1} \partial_1 A, A_2] 
  \\
  & + 2Q_0[A_1 , A_2]
  + Q_{12}[A^\alpha,A_\alpha]-2[\Lambda^{-2}  A^\alpha, \partial_\alpha F_{12} ]
  \\
  &+2[\Lambda^{-2}  \partial_2A^\alpha, \partial_\alpha A_{1} ]-2[\Lambda^{-2}  \partial_1 A^\alpha, \partial_\alpha A_{2} ]
  \\
  & - [A^\alpha,[A_\alpha,F_{12}]] + 2[F_{\alpha 1},[A^\alpha,A_2]] - 2[F_{\alpha 2},[A^\alpha,A_1]]
  \\
  & - 2[[A^\alpha,A_1],[A_\alpha,A_2]],
  \end{align*} 
  
  \begin{align*}
  \mathcal N_{0i}(A,\partial_t A,F,\partial_t F)
  = &- 2\mathcal Q[\Lambda^{-1} A,F_{0i}]
  + 2\mathcal Q[\Lambda^{-1} \partial_i A, A_0]- 2 Q_{0j}[A^j,A_i] \\
	&+ 2Q_0[A_0 , A_i]
  + Q_{0i}[A^\alpha,A_\alpha]-2[\Lambda^{-2}  A^\alpha, \partial_\alpha F_{0i} ]\\
	&+2[\Lambda^{-2}  \partial_i A^\alpha, \partial_\alpha A_{0} ]
   - [A^\alpha,[A_\alpha,F_{0i}]] + 2[F_{\alpha 0},[A^\alpha,A_i]] \\ 
	&- 2[F_{\alpha i},[A^\alpha,A_0]]
   - 2[[A^\alpha,A_0],[A_\alpha,A_i]]
  \end{align*}

In a standard way we rewrite the system (\ref{AF}) as a first order (in t) system. Defining
$A_{\pm} = \half(A \pm (i\Lambda)^{-1} \partial_t A) \quad , \quad F_{\pm} = \half(F \pm (i \Lambda)^{-1}\partial_t F) $ , so that $A=A_++A_-$ , $\partial_t A= i \Lambda(A_+-A_-)$ , $F=F_++F_-$ , $\partial_t F = i \Lambda (F_+-F_-)$ the system transforms to
\begin{align}
(i \partial_t \pm \Lambda)A^{\beta}_{\pm} & = -A^{\beta} \mp(2 \Lambda)^{-1} \mathcal M_\beta(A,\partial_t A,F,\partial_t F)\, ,\\
(i \partial_t \pm \Lambda)F^{\beta \gamma}_{\pm} & = -F^{\beta \gamma} \mp(2 \Lambda)^{-1} \mathcal N_{\beta\gamma}(A,\partial_t A,F,\partial_t F) \, .
\end{align}
The initial data transform to 
$$A_{\pm}(0)= \half(a \pm (i\Lambda)^{-1} \dot{a}) \in 
\widehat{H}^{s,r} \quad , \quad F_{\mp}(0)= \half(f \pm (i\Lambda)^{-1} \dot{f}) \in \widehat{H}^{l,r} \, .$$

Now, looking at the terms in $ \mathcal{M}_{\beta}$ and $ \mathcal{N}_{\beta \gamma}$ and noting the fact that the Riesz transforms 
$R_i$ are bounded in the spaces involved, the estimates in Theorem \ref{Theorem0.3} 
reduce to proving:\\
1. the estimates for the null forms $Q_{12}$ , $Q_0$ and $Q \in \{Q_{0i},Q_{12}\}$ :
\begin{align}
  \label{21}
  \norm{ Q[\Lambda^{-1} A, A]}_{H^r_{s-1,b-1+}}
  &\lesssim \|A\|_{X^r_{s,b}} \|A\|_{X^r_{s,b}},
  \\
    \label{22}
  \norm{  Q_{12}[\Lambda^{-1} A, \Lambda^{-1} \partial A]}_{H^r_{s-1,b-1+}}
  &\lesssim  \|A\|_{X^r_{s,b}} \|A\|_{X^r_{s,b}} ,
  \\
 \label{23}
  \norm{ Q[\Lambda^{-1}A, F]}_{H^r_{l-1,a-1+} }
  &\lesssim \|A\|_{X^r_{s,b}}  \|F\|_{X^r_{l,a}},
    \\
  \label{24}
  \norm{  Q[ A,   A]}_{H^r_{l-1,a-1+} }
  &\lesssim \|A\|_{X^r_{s,b}} \|A\|_{X^r_{s,b}} ,\\
	\label{25}
  \norm{ Q_0[ A,   A]}_{H^r_{l-1,a-1+} }
  &\lesssim \|A\|_{X^r_{s,b}}  \|A\|_{X^r_{s,b}} ,
	\end{align} 
the following estimate for $\Gamma^1$ and other bilinear terms
\begin{align}
  \label{26}
  \norm{\Gamma^1( A, \partial A)}_{H^r_{s-1,b-1+}}
  &\lesssim\|A\|_{X^r_{s,b}} \|A\|_{X^r_{s,b}} ,
  \\
  \label{27} 
   \norm{\Pi( A, \Lambda^{-2} \partial A  ) }_{H^r_{s-1,b-1+}}
     &\lesssim \|A\|_{X^r_{s,b}} \|A\|_{X^r_{s,b}} ,
     \\
     \label{28} 
   \norm{ \Pi( \Lambda^{-2} A,  \partial A)   }_{H^r_{s-1,b-1-+}}
     &\lesssim \|A\|_{X^r_{s,b}} \|A\|_{X^r_{s,b}} ,
     \\
  \label{29} 
  \norm{\Pi(\Lambda^{-1} F, \Lambda^{-1} \partial F  ) }_{H^r_{s-1,b-1+}}
     &\lesssim \|F\|_{X^r_{l,a}} \|F\|_{X^r_{l,a}},
     \\
       \label{30} 
   \norm{\Pi( \Lambda^{-2} A,  \partial F)   }_{H^r_{l-1,a-1+}}
     &\lesssim \|A\|_{X^r_{s,b}} \|F\|_{X^r_{l,a}},
     \\
       \label{31} 
   \norm{ \Pi( \Lambda^{-1} A,  \partial A)   }_{H^r_{l-1,a-1+}}
     &\lesssim \|A\|_{X^r_{s,b}} \|A\|_{X^r_{s,b}}
     \end{align}
and\\
2. the following trilinear and quadrilinear estimates:
 \begin{align}
   \label{32}
   \norm{\Pi(\Lambda^{-1} F,\Lambda^{-1} \partial( AA) )}_{H^r_{s-1,b-1+}}
  &\lesssim  \|F\|_{X^r_{l,a}} \|A\|_{X^r_{s,b}} \|A\|_{X^r_{s,b}} ,
  \\
   \label{33}
   \norm{\Pi(\Lambda^{-1}\partial F, \Lambda^{-1}  ( AA) )}_{H^r_{s-1,b-1+}}
  &\lesssim  \|F\|_{X^r_{l,a}} \|A\|_{X^r_{s,b}} \|A\|_{X^r_{s,b}} ,
  \\
  \label{34}
   \norm{\Pi(\Lambda^{-1}(AA), \Lambda^{-1} \partial ( AA)) }_{H^r_{s-1,b-1+}}
  &\lesssim  \|A\|_{X^r_{s,b}} \|A\|_{X^r_{s,b}} \|A\|_{X^r_{s,b}} \|A\|_{X^r_{s,b}} ,
  \\
   \label{35} 
  \norm{\Pi(A,A,A)}_{H^r_{s-1,b-1+}}
  &\lesssim\|A\|_{X^r_{s,b}} \|A\|_{X^r_{s,b}} \|A\|_{X^r_{s,b}},
  \\
   \label{36}
  \norm{\Pi(A, A, F)}_{H^r_{l-1,a-1+}}
  &\lesssim \|A\|_{X^r_{s,b}} 
  \|A\|_{X^r_{s,b}}\|F\|_{X^r_{l,a}},
  \\
   \label{37}
  \norm{\Pi(A,A, A, A)}_{H^r_{l-1,a-1+}}
  &\lesssim  \|A\|_{X^r_{s,b}} \|A\|_{X^r_{s,b}} \|A\|_{X^r_{s,b}} \|A\|_{X^r_{s,b}} \, .
     \end{align}
$\Pi(\cdots)$ denotes a multilinear operator in its arguments and
$\|u\|_{X^r_{s,b}} := \|u_-\|_{X^r_{s,b,-}} + \|u_+\|_{X^r_{s,b,+}}$ .

The matrix commutator null forms are linear combinations of the ordinary ones, 
in view of \eqref{CommutatorNullforms}. Since the matrix 
structure plays no role in the estimates under consideration, 
we reduce (\ref{21})--(\ref{25}) to estimates of the ordinary null forms for $\mathbb C$-valued 
functions $u$ and $v$ (as in \eqref{OrdNullforms}).

Next we consider the term $\Gamma_{\beta}^1$ and want to show that it is a null form. In fact the detection of this null structure was the main progress of his paper over Selberg-Tesfahun \cite{ST}.  

We may ignore its matrix form and treat 
$$\Gamma^1_k(A_0,, \partial_k A_0)
=-A_0 (\partial_k A_0) + 
\Lambda^{-1} R_j (\partial_t A_0)  \Lambda^{-1} R^j  \partial_t (\partial_k A_0))$$
for $k=1,2,3$ and
\begin{align*}
\Gamma^1_0(A_0,, \partial^i A_i)
&=-A_0 (\partial_0 A_0) + 
\Lambda^{-1} R_j (\partial_t A_0)  \Lambda^{-1} R^j  \partial_t (\partial_0 A_0)) \\
& = -A_0 (\partial^i A_i) + 
\Lambda^{-1} R_j (\partial_t A_0)  \Lambda^{-1} R^j  \partial_t (\partial^i A_i)) \,,
\end{align*}
where we used the Lorenz gauge $\partial_0 A_0 = \partial^i A_i$ in the last line in order to eliminate one time derivative. Thus we have to consider
$$ \Gamma^1(u,v) = -uv + \Lambda^{-1} R_j (\partial_t u) \Lambda^{-1} R^j(\partial_t v) \, , $$
where $u=A_0$ and $v=\partial^i A_i$ or $v=\partial_k A_0$ .

The proof of the following lemma was essentially given by Tesfahun \cite{T}. 
\begin{lemma}
\label{Lemma2.1}
Let $q_{12}(u,v) := Q_{12}(D^{-1}u,D^{-1}v)$ , $q_0(u,v) := Q_0(D^{-1}u,D^{-1}v)$ .
The following estimate holds:
\begin{align}
\label{45'}
\Gamma^1(u,v) & q_{12}(u,v) + q_0(u,v) + (\Lambda^{-2}u)v + u(\Lambda^{-2}v) \, .  
\end{align}
Here $u \preceq v$ means $\abs{\widehat u} \lesssim \abs{\widehat v}$ .
\end{lemma}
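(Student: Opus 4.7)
I would argue by a direct pointwise comparison of bilinear Fourier symbols. Writing the Fourier variables of $u$ and $v$ as $(\tau,\xi)$ and $(\lambda,\eta)$, the first step is to read off the bilinear symbol of $\Gamma^1$: since $\Lambda^{-1}R_j\partial_t$ has symbol $-\xi_j\tau/\langle\xi\rangle^2$ (up to the factor $i^2=-1$), summing in $j$ collapses the second summand to $\tau\lambda(\xi\cdot\eta)/(\langle\xi\rangle^2\langle\eta\rangle^2)$, and so
\begin{equation*}
\sigma_\Gamma(\tau,\xi;\lambda,\eta)=\frac{-\langle\xi\rangle^2\langle\eta\rangle^2+\tau\lambda(\xi\cdot\eta)}{\langle\xi\rangle^2\langle\eta\rangle^2}.
\end{equation*}
Because the notation $u\preceq v$ refers to pointwise Fourier estimates, the lemma reduces to bounding $|\sigma_\Gamma|$ pointwise by the sum of the absolute values of the symbols of the four terms on the right-hand side.

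The decisive step is purely algebraic, and this is where the restriction to $n=2$ enters essentially, via the Lagrange identity $|\xi|^2|\eta|^2-(\xi\cdot\eta)^2=(\xi_1\eta_2-\xi_2\eta_1)^2$. Combined with the expansion $\langle\xi\rangle^2\langle\eta\rangle^2=1+|\xi|^2+|\eta|^2+|\xi|^2|\eta|^2$, it lets me rewrite the numerator of $\sigma_\Gamma$ as
\begin{equation*}
-1-|\xi|^2-|\eta|^2+(\tau\lambda-\xi\cdot\eta)(\xi\cdot\eta)-(\xi_1\eta_2-\xi_2\eta_1)^2,
\end{equation*}
which exposes the $Q_0$-symbol factor $(\tau\lambda-\xi\cdot\eta)$ and the $Q_{12}$-symbol factor $(\xi_1\eta_2-\xi_2\eta_1)$ as the only contributions that could a priori be of top order.

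It remains to match each of the four summands, divided by $\langle\xi\rangle^2\langle\eta\rangle^2$, to one of the right-hand symbols. The $-1$ and $-|\eta|^2$ pieces are dominated by $\langle\xi\rangle^{-2}$ (the symbol of $(\Lambda^{-2}u)v$) and the $-|\xi|^2$ piece by $\langle\eta\rangle^{-2}$ (the symbol of $u(\Lambda^{-2}v)$). For the cross term, using $|\xi\cdot\eta|\le|\xi||\eta|$ and $|\xi|^2|\eta|^2\le\langle\xi\rangle^2\langle\eta\rangle^2$ one bounds $|(\tau\lambda-\xi\cdot\eta)(\xi\cdot\eta)|/(\langle\xi\rangle^2\langle\eta\rangle^2)$ by $|\tau\lambda-\xi\cdot\eta|/(|\xi||\eta|)=|\sigma_{q_0}|$, and the same device together with $|\xi_1\eta_2-\xi_2\eta_1|\le|\xi||\eta|$ controls the last piece by $|\sigma_{q_{12}}|$. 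Adding the four estimates yields the claim.

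The only genuine obstacle is spotting the Lagrange identity at the right moment: once the numerator is rewritten in this form the null structure of $\Gamma^1$ is transparent and the remaining inequalities are immediate, but without it the expression $-\langle\xi\rangle^2\langle\eta\rangle^2+\tau\lambda(\xi\cdot\eta)$ looks like a non-null remainder that would spoil the estimates needed in Section 4.
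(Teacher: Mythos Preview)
Your argument is correct and is essentially the same as the paper's proof. The only cosmetic difference is that the paper splits the symbol as $I+II$ with $I=\frac{(\xi\cdot\eta)^2}{\langle\xi\rangle^2\langle\eta\rangle^2}-1$ and estimates $I$ via $\sin^2\angle(\xi,\eta)$ together with the error $\frac{|\xi|^2|\eta|^2-\langle\xi\rangle^2\langle\eta\rangle^2}{\langle\xi\rangle^2\langle\eta\rangle^2}$; since in two dimensions $\sin^2\angle(\xi,\eta)=(\xi_1\eta_2-\xi_2\eta_1)^2/(|\xi|^2|\eta|^2)$, this is exactly your Lagrange identity decomposition written in geometric language, and the handling of the $q_0$-piece $II$ is identical to yours.
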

\begin{proof}
$\Gamma^1(u,v)$ has the symbol
\begin{align*}
p(\xi,\tau,\eta,\lambda)& = -1 + \frac{\angles{\xi,\eta} \tau \lambda}{\angles{\xi}^2 \angles{\eta}^2} = \left( -1 + \frac{\angles{\xi,\eta} \angles{\xi,\eta}}{\angles{\xi}^2 \angles{\eta}^2}\right) + \frac{(\tau \lambda -\angles{\xi,\eta}) \angles{\xi,\eta}}{\angles{\xi}^2 \angles{\eta}^2}  = I + II
\end{align*}
Now we estimate
\begin{align*}
|I| & = \left| \frac{|\xi|^2 |\eta|^2\cos^2 \angle(\xi,\eta)}{\angles{\xi}^2 \angles{\eta}^2} -1 \right| \\
&
\le \left| \frac{\angles{\xi}^2 \angles{\eta}^2 \cos^2 \angle(\xi,\eta)}{\angles{\xi}^2 \angles{\eta}^2} -1 \right|
+ \left| \frac{|\xi|^2 |\eta|^2 - \angles{\xi}^2 \angles{\eta}^2}{\angles{\xi}^2 \angles{\eta}^2} \right| \\
& = \sin^2 \angle(\xi,\eta) + \left| \frac{|\xi|^2 |\eta|^2 - \angles{\xi}^2 \angles{\eta}^2}{\angles{\xi}^2 \angles{\eta}^2} \right| \, ,
\end{align*}
where $\angle(\xi,\eta)$ denotes the angle between $\xi$ and $\eta$ .
We have
$$\left| \frac{|\xi|^2 |\eta|^2 - \angles{\xi}^2 \angles{\eta}^2}{\angles{\xi}^2 \angles{\eta}^2} \right| = \frac{|\xi|^2 + |\eta|^2 + 1}{\angles{\xi}^2 \angles{\eta}^2} \le \frac{1}{\angles{\xi}^2} + \frac{1}{\angles{\eta}^2} $$
and
$$|\sin \angle(\xi,\eta)| = \frac{|\xi_1 \eta_2 - \xi_2 \eta_1|}{|\xi|\,|\eta|} \, . $$
 Thus the operator belonging to the symbol I is controlled by 
$ q_{12}(u,v) +  (\Lambda^{-2}u)v + u(\Lambda^{-2}v)$ .
Moreover 
$$ |II| \le \frac{|\tau \lambda - \angles{\xi,\eta}|}{\angles{\xi} \angles{\eta}} \le |q_0(\xi,\eta)| \, .$$ 
Thus we obtain (\ref{45'}).
\end{proof}

\section{Bilinear estimates}
The proof of the following bilinear estimates relies on estimates given by Foschi and Klainerman \cite{FK}. We first treat the case $r>1$ , but close to $1$.
\begin{lemma}
\label{Lemma5.1}
Assume $0 \le\alpha_1,\alpha_2 $ ,  $\alpha_1+\alpha_2 \ge \frac{1}{r}$ and $ b > \frac{1}{r}$. The following estimate applies
$$ \|q_{12}(u,v)\|_{H^r_{0,0}} \lesssim \|u\|_{X^r_{\alpha_1,b,\pm_1}} \|v\|_{X^r_{\alpha_2,b,\pm_2}} \, . $$
\end{lemma}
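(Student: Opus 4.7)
The plan is to invoke the transfer principle (Proposition \ref{Prop.0.1}), which, using the hypothesis $b > 1/r$, reduces the claim to the corresponding bound for free waves. Since $\|\cdot\|_{H^r_{0,0}}$ coincides with $\|\cdot\|_{\hat L^r_{t,x}}$ by definition of the $\hat L^p$ spaces, it suffices to show, for every combination of signs $(\pm_1,\pm_2)$,
\begin{equation*}
\bigl\| Q_{12}(D^{-1} e^{\pm_1 it D} f_1, D^{-1} e^{\pm_2 it D} f_2) \bigr\|_{\hat L^r_{t,x}} \lesssim \|f_1\|_{\widehat{H}^{\alpha_1,r}} \|f_2\|_{\widehat{H}^{\alpha_2,r}}.
\end{equation*}
On the Fourier side the bilinear operator $q_{12}$ carries the symbol $(\xi_1\eta_2-\xi_2\eta_1)/(|\xi|\,|\eta|)$, whose modulus is precisely $|\sin\angle(\xi,\eta)|$ in two dimensions.

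The key input is the 2D null-form angle estimate. Setting $\tau = \pm_1|\xi|\pm_2|\eta|$ and $\zeta=\xi+\eta$, one derives from the identity $|\tau|^2-|\zeta|^2 = \pm 2|\xi||\eta|(1\mp\cos\angle(\xi,\eta))$ and the half-angle formulas that
\begin{equation*}
|\sin\angle(\xi,\eta)| \;\lesssim\; \biggl(\frac{\bigl|\,|\tau|-|\zeta|\,\bigr|}{\min(|\xi|,|\eta|)}\biggr)^{1/2},
\end{equation*}
uniformly in the signs. Thus the null form vanishes to half order on the set where the output $(\tau,\zeta)$ lies on the light cone, which is where the convolution of the two input cone measures is concentrated.

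Inserting this angle bound into the convolution representation for the Fourier transform of the product of two free waves, and dualizing against a test function in $L^r_{\tau,\zeta}$, the problem reduces to a non-null bilinear restriction estimate for 2D free waves with an extra weight $\min(|\xi|,|\eta|)^{-1/2}$. Such estimates are supplied by the 2D bilinear bounds of Foschi-Klainerman \cite{FK} and their $\hat L^r$-based refinements due to Grigoryan-Nahmod \cite{GN} and Grigoryan-Tanguay \cite{GT}. After a standard dyadic decomposition in the input frequencies one treats the high-high, high-low, and low-high regimes separately and sums the resulting geometric series in the frequency parameters.

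The main obstacle is that the estimate is scaling-critical: a direct scaling check (using that $q_{12}$ is of order zero and that the $\widehat{H}^{\alpha,r}$ and $\hat L^r_{t,x}$ norms of homogeneous rescalings behave as $\lambda^{\alpha-2/r}$ and $\lambda^{-3/r}$ respectively) shows that $\alpha_1+\alpha_2 = 1/r$ is the sharp borderline, so the $1/2$-derivative gain from the null form must be used with no loss. The delicate regime is the high-high interaction, where the denominator $\min(|\xi|,|\eta|)^{1/2}$ must be absorbed exactly by the weights $\langle\xi\rangle^{\alpha_1}\langle\eta\rangle^{\alpha_2}$ via a careful application of the Foschi-Klainerman-type bound; this is precisely what produces the hypotheses $\alpha_1+\alpha_2 \ge 1/r$ and $\alpha_1,\alpha_2 \ge 0$ (to allow any distribution of the remaining regularity between the two factors).
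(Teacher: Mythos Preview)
Your overall strategy matches the paper's: reduce via the transfer principle to free waves, exploit the null-form angle bound from \cite{FK}, and then control the resulting weighted bilinear cone integral. But the proposal stops precisely where the actual work begins, and the black-box appeals you make do not close the argument.

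Concretely, the product estimates in \cite{GT} (this is Lemma~\ref{Lemma4} in the paper) require $\alpha_1+\alpha_2 > \tfrac{3}{2r}$, a full $\tfrac{1}{2r}$ above the threshold $\alpha_1+\alpha_2 \ge \tfrac{1}{r}$ you need; the $\hat L^r$ null-form bounds in \cite{GN} likewise do not give the endpoint. So ``such estimates are supplied by \cite{FK}, \cite{GN}, \cite{GT}'' is not true as stated. Relatedly, the sentence ``after a standard dyadic decomposition \ldots\ sums the resulting geometric series'' is exactly where the endpoint fails: at $\alpha_1+\alpha_2 = \tfrac1r$ the high--high block sum is \emph{not} geometric and you would incur a logarithmic loss. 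You correctly identify the difficulty (``the $\tfrac12$-derivative gain from the null form must be used with no loss'') but do not say how.

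What the paper actually does: reduce by interpolation to $(\alpha_1,\alpha_2)=(\tfrac1r,0)$, write the $H^r_{0,0}$-norm explicitly for free waves, apply H\"older in the inner convolution variable to pull out
\[
I \;=\; |\xi|^{1/2}\,\bigl||\tau|-|\xi|\bigr|^{1/2}\Bigl(\int \delta(\tau \mp_1|\eta|\mp_2|\xi-\eta|)\,|\eta|^{-1-r/2}|\xi-\eta|^{-r/2}\,d\eta\Bigr)^{1/r},
\]
and then show $\sup_{\tau,\xi} I \lesssim 1$ by evaluating the delta-integral \emph{exactly} via the asymptotic formulas of \cite{FK} (Prop.~4.3 in the elliptic case $\pm_1=\pm_2$; Prop.~4.5 and Lemma~4.4 in the hyperbolic case, splitting further on $|\eta|+|\xi-\eta|\lessgtr 2|\xi|$ and on the sign of $\tau$). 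Note also that the paper uses the sharper form of \cite[Lemma 13.2]{FK}, namely $|q_{12}(\eta,\xi-\eta)| \lesssim |\xi|^{1/2} b_\pm(\eta,\xi-\eta)^{1/2}/(|\eta|\,|\xi-\eta|)^{1/2}$, rather than your weaker $\bigl(b_\pm/\min\bigr)^{1/2}$; the extra factor $|\xi|^{1/2}/\max^{1/2}$ is what makes the high--high $\to$ low output case close without a log. Your outline is missing this entire computation.
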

\begin{proof}
	Because we use inhomogeneous norms it is obviously possible to assume $\alpha_1 + \alpha_2 = \frac{1}{r}$ . Moreover, by interpolation we may reduce to the case $\alpha_1= \frac{1}{r}$ , $\alpha_2 =0$ .
	
The left hand side of the claimed estimate equals
$$ \|{\mathcal F}(q_{12}(u,v))\|_{L^{r'}_{\tau \xi}} = \| \int q_{12}(\eta,\eta-\xi) \tilde{u}(\lambda,\eta) \tilde{v}(\tau - \lambda,\xi - \eta) d\lambda d\eta \|_{L^{r'}_{\tau \xi}} \, . $$
Let now $u(t,x) = e^{\pm_1 iD} u_0^{\pm_1}(x)$ , $v(t,x) = e^{\pm_2 iD} v_0^{\pm_2}(x)$ , so that 
$$ \tilde{u}(\tau,\xi) = c \delta(\tau \mp_1 |\xi|) \widehat{u_0^{\pm_1}}(\xi) \quad , \quad \tilde{v}(\tau,\xi) = c \delta(\tau \mp_2 |\xi|) \widehat{v_0^{\pm_2}}(\xi) \, . $$
This implies
\begin{align*}
&\|{\mathcal F}(q_{12}(u,v))\|_{L^{r'}_{\tau \xi}} \\
&= c^2 \| \int q_{12}(\eta,\eta-\xi) \widehat{u_0^{\pm_1}}(\eta) \widehat{v_0^{\pm_2}}(\xi-\eta) \,\delta(\lambda \mp_1 |\eta|) \delta(\tau-\lambda\mp_2|\xi-\eta|) d\lambda d\eta \|_{L^{r'}_{\tau \xi}} \\
& = c^2 \| \int q_{12}(\eta,\eta-\xi) \widehat{u_0^{\pm_1}}(\eta) \widehat{v_0^{\pm_2}}(\xi-\eta) \,\delta(\tau\mp_1|\eta| \mp_2|\xi-\eta|) d\eta \|_{L^{r'}_{\tau \xi}} \, .
\end{align*}
By symmetry we only have to consider the  elliptic case $\pm_1=\pm_2 = +$ and the hyperbolic case $\pm_1= + \, , \, \pm_2=-$ .  \\
{\bf Elliptic case.} We obtain by \cite{FK}, Lemma 13.2:
$$|q_{12}(\eta,\xi-\eta)| \le \frac{|\eta_1 (\xi - \eta)_2 - \eta_2 (\xi-\eta)_1|}{|\eta| \, |\xi - \eta|} \lesssim \frac{|\xi|^{\half} (|\eta| + |\xi - \eta| - |\xi|)^{\half}}{|\eta|^{\half} |\xi - \eta|^{\half}} \, . $$
By H\"older's inequality we obtain
\begin{align*}
&\|{\mathcal F}(q_{12}(u,v))\|_{L^{r'}_{\tau \xi}} \\
& \lesssim \|\int \frac{|\xi|^{\half} ||\tau|-|\xi||^{\half}}{|\eta|^{\half} |\xi - \eta|^{\half}} \,
 \delta(\tau-|\eta|-|\xi - \eta|) \, |\widehat{u_0^+}(\eta)| \, |\widehat{v_0^+}(\xi - \eta)| d\eta \|_{L^{r'}_{\tau \xi}} \\
& \lesssim \sup_{\tau,\xi} I  \,\, \|\widehat{D^{\frac{1}{r}} u_0^+}\|_{L^{r'}} \| \widehat{v_0^+}\|_{L^{r'}} \, ,
\end{align*}
where
$$ I = |\xi|^{\half} ||\tau|-|\xi||^{\half} \left( \int \delta(\tau - |\eta| - |\xi - \eta|) \, |\eta|^{-1-\frac{r}{2}} |\xi - \eta|^{-\frac{r}{2}} d\eta \right)^{\frac{1}{r}} \, . $$
We want to prove $ \sup_{\tau,\xi} I \lesssim 1 $ . By \cite{FK}, Lemma 4.3 we obtain
$$\int \delta(\tau - |\eta| - |\xi - \eta|) \, |\eta|^{-1-\frac{r}{2}} |\xi - \eta|^{-\frac{r}{2}} d\eta \sim \tau^A ||\tau|-|\xi||^B \, , $$
where $A= \max(1+\frac{r}{2},\frac{3}{2}) - 1-r= -\frac{r}{2}$ and $B=1-\max(1+\frac{r}{2},\frac{3}{2})=-\frac{r}{2}$ . Using $|\xi| \le |\tau|$  this implies
$$
I  \lesssim |\xi|^{\half} ||\tau|-|\xi||^{\half} \tau^{-\half} ||\tau|-|\xi||^{-\half}\le  1 \, .$$
{\bf Hyperbolic case.} We start with the following bound (cf. \cite{FK}, Lemma 13.2):
$$  |q_{12}(\eta,\xi-\eta)| \le \frac{|\eta_1 (\xi - \eta)_2 - \eta_2 (\xi-\eta)_1|}{|\eta| \, |\xi - \eta|}  \lesssim \frac{|\xi|^{\half} (|\xi|-||\eta|-|\eta-\xi||)^{\half}}{|\eta|^{\half} |\xi-\eta|^{\half}} \, , $$
so that similarly as in the elliptic case we have to estimate
$$ I = |\xi|^{\half} ||\tau|-|\xi||^{\half} \left( \int \delta(\tau - |\eta| + |\xi - \eta|) \, |\eta|^{-1-\frac{r}{2}} |\xi - \eta|^{-\frac{r}{2}} d\eta \right)^{\frac{1}{r}} \, . $$
In the subcase $|\eta|+|\xi-\eta| \le 2|\xi|$ we apply \cite{FK}, Prop. 4.5 and obtain
$$\int \delta(\tau - |\eta| + |\xi - \eta|) \, |\eta|^{-1-\frac{r}{2}} |\xi - \eta|^{-\frac{r}{2}} d\eta \sim |\xi|^A ||\xi|-|\tau||^B \, . $$
where in the subcase $0 \le \tau \le |\xi|$ we obtain $A=\max(\frac{r}{2},\frac{3}{2}) - 1-r = \half -r$ and $B= 1- \max(\frac{r}{2},\frac{3}{2})= -\frac{1}{2}$. \\
This implies
$$I \lesssim |\xi|^{\half} ||\tau|-|\xi||^{\half} |\xi|^{\frac{1}{2r}-1} ||\tau|-|\xi||^{-\frac{1}{2r}} \lesssim  1 \, . $$
Similarly in the subcase $-|\xi| \le \tau \le 0$ we obtain $A=\max(1+\frac{r}{2},\frac{3}{2})-1-r = - \frac{r}{2}$ , $B= 1 - \max(1+\frac{r}{2},\frac{3}{2}) = -\frac{r}{2} \, ,$ which implies
$$ I \sim |\xi|^{\half} ||\tau|-|\xi||^{\half} |\xi|^{-\half} ||\tau|-|\xi||^{-\half} = 1 \, .$$
In the subcase $|\eta| + |\xi-\eta| \ge 2|\xi|$ we obtain by \cite{FK}, Lemma 4.4:
 \begin{align*}
&\int \delta(\tau - |\eta| + |\xi - \eta|) \, |\eta|^{-1-\frac{r}{2}} |\xi - \eta|^{-\frac{r}{2}} d\eta \\
&\sim ||\tau|-|\xi||^{-\half} ||\tau|+|\xi||^{-\half}\int_2^{\infty} (|\xi|x+\tau)^{-\frac{r}{2}} (|\xi|x-\tau)^{1-\frac{r}{2}}(x^2-1)^{-\half} dx \\
&\sim  ||\tau|-|\xi||^{-\half} ||\tau|+|\xi||^{-\half} \int_2^{\infty} (x+\frac{\tau}{|\xi|})^{-\frac{r}{2}} (x-\frac{\tau}{|\xi|})^{1-\frac{r}{2}} (x^2-1)^{-\half} dx \, \cdot|\xi|^{1-r} \, .
\end{align*}
We remark that in fact the lower limit of the integral can be chosen as 2 by inspection of the proof in \cite{FK}.
The integral converges, because $|\tau| \le |\xi|$ and $r > 1.$  This implies the bound
$$ I \lesssim |\xi|^{\half} ||\tau|-|\xi||^{\half-\frac{1}{2r}}||\tau|+|\xi||^{-\frac{1}{2r}} |\xi|^{\frac{1}{r}-1} \lesssim  1 \, . $$
Summarizing we obtain
$$\|q_{12}(u,v)\|_{H^r_{0,0}} \lesssim \|D^{\frac{1}{r}} u_0^{\pm_1}\|_{L^{r'}}  \| v_0^{\pm_2}\|_{L^{r'}} \, . $$
By the transfer principle Prop. \ref{Prop.0.1} we obtain the claimed result. 
\end{proof}

In a similar manner we can also estimate the nullform $q_{0j}(u,v)$ .
\begin{lemma}
\label{Lemma5.2}
Assume $0 \le \alpha_1,\alpha_2 $ ,  $\alpha_1+\alpha_2 \ge \frac{1}{r}$ and $ b > \frac{1}{r}$ . The following estimate applies
$$ \|q_{0j}(u,v)\|_{H^r_{0,0}} \lesssim \|u\|_{X^r_{\alpha_1,b,\pm_1}} \|v\|_{X^r_{\alpha_2,b,\pm_2}} \, . $$
\end{lemma}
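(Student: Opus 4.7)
The proof will follow the structure of Lemma \ref{Lemma5.1} essentially verbatim, the only new ingredient being a sharp null bound for the symbol of $q_{0j}$. As in Lemma \ref{Lemma5.1}, the use of inhomogeneous norms plus interpolation reduces matters to $\alpha_1 = \tfrac{1}{r}$, $\alpha_2 = 0$, and the transfer principle (Prop. \ref{Prop.0.1}) reduces the claim to the free-wave bilinear estimate with $u = e^{\pm_1 itD}u_0^{\pm_1}$, $v=e^{\pm_2 itD}v_0^{\pm_2}$. By symmetry it suffices to consider the elliptic case $\pm_1=\pm_2=+$ and the hyperbolic case $\pm_1=+$, $\pm_2=-$.

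The key computation is the symbol bound. On the characteristic cones the symbol equals
\[
q_{0j}^{\pm_1\pm_2}(\eta,\xi-\eta) \;=\; \pm_1\frac{(\xi-\eta)_j}{|\xi-\eta|} \mp_2 \frac{\eta_j}{|\eta|},
\]
so with $\theta=\angle(\eta,\xi-\eta)$ one has $|q_{0j}^{\pm\pm}|^2\le 2(1\mp\cos\theta)$. Using $|\xi|^2 = |\eta|^2+|\xi-\eta|^2+2|\eta||\xi-\eta|\cos\theta$ this gives
\[
|q_{0j}^{++}|^2 \;\le\; \frac{(|\eta|+|\xi-\eta|)^2-|\xi|^2}{|\eta||\xi-\eta|}, \qquad |q_{0j}^{+-}|^2 \;\le\; \frac{|\xi|^2-(|\eta|-|\xi-\eta|)^2}{|\eta||\xi-\eta|}.
\]
Restricting to the support of the $\delta$-function (so $\tau = |\eta|+|\xi-\eta|$ in the elliptic case, while $\tau=|\eta|-|\xi-\eta|$ and $|\tau|\le|\xi|$ in the hyperbolic case) one obtains
\[
|q_{0j}^{++}| \;\lesssim\; \frac{\tau^{1/2}(\tau-|\xi|)^{1/2}}{|\eta|^{1/2}|\xi-\eta|^{1/2}}, \qquad |q_{0j}^{+-}| \;\lesssim\; \frac{|\xi|^{1/2}(|\xi|-|\tau|)^{1/2}}{|\eta|^{1/2}|\xi-\eta|^{1/2}}.
\]

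With these bounds, the Hölder step in the proof of Lemma \ref{Lemma5.1} transfers line by line and reduces the claim to $\sup_{\tau,\xi} I \lesssim 1$, where $I$ has precisely the same form as in Lemma \ref{Lemma5.1} except that in the elliptic case the prefactor $|\xi|^{1/2}$ is replaced by $\tau^{1/2}$. Since \cite{FK}, Lemma 4.3 still produces the inner integral $\sim \tau^{-r/2}||\tau|-|\xi||^{-r/2}$ in the elliptic case, and $|\xi|\le\tau$ there, the same computation yields $I\lesssim 1$. In the hyperbolic case the symbol bound for $q_{0j}^{+-}$ coincides with the one used for $q_{12}$ in Lemma \ref{Lemma5.1}, so the two subcases $|\eta|+|\xi-\eta|\lessgtr 2|\xi|$ (treated via \cite{FK}, Prop. 4.5 and Lemma 4.4 respectively) go through without change. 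The only substantive new step is the sharp symbol estimate above; it is the only place where $q_{0j}$ differs from $q_{12}$, and since the elliptic-case discrepancy $|\xi|^{1/2}\leadsto \tau^{1/2}$ is absorbed by $|\xi|\le\tau$, no real obstacle arises.
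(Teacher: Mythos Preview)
Your proof is correct and follows essentially the same strategy as the paper: reduce to $\alpha_1=\tfrac1r$, $\alpha_2=0$ by interpolation, pass to free waves via the transfer principle, and bound $\sup_{\tau,\xi} I$ using the convolution integrals of \cite{FK}. In the hyperbolic case your symbol bound and the subsequent treatment (subcases via \cite{FK}, Prop.~4.5 and Lemma~4.4) coincide exactly with the paper's.

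The one genuine difference is in the elliptic case. The paper quotes from \cite{FK}, Lemma~13.2 the bound
\[
|q_{0j}(\eta,\xi-\eta)| \;\lesssim\; \frac{(|\eta|+|\xi-\eta|-|\xi|)^{1/2}}{\min(|\eta|,|\xi-\eta|)^{1/2}},
\]
which forces a split into the two subcases $|\eta|\lessgtr|\xi-\eta|$ and leads to two different integrals (one of which produces a harmless logarithm). Your bound $|q_{0j}^{++}|\lesssim \tau^{1/2}(\tau-|\xi|)^{1/2}|\eta|^{-1/2}|\xi-\eta|^{-1/2}$ is equivalent (since $\tau\sim\max(|\eta|,|\xi-\eta|)$), but its symmetric denominator lets you reuse verbatim the integral already computed in Lemma~\ref{Lemma5.1}, so no case split is needed and the prefactor change $|\xi|^{1/2}\to\tau^{1/2}$ is harmless because the \cite{FK} asymptotic is expressed in powers of $\tau$ anyway. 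This is a mild streamlining of the paper's argument rather than a different method.
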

\begin{proof}
	Again we may  reduce to the case $\alpha_1= \frac{1}{r}$ and $\alpha_2=0$ .
Arguing as in the proof of Lemma \ref{Lemma5.1} we use in the elliptic case the estimate (cf. \cite{FK}, Lemma 13.2):
$$|q_{0j}(\eta,\xi-\eta)| \lesssim \frac{(|\eta|+|\xi-\eta|-|\xi|)^{\half}}{\min(|\eta|^{\half},|\xi-\eta|^{\half})} \, . $$
In the case  $|\eta| \le |\xi-\eta|$ we obtain
\begin{align*}
I &= ||\tau|-|\xi||^{\half} \left( \int \delta(\tau - |\eta| - |\xi - \eta|) \, |\eta|^{-1-\frac{r}{2}} d\eta \right)^{\frac{1}{r}} \\
&\sim ||\tau|-|\xi||^{\half} |\tau|^{\frac{A}{r}}  ||\tau|-|\xi||^{\frac{B}{r}} = 1\, ,
\end{align*}
because $ A=\max(1+\frac{r}{2},\frac{3}{2}) - 1-\frac{r}{2} = 0$ and $B= 1-\max(1+\frac{r}{2},\frac{3}{2})-\frac{r}{2} = -\frac{r}{2}$ . \\
In the case $|\eta| \ge |\xi-\eta|$ we obtain
\begin{align*}
I &= ||\tau|-|\xi||^{\half} \left( \int \delta(\tau - |\eta| - |\xi - \eta|) \, |\eta|^{-1} |\xi-\eta|^{-\frac{r}{2}} d\eta \right)^{\frac{1}{r}} \\
&\sim ||\tau|-|\xi||^{\half} |\tau|^{\frac{A}{r}}  ||\tau|-|\xi||^{\frac{B}{r}} (1+\log \frac{|\tau|}{||\tau|-|\xi||})^{\frac{1}{r}}\, ,
\end{align*}
where  $A= \max(1,\frac{r}{2},\frac{3}{2})-1-\frac{r}{2} = \half-\frac{r}{2}$ and $B=-\half$, so that
$$I \lesssim ||\tau|-|\xi||^{\half} \tau^{\frac{1}{2r}-\half} ||\tau|-|\xi||^{-\frac  {1}{2r}} \lesssim  1 \, .$$
 In the hyperbolic case we obtain by \cite{FK}, Lemma 13.2:
$$|q_{0j}(\eta, \xi-\eta)| \lesssim |\xi|^{\half} \frac{(|\xi|-||\eta|-|\eta-\xi||)^{\half}}{|\eta|^{\half} |\xi-\eta|^{\half}} $$
and argue exactly as in the proof of Lemma \ref{Lemma1}. The proof is completed as before.
\end{proof}

We also need the same result for $q_0(u,v)$ .
\begin{lemma}
\label{Lemma5.3}
Assume $0 \le \alpha_1,\alpha_2 $ , $\alpha_1+\alpha_2 \ge \frac{1}{r}$ and $ b > \frac{1}{r}$ . The following estimate applies
$$ \|q_0(u,v)\|_{H^r_{0,0}} \lesssim \|u\|_{X^r_{\alpha_1,b,\pm_1}} \|v\|_{X^r_{\alpha_2,b,\pm_2}} \, . $$
\end{lemma}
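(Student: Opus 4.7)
My plan is to follow the strategy of Lemmas \ref{Lemma5.1} and \ref{Lemma5.2} essentially verbatim. First, since the norms are inhomogeneous and we may take $\alpha_1+\alpha_2=\frac{1}{r}$, interpolation reduces the claim to the endpoint $\alpha_1=\frac{1}{r}$, $\alpha_2=0$. The transfer principle (Prop.~\ref{Prop.0.1}) then further reduces the problem to the free wave case $u(t,x)=e^{\pm_1 itD}u_0(x)$, $v(t,x)=e^{\pm_2 itD}v_0(x)$, for which the estimate becomes
$$\|q_0(u,v)\|_{H^r_{0,0}}\lesssim \|D^{1/r}u_0^{\pm_1}\|_{L^{r'}}\|v_0^{\pm_2}\|_{L^{r'}}.$$

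The key input is a pointwise bound on the symbol of $q_0$ on the characteristic surface $\tau=\mp_1|\eta|\mp_2|\xi-\eta|$. A direct computation using $|\xi|^2=|\eta|^2+|\xi-\eta|^2+2\eta\cdot(\xi-\eta)$ shows, in both the elliptic and hyperbolic cases,
$$|q_0(\eta,\xi-\eta)|=\frac{||\tau|-|\xi||\,(|\tau|+|\xi|)}{2|\eta||\xi-\eta|}\lesssim\frac{||\tau|-|\xi||}{\min(|\eta|,|\xi-\eta|)},$$
where the last inequality uses $|\tau|+|\xi|\lesssim\max(|\eta|,|\xi-\eta|)$, a consequence of $|\tau|,|\xi|\le|\eta|+|\xi-\eta|$. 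Inserting this bound and applying H\"older's inequality as in Lemma \ref{Lemma5.1}, the problem reduces to showing $\sup_{\tau,\xi}I\lesssim 1$, where
$$I=||\tau|-|\xi||\Bigl(\int\delta(\tau\mp_1|\eta|\mp_2|\xi-\eta|)\,\min(|\eta|,|\xi-\eta|)^{-r}\,|\eta|^{-1}\,d\eta\Bigr)^{1/r}.$$

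I then split the $\eta$-integral by whether $|\eta|\le|\xi-\eta|$ or $|\eta|\ge|\xi-\eta|$, so that the weight becomes $|\eta|^{-1-r}$ or $|\eta|^{-1}|\xi-\eta|^{-r}$, respectively. In the elliptic case ($\pm_1=\pm_2$) I apply \cite{FK}, Lemma 4.3; in the hyperbolic case ($\pm_1\neq\pm_2$) I apply \cite{FK}, Prop.~4.5 on the subregion $|\eta|+|\xi-\eta|\le 2|\xi|$ and \cite{FK}, Lemma 4.4 on the complementary subregion, exactly as in Lemmas \ref{Lemma5.1} and \ref{Lemma5.2}. In each of the resulting subcases a brief computation of the exponents $A,B$, together with $||\tau|-|\xi||\le|\tau|$, yields the bound $I\lesssim 1$. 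The step I expect to require the most care is the hyperbolic subregion $|\eta|+|\xi-\eta|\ge 2|\xi|$, where convergence of the angular integral in \cite{FK}, Lemma 4.4 depends crucially on $r>1$; this is handled exactly as at the end of the proof of Lemma \ref{Lemma5.1}.
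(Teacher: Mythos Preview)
Your proposal is correct and follows essentially the same route as the paper: reduce to free waves via the transfer principle, bound the $q_0$ symbol on the characteristic surface (the paper cites \cite{FK}, Lemma 13.2 rather than computing it directly, but your identity $|q_0|=\tfrac{||\tau|-|\xi||(|\tau|+|\xi|)}{2|\eta||\xi-\eta|}$ is precisely what underlies that lemma and yields the same bound), then apply H\"older and the \cite{FK} integral asymptotics in the elliptic and hyperbolic regimes. One small correction: in the hyperbolic subcases the inequality you need is $||\tau|-|\xi||\le|\xi|$ (since $|\tau|\le|\xi|$ there), not $||\tau|-|\xi||\le|\tau|$, which holds only in the elliptic case.
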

\begin{proof}
	As before we reduce to the case $\alpha_1= \frac{1}{r}$ and $\alpha_2 =0$ .
We use in the elliptic case the estimate (cf. \cite{FK}, Lemma 13.2):
$$|q_{0}(\eta,\xi-\eta)| \lesssim \frac{|\eta|+|\xi-\eta|-|\xi|}{\min(|\eta|,|\xi-\eta|)} \, . $$
In the case $|\eta| \le |\xi-\eta|$ we have to estimate
\begin{align*}
I &= ||\tau|-|\xi|| \left( \int \delta(\tau - |\eta| - |\xi - \eta|) \, |\eta|^{-1-r}  d\eta \right)^{\frac{1}{r}} \\
&\sim ||\tau|-|\xi||  |\tau|^{\frac{A}{r}}  ||\tau|-|\xi||^{\frac{B}{r}}= 1 \, ,
\end{align*}
because $A=\max(1+r,\frac{3}{2})-1-r=0$ and $B=1-\max(1+r,\frac{3}{2})= -r$ . \\
In the case  $|\eta| \ge |\xi-\eta|$ we obtain
\begin{align*}
I &= ||\tau|-|\xi|| \left( \int \delta(\tau - |\eta| - |\xi - \eta|) \, |\eta|^{-1}  |\xi-\eta|^{-r} d\eta \right)^{\frac{1}{r}} \\
&\sim ||\tau|-|\xi||  |\tau|^{\frac{A}{r}}  ||\tau|-|\xi||^{\frac{B}{r}} \lesssim 1  \, ,
\end{align*} 
  because $A=\max(1,r,\frac{3}{2})-1-r= \half - r$ , $B=1-\frac{3}{2}=-\half$ and  $|\xi| \le |\tau|$ .
  
In the hyperbolic case we obtain by \cite{FK}, Lemma 13.2:
$$|q_{0j}(\eta, \xi-\eta)| \lesssim |\xi|\frac{|\xi|-||\eta|-|\eta-\xi||}{|\eta|\, |\xi-\eta|} \, .$$
In the subcase $|\eta|+|\xi-\eta| \le 2|\xi|$ we apply \cite{FK}, Prop. 4.5 and obtain
$$\int \delta(\tau - |\eta| + |\xi - \eta|) \, |\eta|^{-1-r} |\xi - \eta|^{-r} d\eta \sim |\xi|^A ||\xi|-|\tau||^B \ $$
where in the subcase $0 \le \tau \le |\xi|$ :  $A=\max(r,\frac{3}{2})-1-2r=\half-2r$ , $B=1-\max(r,\frac{3}{2})=-\half$ , so that
$$I \lesssim |\xi| \,||\tau|-|\xi|| \,|\xi|^{\frac{1}{2r}-2} ||\tau|-|\xi|^{-\frac{1}{2r}} \lesssim 1 \, , $$
whereas in the subcase $-|\xi| \le \tau \le 0$ we obtain $A=\max(r,\frac{3}{2})-1-2r = -r$ , $B=1-\max(1+r,\frac{3}{2}) = -r $ , so that $I \sim 1$ .

In the subcase $|\eta| + |\xi-\eta| \ge 2|\xi|$ we obtain by \cite{FK}, Lemma 4.4:
 \begin{align*}
&\int \delta(\tau - |\eta| + |\xi - \eta|) \, |\eta|^{-1-r} |\xi - \eta|^{-r} d\eta \\
&\sim ||\tau|-|\xi||^{-\half} ||\tau|+|\xi||^{-\half}\int_2^{\infty} (x+\frac{\tau}{|\xi|})^{-r} (x-\frac{\tau}{|\xi|})^{1-r} (x^2-1)^{-\half}dx \, \cdot|\xi|^{1-2r} \, .
\end{align*}
The integral converges, because $|\tau| \le |\xi|$ . This implies the bound
$$ I \lesssim |\xi| \,||\tau|-|\xi||\,||\tau|-|\xi||^{-\frac{1}{2r}} ||\tau|+|\xi||^{-\frac{1}{2r}} |\xi|^{\frac{1}{r}-2} \lesssim |\xi|^{\frac{1}{2r}-1} |\tau|-|\xi||^{1-\frac{1}{2r}} \lesssim 1 \, . $$
The proof is completed as the proof of Lemma \ref{Lemma1}.
\end{proof}

\begin{lemma}
\label{Lemma4}
Let $1 < r \le 2$ .
Assume $ \alpha_1,\alpha_2 \ge 0$ ,  $\alpha_1+\alpha_2 >\frac{3}{2r}$, $b_1 , b_2 > \frac{1}{2r}$, $b_1+b_2 > \frac{3}{2r}$. Then the following estimate applies:
$$\|uv\|_{H^r_{0,0}} \lesssim \|u\|_{X^r_{\alpha_1,b_1,\pm_1}} \|v\|_{X^r_{\alpha_2,b_2,\pm_2}} \, . $$
\end{lemma}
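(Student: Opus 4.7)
The strategy parallels the proofs of Lemmas \ref{Lemma5.1}--\ref{Lemma5.3}, but without a null-form factor the required spatial regularity is raised from $1/r$ to $3/(2r)$. Write $\tilde u(\lambda,\eta) = G_1(\lambda,\eta)/W_1(\lambda,\eta)$ and $\tilde v(\mu,\zeta) = G_2(\mu,\zeta)/W_2(\mu,\zeta)$ with weights $W_i(\tau,\xi) = \langle\xi\rangle^{\alpha_i}\langle\tau\pm_i|\xi|\rangle^{b_i}$, so that $\|G_i\|_{L^{r'}} = \|u_i\|_{X^r_{\alpha_i,b_i,\pm_i}}$. Then $\widehat{uv}(\tau,\xi) = (\tilde u * \tilde v)(\tau,\xi)$, and H\"older's inequality in $(\lambda,\eta)$ with dual exponents $(r,r')$ applied inside the convolution gives
$$|\widehat{uv}(\tau,\xi)|^{r'} \le K(\tau,\xi)^{r'/r}\int |G_1(\lambda,\eta)|^{r'}|G_2(\tau-\lambda,\xi-\eta)|^{r'}\, d\lambda\, d\eta,$$
where $K(\tau,\xi) := \int [W_1(\lambda,\eta)\,W_2(\tau-\lambda,\xi-\eta)]^{-r}\, d\lambda\, d\eta$. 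Integrating in $(\tau,\xi)$ and using Fubini on the $G$-factor produces $\|G_1\|_{L^{r'}}^{r'}\|G_2\|_{L^{r'}}^{r'}$, so the claim reduces to $\sup_{\tau,\xi} K(\tau,\xi) \lesssim 1$.

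Next I would integrate $\lambda$ first via the elementary convolution bound $\int \langle\lambda-a\rangle^{-p}\langle\lambda-b\rangle^{-q}\, d\lambda \lesssim \langle a-b\rangle^{-\min(p,q,p+q-1)}$ (valid for $p,q>0$, $p+q>1$). The assumptions $rb_1, rb_2 > 1/2$ and $r(b_1+b_2) > 3/2$ give an effective exponent $c := \min(rb_1, rb_2, r(b_1+b_2)-1) > 1/2$, hence
$$K(\tau,\xi) \lesssim \int_{\R^2}\frac{d\eta}{\langle\eta\rangle^{r\alpha_1}\langle\xi-\eta\rangle^{r\alpha_2}\langle\tau\pm_1|\eta|\pm_2|\xi-\eta|\rangle^{c}}.$$

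It remains to bound this two-dimensional cone integral uniformly. Splitting into the elliptic case $\pm_1=\pm_2$ (where the phase vanishes on the ellipse $|\eta|+|\xi-\eta|=\tau$ for $\tau\ge|\xi|$) and the hyperbolic case $\pm_1\ne\pm_2$, a dyadic decomposition $|\tau\pm_1|\eta|\pm_2|\xi-\eta||\sim 2^m$ reduces each shell to a two-dimensional Foschi--Klainerman surface integral, identical in form to those invoked in the proofs of Lemmas \ref{Lemma5.1}--\ref{Lemma5.3}. The resulting surface contributions behave like $(\tau-\mu)^A\,||\tau-\mu|-|\xi||^B$ with $A+B = 1 - r(\alpha_1+\alpha_2) < -1/2$ by virtue of $\alpha_1+\alpha_2 > 3/(2r)$.

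The main obstacle is coordinating the dyadic summation so that the decay $c>1/2$ from modulation balances the singular shell contributions when $||\tau|-|\xi||\to 0$; individual shells can have $B<0$ large enough to blow up, so one must carry out the $\mu$-integration together with the $m$-summation and use both sum conditions at once. The hypotheses $b_1+b_2 > 3/(2r)$ and $\alpha_1+\alpha_2 > 3/(2r)$ are precisely tight enough to make the resulting joint integral/sum finite uniformly in $(\tau,\xi)$, reflecting the underlying Sobolev scaling $\alpha_1+\alpha_2+b_1+b_2 > 3/r$ in $\R^{1+2}$.
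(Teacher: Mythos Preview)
The paper's proof is a one-line citation: the estimate follows from \cite{GT}, Prop.~3.1, by summing the dyadic pieces. Your approach is a direct attempt in the style of Lemmas~\ref{Lemma5.1}--\ref{Lemma5.3}, reducing via H\"older to $\sup_{\tau,\xi} K(\tau,\xi) < \infty$. Unfortunately this reduction is too crude and the supremum is in fact infinite under the stated hypotheses.

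Concretely, in the hyperbolic case (opposite signs) the phase after the $\lambda$-integration is $\tau \mp_1|\eta| \pm_2|\xi-\eta|$ with opposite signs on the two moduli. As $|\eta|\to\infty$ one has $|\xi-\eta|-|\eta| \to -\hat\eta\cdot\xi$, so the phase tends to the bounded quantity $\tau-\hat\eta\cdot\xi$ and the factor $\langle\text{phase}\rangle^{-c}$ contributes \emph{no} decay at spatial infinity. Hence for any fixed $(\tau,\xi)$,
\[
\int_{\R^2}\frac{d\eta}{\langle\eta\rangle^{r\alpha_1}\langle\xi-\eta\rangle^{r\alpha_2}\langle\text{phase}\rangle^{c}}
\;\gtrsim\; \langle|\tau|+|\xi|\rangle^{-c}\int_{|\eta|>R}|\eta|^{-r(\alpha_1+\alpha_2)}\,d\eta,
\]
and the right-hand side diverges unless $r(\alpha_1+\alpha_2)>2$, i.e.\ $\alpha_1+\alpha_2>2/r$. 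The lemma, however, only assumes $\alpha_1+\alpha_2>3/(2r)$; for instance with $r=2$, $\alpha_1=\alpha_2=3/8+\epsilon$, $b_1=b_2=3/8+\epsilon$ all the hypotheses hold, $c=\tfrac12+$, yet $K(\tau,\xi)=+\infty$ for every $(\tau,\xi)$. Your ``main obstacle'' paragraph worries about coordinating the dyadic sum near $||\tau|-|\xi||=0$, but the actual failure is much more basic: the kernel you need to bound diverges pointwise at large $|\eta|$, and no resummation can repair that.

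The H\"older step throws away the resonance/orthogonality structure of the product: high--high input frequencies with $|\eta|\sim|\xi-\eta|\gg|\xi|$ do contribute to $\widehat{uv}(\tau,\xi)$, but these contributions are almost orthogonal across different $(\tau,\xi)$, which is invisible after you bound by $\sup K$. The route taken in \cite{GT} (and hence in the paper) performs a Littlewood--Paley decomposition of $u$ and $v$ first, estimates each dyadic interaction via bilinear free-solution bounds and the transfer principle, and then sums; this keeps track of the high--high to low cancellation that your argument discards.
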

\begin{proof}
This follows from \cite{GT}, Prop. 3.1 by summation over the dyadic parts.
\end{proof}

\begin{lemma}
\label{Lemma5}
If $\alpha_1,\alpha_2,b_1,b_2 \ge 0$ , $\alpha_1+\alpha_2 > \frac{2}{r}$ and $b_1+b_2 > \frac{1}{r}$ the following estimate applies:
$$ \|uv\|_{H^r_{0,0}} \lesssim \|u\|_{H^r_{\alpha_1,b_1}} \|v\|_{H^r_{\alpha_2,b_2}} \, . $$
\end{lemma}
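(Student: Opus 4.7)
The plan is to reduce the inequality, on the Fourier side, to the weighted convolution estimate
\[
\|\tilde u * \tilde v\|_{L^{r'}_{\tau,\xi}} \lesssim \|F_1\|_{L^{r'}}\|F_2\|_{L^{r'}},
\]
where $F_j := \langle\xi\rangle^{\alpha_j}\langle|\tau|-|\xi|\rangle^{b_j}|\tilde u_j|$ with $u_1=u$, $u_2=v$, and $|\tilde u_j| = w_j F_j$ with $w_j = \langle\xi\rangle^{-\alpha_j}\langle|\tau|-|\xi|\rangle^{-b_j}$. First I would apply Young's convolution inequality combined with H\"older; this succeeds when one can pick $s_1,s_2$ with $\frac{1}{s_1}+\frac{1}{s_2}=\frac{1}{r}$ and $\frac{1}{s_j}<\min(\alpha_j/2, b_j)$ (the condition $w_j \in L^{s_j}$). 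The direct choice $s_1 = s_2 = 2r$ handles the symmetric regime $\alpha_j > 1/r$, $b_j > 1/(2r)$, but does not cover the full hypothesis.

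To cover the remaining range, I would prove four endpoint estimates and interpolate. The two \emph{diagonal} endpoints, where $(\alpha_1 > 2/r, b_1 > 1/r)$ or $(\alpha_2 > 2/r, b_2 > 1/r)$ with the other pair zero, I would get directly from scalar Young, $\|\tilde u * \tilde v\|_{L^{r'}} \le \|\tilde u\|_{L^1}\|\tilde v\|_{L^{r'}}$, together with H\"older $\|\tilde u\|_{L^1}\le\|w_1\|_{L^r}\|F_1\|_{L^{r'}}$, since $\|w_1\|_{L^r}<\infty$ iff $r\alpha_1 > 2$ and $rb_1 > 1$. The two \emph{anti-diagonal} endpoints, where $(\alpha_1 > 2/r, b_2 > 1/r)$ or $(\alpha_2 > 2/r, b_1 > 1/r)$ with the other pair zero, I would obtain from the mixed-norm Young inequality
\[
\|\tilde u * \tilde v\|_{L^{r'}_{\tau,\xi}}
\le \|\tilde u\|_{L^1_\xi L^{r'}_\tau}\,\|\tilde v\|_{L^{r'}_\xi L^1_\tau},
\]
followed by a one-variable H\"older in $\xi$ for $\tilde u$ (using $\|\langle\xi\rangle^{-\alpha_1}\|_{L^r_\xi}<\infty$ when $\alpha_1 > 2/r$), giving $\|\tilde u\|_{L^1_\xi L^{r'}_\tau}\lesssim\|u\|_{H^r_{\alpha_1,0}}$, and in $\tau$ for $\tilde v$ (using $\|\langle|\tau|-|\xi|\rangle^{-b_2}\|_{L^r_\tau}<\infty$ uniformly in $\xi$ when $b_2 > 1/r$), giving $\|\tilde v\|_{L^{r'}_\xi L^1_\tau}\lesssim\|v\|_{H^r_{0,b_2}}$.

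These four endpoints sit at the vertices of the rectangle $[0,\,2/r+\epsilon]\times[0,\,1/r+\epsilon]$ in the $(\alpha_1,b_1)$-plane (with $\alpha_2,b_2$ determined by the sum constraints $\alpha_1+\alpha_2=2/r+\epsilon$, $b_1+b_2=1/r+\epsilon$). Since $H^r_{\alpha,b}$ depends affinely on $(\alpha,b)$ under the complex method, two successive applications of Calder\'on's bilinear complex interpolation would fill the interior of the rectangle, and monotonicity of the norm in $\alpha$ and $b$ then extends the estimate to arbitrary $\alpha_j, b_j \ge 0$ with $\alpha_1+\alpha_2 > 2/r$, $b_1+b_2 > 1/r$.

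The main obstacle will be setting up the anti-diagonal endpoints: the mixed-norm Young's inequality together with a one-dimensional H\"older in the correct variable must be arranged so that the weight from one factor is absorbed exactly by the Lebesgue norm in that variable of the other. The subsequent bilinear interpolation should then be routine since the $H^r_{\alpha,b}$ are weighted $L^{r'}$-spaces of Fourier transforms.
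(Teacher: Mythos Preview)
Your proposal is correct and follows essentially the same strategy as the paper: reduce by monotonicity and interpolation to extreme allocations of the weights, then prove the endpoint bounds on the Fourier side via Young's inequality plus H\"older. The paper's proof is in fact more terse than yours: it only writes out the diagonal endpoint $(\alpha_1,\alpha_2,b_1,b_2)=(\tfrac{2}{r}+,0,\tfrac{1}{r}+,0)$, using $\|\widehat{uv}\|_{L^{r'}}\le\|\widehat u\|_{L^1}\|\widehat v\|_{L^{r'}}$ together with $\|\langle\xi\rangle^{-2/r-}\langle|\tau|-|\xi|\rangle^{-1/r-}\|_{L^r}<\infty$, and then dismisses the anti-diagonal case $(\tfrac{2}{r}+,0,0,\tfrac{1}{r}+)$ with the words ``or similarly''. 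Your observation that this second case is \emph{not} handled by the same scalar Young step (since neither factor alone lies in $L^1_{\tau,\xi}$) and instead requires the mixed-norm Young inequality $\|\tilde u*\tilde v\|_{L^{r'}_{\tau,\xi}}\le\|\tilde u\|_{L^1_\xi L^{r'}_\tau}\|\tilde v\|_{L^{r'}_\xi L^1_\tau}$ is well taken; your argument for that endpoint is the right way to justify what the paper leaves implicit. The subsequent bilinear complex interpolation over the rectangle in $(\alpha_1,b_1)$ is routine, exactly as you say, because the $H^r_{\alpha,b}$ are weighted $L^{r'}$ spaces.
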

\begin{proof}
	We may assume $\alpha_1 = \frac{2}{r}+$ , $\alpha_2 =0$ , $b_1=\frac{1}{r}+$ , $b_2 =0$ (or similarly $b_1 =0$ , $b_2=\frac{1}{r}+$).
By Young's and H\"older's inequalities we obtain
\begin{align*}
\|uv\|_{H^r_{0,0}} &= \|\widehat{uv}\|_{L^{r'}_{\tau \xi}} \lesssim \|\widehat{u}\|_{L^{1}_{\tau \xi}}  \|\widehat{v}\|_{L^{r'}_{\tau\xi}}  \\
	& \lesssim \| \langle \xi \rangle^{-\frac{2}{r}-} \langle|\tau|-|\xi|\rangle^{-\frac{1}{r}-}\|_{L^r_{\tau\xi}}  \| \langle \xi \rangle^{\frac{2}{r}+} \langle|\tau|-|\xi|\rangle^{\frac{1}{r}+} \widehat{u}\|_{L^{r'}_{ \tau \xi}} \,  \|\widehat{v}\|_{L^{r'}_{\tau \xi}} \\
		& \lesssim \|u\|_{H^r_{\frac{2}{r}+,\frac{1}{r}+}} \|v\|_{H^r_{0,0}} \, .
		\end{align*}

\end{proof}

\begin{lemma}
\label{Lemma6}
Let $1 < r \le 2$ , $0 \le \alpha_1,\alpha_2$ and  $\alpha_1+\alpha_2 \ge \frac{1}{r}+b$ , $b >\frac{1}{r}$ . Then the following estimate applies:
$$ \|uv\|_{H^r_{0,b}} \lesssim
 \|u\|_{X^r_{\alpha_1,b, \pm_1}} \|v\|_{X^r_{\alpha_2,b,\pm_2}} \, . $$
\end{lemma}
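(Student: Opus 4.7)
The task is to distribute the weight $\langle|\tau|-|\xi|\rangle^b$ between the two convolution factors of $\widehat{u}\ast\widehat{v}$, reducing everything to the already-established $H^r_{0,0}$-product estimates of Lemmas \ref{Lemma4} and \ref{Lemma5}. Writing $\sigma_1 = \lambda\pm_1|\eta|$ and $\sigma_2 = (\tau-\lambda)\pm_2|\xi-\eta|$, addition gives, for an appropriate choice of sign $\varepsilon\in\{+,-\}$ (depending on $\pm_1,\pm_2$ and on the sign of $\tau$),
$$\tau + \varepsilon|\xi| = \sigma_1 + \sigma_2 + \mathcal{N}_{\pm_1\pm_2}^{\varepsilon}, \qquad \mathcal{N}_{\pm_1\pm_2}^{\varepsilon} := \varepsilon|\xi| \mp_1 |\eta| \mp_2 |\xi-\eta|.$$
Since $\langle|\tau|-|\xi|\rangle = \min(\langle\tau-|\xi|\rangle,\langle\tau+|\xi|\rangle)$, choosing the better sign yields the pointwise inequality
$$\langle|\tau|-|\xi|\rangle^{b} \lesssim \langle\sigma_1\rangle^{b} + \langle\sigma_2\rangle^{b} + \langle\mathcal{N}\rangle^{b},$$
which produces three contributions to bound separately. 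The same pointwise inequality shows $X^r_{\alpha,b,\pm}\hookrightarrow H^r_{\alpha,b}$, a fact I will use repeatedly.

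For the $\langle\sigma_1\rangle^{b}$ piece, absorbing the factor into $\widehat{u}$ replaces $u$ by $\Lambda_{-}^{b}u$, which lies in $X^r_{\alpha_1,0,\pm_1}=H^r_{\alpha_1,0}$ with norm equal to $\|u\|_{X^r_{\alpha_1,b,\pm_1}}$. Combined with the embedding $v\in X^r_{\alpha_2,b,\pm_2}\hookrightarrow H^r_{\alpha_2,b}$, Lemma \ref{Lemma5} applies: its hypotheses $\alpha_1+\alpha_2>\tfrac{2}{r}$ and $0+b>\tfrac{1}{r}$ are both implied by our assumptions $\alpha_1+\alpha_2\ge\tfrac{1}{r}+b$ and $b>\tfrac{1}{r}$. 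The $\langle\sigma_2\rangle^{b}$ piece is handled symmetrically.

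The delicate piece is the $\langle\mathcal{N}\rangle^{b}$ term, which depends only on the spatial frequencies $\eta,\xi-\eta,\xi$. In every sign case an elementary triangle-inequality argument gives $|\mathcal{N}|\lesssim\min(|\eta|,|\xi-\eta|,|\xi|)$, so that $\langle\mathcal{N}\rangle^{b}$ can be absorbed into the spatial weight of the factor with smaller frequency after a dyadic decomposition $|\eta|\lessgtr|\xi-\eta|$. The resulting product is of the form $(\Lambda^{b}u)\cdot v$ (or its symmetric counterpart) with $\Lambda^{b}u\in X^r_{\alpha_1-b,b,\pm_1}$, and one applies Lemma \ref{Lemma4}; the extra $b$-regularity present in our hypothesis $\alpha_1+\alpha_2\ge\tfrac{1}{r}+b$ compared to Lemma \ref{Lemma5}'s threshold $\tfrac{2}{r}$ is exactly what this absorption costs.

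The main obstacle is this last step: the naive bound $|\mathcal{N}|\lesssim\min(|\eta|,|\xi-\eta|)$ throws away the Foschi--Klainerman angular gain (essentially the factor $\sin^2(\theta/2)$) that makes $\mathcal{N}$ a bona fide elliptic/hyperbolic ``null-form piece''. One must therefore argue more carefully in the subregion where $\langle\sigma_1\rangle$ and $\langle\sigma_2\rangle$ are both small and $\langle\mathcal{N}\rangle$ dominates: there one exploits that the identity $\tau+\varepsilon|\xi|=\sigma_1+\sigma_2+\mathcal{N}$ forces $\langle\mathcal{N}\rangle\lesssim\langle|\tau|-|\xi|\rangle+\langle\sigma_1\rangle+\langle\sigma_2\rangle$, so that $\mathcal{N}$ can be traded against the hyperbolic weights on either factor (which, being $>\tfrac{1}{r}$, have room to spare) before Lemma \ref{Lemma4} is invoked. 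This balancing between spatial and hyperbolic regularity is the crux of the argument.
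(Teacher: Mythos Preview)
Your decomposition via the hyperbolic Leibniz rule and your treatment of the $\langle\sigma_1\rangle^b$, $\langle\sigma_2\rangle^b$ pieces through Lemma~\ref{Lemma5} are correct and coincide with the paper's argument. The problem is entirely in the $\langle\mathcal{N}\rangle^b$ contribution.

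Your claim that ``the extra $b$-regularity \ldots\ is exactly what this absorption costs'' is off by $\tfrac{1}{2r}$. After the reduction $\alpha_1=\tfrac{1}{r}+b$, $\alpha_2=0$ and the absorption $\langle\mathcal{N}\rangle^b\lesssim\langle\eta\rangle^b$, you are left with the product estimate
\[
\|uv\|_{H^r_{0,0}}\lesssim \|u\|_{X^r_{1/r,b,\pm_1}}\|v\|_{X^r_{0,b,\pm_2}},
\]
and Lemma~\ref{Lemma4} requires $\tfrac{1}{r}+0>\tfrac{3}{2r}$, which is false; Lemma~\ref{Lemma5} needs $\tfrac{1}{r}+0>\tfrac{2}{r}$, which is also false. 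The bound $|\mathcal{N}|\lesssim\min(|\eta|,|\xi-\eta|)$ is simply too crude here. Your proposed repair in the last paragraph is circular: in the region where $\langle\sigma_1\rangle,\langle\sigma_2\rangle$ are both small and $\langle\mathcal{N}\rangle$ dominates, the identity forces $\langle\mathcal{N}\rangle\sim\langle|\tau|-|\xi|\rangle$, so trading $\mathcal{N}$ back for $||\tau|-|\xi||$ just returns you to the original output weight you are trying to control. There is no slack in the hyperbolic weights ``$>\tfrac{1}{r}$'' to exploit in this regime, because both $\sigma_i$ are small.

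The paper closes this gap differently: it treats the $b_\pm$ piece via the transfer principle, reducing to free waves, and then evaluates the integral
\[
I=||\tau|-|\xi||^{\,b}\Bigl(\int \delta(\tau-|\eta|\mp|\xi-\eta|)\,|\eta|^{-1-br}\,d\eta\Bigr)^{1/r}
\]
explicitly via the Foschi--Klainerman formulas (Prop.~4.3, Prop.~4.5, Lemma~4.4 of \cite{FK}) in the elliptic and hyperbolic cases separately. These computations give exactly $I\lesssim 1$, recovering the missing $\tfrac{1}{2r}$ that your absorption argument loses. The point is that on the support of the $\delta$-function the quantity $b_\pm$ equals $||\tau|-|\xi||$ \emph{exactly}, so the $||\tau|-|\xi||^b$ output weight is cancelled by the decay of the integral rather than being absorbed into a spatial weight.
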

\begin{proof}
	We may assume $\alpha_1 = \frac{1}{r}+b$ , $\alpha_2 =0$ .
We apply the "hyperbolic Leibniz rule" (cf. \cite{AFS}, p. 128):
\begin{equation}
\label{HLR}
 ||\tau|-|\xi|| \lesssim ||\rho|-|\eta|| + ||\tau - \rho| - |\xi-\eta|| + b_{\pm}(\xi,\eta) \, , 
\end{equation}
where
 $$ b_+(\xi,\eta) = |\eta| + |\xi-\eta| - |\xi| \quad , \quad b_-(\xi,\eta) = |\xi| - ||\eta|-|\xi-\eta|| \, . $$

Let us first consider the term $b_{\pm}(\xi,\eta)$ in (\ref{HLR}). Decomposing as before $uv=u_+v_++u_+v_-+u_-v_++u_-v_-$ , where $u_{\pm}(t)= e^{\pm itD} f , v_\pm(t) = e^{\pm itD} g$  , we use
$$ \widehat{u}_{\pm}(\tau,\xi) = c \delta(\tau \mp |\xi|) \widehat{f}(\xi) \quad , \quad  \widehat{v}_{\pm}(\tau,\xi) = c \delta(\tau \mp |\xi|) \widehat{g}(\xi)$$
and have to estimate
\begin{align*}
&\| \int  b^{b}_{\pm}(\xi,\eta) \delta(\tau - |\eta| \mp |\xi-\eta|) \widehat{f}(\xi) \widehat{g}(\xi-\eta)d\eta \|_{L^{r'}_{\tau \xi}} \\
& = \| \int ||\tau|-|\xi||^b \delta(\tau - |\eta| \mp |\xi-\eta|) \widehat{f}(\xi) \widehat{g}(\xi-\eta) d\eta \|_{L^{r'}_{\tau \xi}} \\
& \lesssim \sup_{\tau,\xi} I \, \|\widehat{D^{\frac{1}{r}+b} f}\|_{[L^{r'}}
\|\widehat{g}\|_{[L^{r'}} \, .
\end{align*}
Here we used H\"older's inequality, where
$$I = ||\tau|-|\xi||^b  (\int \delta(\tau-|\eta|\mp|\xi-\eta|) |\eta|^{-1-b r} d\eta)^{\frac{1}{r}} \, . $$
In order to obtain $I \lesssim 1$ we first consider the elliptic case $\pm_1=\pm_2=+$ and use \cite{FK}, Prop. 4.3. Thus 
$$ I \sim ||\tau|-|\xi||^b  \tau^{\frac{A}{r}} ||\tau|-|\xi||^{\frac{B}{r}} = ||\tau|-|\xi||^b ||\tau|-|\xi||^{-b} = 1$$
with $A=\max(1+br,\frac{3}{2})-(1+br)=0$ and $B=1-\max(1+br,\frac{3}{2})= -br$ .

Next we consider the hyperbolic case $\pm_1 = + \, , \, \pm_2=-$ . \\
First we assume $|\eta|+|\xi-\eta| \le 2 |\xi|$ and use \cite{FK}, Prop. 4.5 which gives
$$\int \delta(\tau - |\eta| + |\xi-\eta|) |\eta|^{-1-br}  d\eta \sim |\xi|^A ||\xi|-|\tau||^B \, , $$
where $A=\frac{3}{2}-(1+br)= \half-br$ , $B=1-\frac{3}{2}=- \half$ , if $0 \le \tau\le |\xi|$ ,so that
$$I \sim ||\tau|-|\xi||^b |\xi|^{\frac{1}{2r}-b} ||\tau|-|\xi||^{-\frac{1}{2r}} \lesssim 1 \, .$$   
If $-|\xi| \le \tau \le 0$ we obtain $A=\max(1+br,\frac{3}{2})-(1+br)=0$ , $B=1-\max(1+br,2)=-br$ , which implies  $I \lesssim 1$ .\\
Next we assume $|\eta|+|\xi-\eta| \ge 2|\xi|$ , use \cite{FK}, Lemma 4.4 and obtain
\begin{align*}
&I \sim ||\tau|-|\xi||^b (\int \delta(\tau-|\eta|-|\xi-\eta|) |\eta|^{-1-br} d\eta)^{\frac{1}{r}} \\
& \sim  ||\tau|-|\xi||^b (||\tau|-|\xi||^{-\half} ||\tau|+|\xi||^{-\half} \int_2^{\infty} (|\xi|x + \tau)^{-br} (|\xi|x-\tau)(x^2-1)^{-\half} dx)^{\frac{1}{r}} \\
& \sim ||\tau|-|\xi||^b (||\tau|-|\xi||^{-\half} ||\tau|+|\xi||^{-\half}\int_2^{\infty} (x+\frac{\tau}{|\xi|})^{-br} (x-\frac{\tau}{|\xi|}) (x^2-1)^{-\half} dx \,\cdot |\xi|^{1-br})^{\frac{1}{r}}  \, .
\end{align*}
This integral converges, because $\tau \le |\xi|$ and $b >\frac{1}{r}$ .This implies
$$I \lesssim  ||\tau|-|\xi||^{b-\frac{1}{2r}} ||\tau|+|\xi||^{-\frac{1}{2r}} |\xi|^{\frac{1}{r}-b} \lesssim  1 \, , $$
using $|\tau| \le |\xi|$ .

By the transfer principle we obtain
$$\| B_{\pm}^b (u,v)\|_{X^r_{0,0}} \lesssim \|u\|_{X^r_{\frac{1}{r}+b,b,\pm_1}} \|v\|_{X^r_{0,b,\pm_2}} \, .$$ 
 Here $B^b_{\pm}$ denotes the operator with Fourier symbol $b_{\pm}$ . \\
Consider now the term $||\rho|-|\eta||$ (or similarly $||\tau-\rho|-|\xi-\eta||$) in (\ref{HLR}). We have to prove
$$ \|u D_-^b v\|_{H^r_{0,0}} \lesssim \|u\|_{X^r_{\alpha_1,b,\pm_1}} \|v\|_{X^r_{\alpha_2,b,\pm_2}} \, , $$
which is implied by
$$\|uv\|_{H^r_{0,0}} \lesssim \|u\|_{X^r_{\alpha_1,b,\pm_1}} \|v\|_{X^r_{\alpha_2,0,\pm_2}} \, . $$
This results from Lemma \ref{Lemma5}, because $\alpha_1+\alpha_2\ge \frac{1}{r} +b > \frac{2}{r}$ , which completes the proof. 
\end{proof}

\section{ Proof of (\ref{21}) - (\ref{37}) in the case $r=1+$:}

\begin{proof}
The estimates are proven by the results of chapter 5.\\[0.5em]
{\bf Assumption:} $ s > 1+\frac{1}{2r}$ , $l \ge \half$ , $s-1 \le l \le s$ , $2s-l>1+\frac{1}{r}$ , $2l-s+2 > \frac{3}{2r}$ and $b=\frac{1}{r}+$ .  \\[0.5em]
{\bf Proof of (\ref{21}) and (\ref{22}):} This reduces to
$$\|q(u,v)\|_{H^r_{s-1,0}} \lesssim \|u\|_{X^r_{s,b}} \|u\|_{X^r_{s-1,b}} \, . $$
By the fractional Leibniz rule this results from Lemma \ref{Lemma5.1} or Lemma \ref{Lemma5.2} for $ s > \frac{1}{r}$. \\ 
{\bf Proof of (\ref{23}):} 
This reduces to
$$\|q(u,v)\|_{H^r_{l-1,0}} \lesssim \|u\|_{X^r_{s,b}} \|v\|_{X^r_{l-1,b}} \, . $$
Let us first consider the case $|\xi-\eta| \le 1$. It suffices to show
$$\|Q(u,v)\|_{H^r_{l,0}} \lesssim \|u\|_{X^r_{s+1,b}} \|v\|_{X^r_{N,b}}$$
for any $N \in \mathbb N$ . By Lemma \ref{Lemma5} we obtain easily:
$$\|Q(u,v)\|_{H^r_{l-1,0}} \lesssim \| \Lambda u \Lambda v\|_{H^r_{l-1,0}} \lesssim \|\Lambda u\|_{X^r_{s,b}} \|\Lambda v\|_{X^r_{N,b}} \le \|u\|_{X^r_{s+1,b}} \|v\|_{X^r_{l,b}}$$
for sufficiently large $N$ . 

From now on we assume $|\xi-\eta| \ge 1$ . 

In the elliptic case we use the estimate (cf. \cite{FK},Lemma 13.2):
$$|q(\eta,\xi-\eta)| \lesssim \frac{(|\eta| + |\xi-\eta|-|\xi|)^{\half}}{\min(|\eta|,|\xi-\eta|)^{\half}} \, .$$  We argue as in Lemma \ref{Lemma5.1} and Lemma \ref{Lemma5.2}.
In the subcase $|\xi-\eta| \lesssim |\eta|$ we estimate for $1 \ge l \ge \half$ and $s=\half+\frac{1}{r}$ :
\begin{align*}
I &= ||\tau|-|\xi||^{\half} (\int \delta(\tau-|\eta|-|\xi-\eta|) |\eta|^{-sr}  d\eta)^{\frac{1}{r}} \\
	& \sim ||\tau|-|\xi||^{\half} |\tau|^{\frac{A}{r}} ||\tau|-|\xi||^{\frac{B}{r}} = 1 \, ,
	\end{align*}
where by \cite{FK}, Prop. 4.3 we obtain $A=\max(sr,\frac{3}{2})-sr=0$ , $B= 1-\max(sr,\frac{3}{2}) = -\frac{r}{2}$ , which implies
$$\|q(u,v)\|_{H^r_{l-1,0}} \le \|q(u,v)\|_{H^r_{0,0}} \le \|u\|_{X^r_{\half+\frac{1}{r},b}} \|v\|_{X^r_{-\half,b}} \le \|u\|_{X^r_{s,b}} \|v\|_{X^r_{l-1,b}} \, . $$
If $l \ge 1$ we apply the fractional Leibniz and reduce to Lemma \ref{Lemma5.1} or Lemma \ref{Lemma5.2} using $s >\frac{1}{r}$ . \\
In the subcase $|\eta| \ll |\xi-\eta| \sim |\xi|$ we assume $s= \frac{1}{r}$ and obtain similarly
\begin{align*}
I =& |\xi|^{l-1} ||\tau|-|\xi||^{\half} (\int \delta(\tau-|\eta|-|\xi-\eta|) |\eta|^{-(s+\half)r} |\xi-\eta|^{(1-l)r} d\eta)^{\frac{1}{r}} \\
& \sim ||\tau|-|\xi||^{\half} (\int \delta(\tau-|\eta|-|\xi-\eta|) |\eta|^{-(s+\half)r}  d\eta)^{\frac{1}{r}} \\
& \sim ||\tau|-|\xi||^{\half} |\tau|^{\frac{A}{r}} ||\tau|-|\xi||^{\frac{B}{r}} 
 = 1 \, ,
\end{align*}
where $A=\max((s+\half)r,\frac{3}{2})-(s+\half)r=0$ , $B=1-\max((s+\half)r,\frac{3}{2}) = - \frac{r}{2}$ . This implies the claimed estimate.

In the hyperbolic case we use (cf. \cite{FK}, Lemma 13.2):
$$|q(\eta,\xi-\eta)| \lesssim \frac{|\xi|^{\half}(|\xi| - ||\eta|-|\xi-\eta||)^{\half}}{(|\eta|\,|\xi-\eta|)^{\half}} \, .$$ 
Now by an elementary calculation (cf. \cite{AFS1}) we obtain
$$(||\xi|-||\eta|-|\xi-\eta||)^{\half} \lesssim ||\tau|-|\xi||^{\half} + |\lambda+|\eta||^{\half} + |\lambda - \tau+|\xi-\eta||^{\half} \, . $$
Thus we reduce to the estimates
\begin{align*}
\|uv\|_{H^r_{l-\half,\half}} & \lesssim \|u\|_{X^r_{s+\half,b}} \|v\|_{X^r_{l-\half,b}} \, , \\
\|uv\|_{H^r_{l-\half,0}} & \lesssim \|u\|_{X^r_{s+\half,b-\half}} \|v\|_{X^r_{l-\half,b}} \, , \\
\|uv\|_{H^r_{l-\half,0}} & \lesssim \|u\|_{X^r_{s+\half,b}} \|v\|_{X^r_{l-\half,b-\half}} \, . 
\end{align*}
For $l \ge \half$ , $s\ge 1+\frac{1}{2r}$ the first estimate follows from Lemma \ref{Lemma6} by the fractional Leibniz rule, because $s+\half \ge \frac{3}{2} + \frac{1}{2r} > \frac{1}{r}+b$ for $b =\frac{1}{r}+$ . The other two estimates follow from Lemma \ref{Lemma5}. The proof of (\ref{23}) is complete.\\
{\bf Proof of (\ref{24}) and (\ref{25}):} Concerning (\ref{24}) we have to show
$$\|q(u,v)\|_{H^r_{l-1,0}} \lesssim \|u\|_{X^r_{s-1,b}} \|u\|_{X^r_{s-1,b}} \, . $$
This is implied by Lemma \ref{Lemma5.1} or Lemma \ref{Lemma5.2}, if $l \le 1$ and $s > 1+\frac{1}{2r}$ or more generally $l \le s$ and $2s-2-(l-1) >\frac{1}{r} \, \Leftrightarrow 2s-l > 1+\frac{1}{r}$ . Similarly (\ref{25}) follows from Lemma \ref{Lemma5.3}.\\
{\bf Proof of (\ref{26}):} By Lemma \ref{Lemma2.1} we have to prove
$$\|q_{12}(u,\Lambda v)\|_{X^r_{s-1,0}} \lesssim \|u\|_{X^r_{s,b}} \|v\|_{X^r_{s,b}} \, $$
which by the fractional Leibniz rule results from Lemma \ref{Lemma5.1}. Moreover we need
$$\|q_{0}(u,\Lambda v)\|_{X^r_{s-1,0}} \lesssim \|u\|_{X^r_{s,b}} \|v\|_{X^r_{s,b}} \, $$
which is given by Lemma \ref{Lemma5.3}. Finally
$$ \| (\Lambda^{-2}u)v\|_{X^r_{s-1,0}} + \| u(\Lambda^{-2}v)\|_{X^r_{s-1,0}} \lesssim \|u\|_{X^r_{s,b}} \|v\|_{X^r_{s-1,b}}$$
by Lemma \ref{Lemma5} for $ s+2 > \frac{2}{r}$ , which is fulfilled. \\
{\bf Proof of (\ref{27}) and (\ref{28}):} The estimates result from Lemma \ref{Lemma5}, if $s \ge 1$. \\
{\bf Proof of (\ref{29}):} We reduce to
$$\|uv\|_{H^r_{s-1,0}} \lesssim \|u\|_{X^r_{l+1,b}} \|v\|_{X^r_{l,b}} \, . $$
By the fractional Leibniz rule this is implied by Lemma \ref{Lemma4}, if $l\ge s-1$ and $2l+1-(s-1)= 2l-s+2 >\frac{3}{2r}$. This is one of our assumptions. It is fulfilled for $l=\half$ and $s= 1+\frac{1}{2r}+$ .\\
{\bf Proof of (\ref{30}):} The estimate reduces to
$$\|uv\|_{H^r_{l-1,0}} \lesssim \|u\|_{X^r_{s+2,b}} \|v\|_{X^r_{l-1,b}} \, . $$
If $l \ge 1$ this easily follows from Lemma \ref{Lemma5}. Assume from now on $0 \le l < 1$. The result is by duality equivalent to
$$\|uw\|_{H^{r'}_{1-l,-b}} \lesssim \|u\|_{H^r_{s+2,b}} \|w\|_{H^{r'}_{1-l,0}} \, , $$
which by the fractional Leibniz rule reduces to the estimates
$$\|uw\|_{H^{r'}_{0,0}} \lesssim \|u\|_{H^r_{s+2,b}} \|w\|_{H^{r'}_{0,0}}$$
and 
$$\|uw\|_{H^{r'}_{0,0}} \lesssim \|u\|_{H^r_{s+l+1,b}} \|w\|_{H^{r'}_{0,0}} \, . $$
Now we obtain
\begin{align*}
\|uw\|_{H^{r'}_{0,0}} & = \|\widehat{uw}\|_{L^r_{\tau \xi}} \le \|\widehat{u}\|_{L^1_{\tau \xi}} \|\widehat{w}\|_{L^r_{\tau \xi}} \\
& \lesssim \|\langle \xi \rangle^{-\frac{2}{r}-} \langle |\tau|-|\xi|\rangle^{-\frac{1}{r}-}\|_{L^r_{\tau\xi}} \|\langle \xi \rangle^{\frac{2}{r}+} \langle |\tau|-|\xi|\rangle^{\frac{1}{r}+} \widehat{u}\|_{L^{r'}_{\tau \xi}} \|\widehat{w}\|_{L^r_{\tau \xi}} \\
& \lesssim\|u\|_{H^r_{\frac{2}{r}+,\frac{1}{r}+}} \|w\|_{H^{r'}_{0,0}} \lesssim \|u\|_{H^r_{s+1,b}} \|w\|_{H^{r'}_{0,0}} \, ,
\end{align*}if $s > \frac{2}{r}-1$ , which is fulfilled.\\
{\bf Proof of (\ref{31}):} We reduce to
$$\|uv\|_{H^r_{l-1,0}} \lesssim \|u\|_{X^r_{s+1,b}} \|v\|_{X^r_{s-1,b}} \, , $$
which results from Lemma \ref{Lemma5}, if $l \le 1$ and  $s \ge 1$ or $l \ge 1$ and $2s-l > \frac{2}{r}-1$ and moreover $s\ge l$ . \\
{\bf Proof of (\ref{32}):} The estimate
$$\|uvw\|_{H^r_{s-1,0}} \lesssim \|u\|_{X^r_{l+1,b}} \|v\|_{X^r_{s,b}}  \|w\|_{X^r_{s,b}}$$
reduces by the fractional Leibniz rule to the estimates
$$\|uvw\|_{X^r_{0,0}} \lesssim \|u\|_{X^r_{l-s+2,b}} \|v\|_{X^r_{s,b}}  \|w\|_{X^r_{s,b}}$$
and
$$\|uvw\|_{X^r_{0,0}} \lesssim \|u\|_{X^r_{l+1,b}} \|v\|_{X^r_{1,b}}  \|w\|_{X^r_{s,b}} \, .$$
Now we obtain by Lemma \ref{Lemma5} :
\begin{align*}
\|uvw\|_{X^r_{0,0}} &\lesssim \|u\|_{X^r_{l-s+2,b}} \|vw\|_{X^r_{s-l-2+\frac{2}{r}+,0}} \\
& \lesssim \|u\|_{X^r_{l-s+2,b}} \|v\|_{X^r_{s,b}}  \|w\|_{X^r_{s,b}} \, .
\end{align*}
The last estimate results from Lemma \ref{Lemma5}, because $2s-(s-l-2+\frac{2}{r}) = s+l-\frac{2}{r}+2 > 1+\frac{1}{2r} + \frac{1}{2r}-\frac{2}{r}+2=3-\frac{1}{r} > \frac{2}{r}$ . Moreover in exactly the same way
we obtain
\begin{align*}
\|uvw\|_{X^r_{0,0}} &\lesssim \|u\|_{X^r_{l+1,b}} \|vw\|_{X^r_{\frac{2}{r}-l-1+,0}} \\
& \lesssim \|u\|_{X^r_{l+1,b}} \|v\|_{X^r_{1,b}}  \|w\|_{X^r_{s,b}} \, .
\end{align*}
{\bf Proof of (\ref{33}):} We apply Lemma \ref{Lemma5} which implies
$$\|u \Lambda^{-1}(vw)\|_{H^r_{s-1,0}} \lesssim \|u\|_{X^r_{l,b}} \|\Lambda^{-1}(vw)\|_{X^r_{\frac{2}{r}+s-1-l+,0}} \lesssim \|u\|_{X^r_{l,b}} \|v\|_{X^r_{s,b}} \|w\|_{X^r_{s,b}} \, ,$$
because $2s- (\frac{2}{r}-l+s-2) > 1+\frac{1}{2r}+\frac{1}{2r}-\frac{2}{r}+1 = 3-\frac{1}{r} > \frac{2}{r}$  and $l\ge s-1$ . \\
{\bf Proof of (\ref{35}):} Lemma \ref{Lemma5} implies
$$\|u vw\|_{H^r_{s-1,0}} \lesssim \|u\|_{X^r_{s,b}} \|vw\|_{X^r_{\frac{2}{r}-1+,0}} \lesssim \|u\|_{X^r_{s,b}} \|v\|_{X^r_{s,b}} \|w\|_{X^r_{s,b}} \, , $$
because $2s-\frac{2}{r}+1 > 2+\frac{1}{r}-\frac{2}{r}+1 >\frac{2}{r}$ . \\
{\bf Proof of (\ref{36}):} For $l\le 1$ we obtain by Lemma \ref{Lemma5}:
$$\|u vw\|_{H^r_{l-1,0}} \lesssim  \|u vw\|_{H^r_{0,0}} \lesssim \|w\|_{X^r_{l,b}} \|uv\|_{X^r_{\frac{2}{r}-l+,0}} \lesssim \|w\|_{X^r_{l,b}} \|u\|_{X^r_{s,b}} \|v\|_{X^r_{s,b}} \,  $$
For the last estimate we applied Lemma \ref{Lemma4}, where we used
$2s-(\frac{2}{r}-l) > 2+\frac{1}{r} -\frac{2}{r}+\frac{1}{2r} = 2-\frac{1}{2r} > \frac{3}{2r}$ .\\
If $l \ge 1$ we use the fractional Leibniz rule which reduces the claimed estimate to
$$
\|uvw\|_{H^r_{0,0}} \lesssim \|u\|_{H^r_{1,b}} \|vw\|_{H^r_{\frac{2}{r}-1+,0}} 
 \lesssim \|u\|_{H^r_{1,b}} \|v\|_{H^r_{s,b}} \|w\|_{H^r_{s,b}} \,  
$$
by applying Lemma \ref{Lemma5} twice, where we used $2s-(\frac{2}{r}-1) > \frac{2}{r}$ .
Moreover
 $$
\|uvw\|_{H^r_{0,0}} \lesssim \|u\|_{H^r_{l,b}} \|vw\|_{H^r_{\frac{2}{r}-l+,0}} 
 \lesssim \|u\|_{H^r_{l,b}} \|v\|_{H^r_{s-(l-1),b}} \|w\|_{H^r_{s,b}} \,  
$$
by Lemma \ref{Lemma5} using $2s-(l-1)-(\frac{2}{r}-l) > \frac{2}{r}$ .\\
{\bf Proof of (\ref{34}):} We obtain
\begin{align*}
\|\Lambda^{-1}(uv)wz\|_{H^r_{s-1,0}} & \lesssim \|\Lambda^{-1}(uv)\|_{H^r_{\frac{7}{2r}-2+,b}} \|wz\|_{H^r_{s-(\frac{3}{2r}-1),0}} \\
& \lesssim \|u\|_{X^r_{s,b}} \|v\|_{X^r_{s,b}} \|w\|_{X^r_{s,b}} \|z\|_{X^r_{s,b}} \, .
\end{align*}
For the first step we applied Lemma \ref{Lemma5} using $s-(\frac{3}{2r}-1)+\frac{7}{2r}-2-(s-1) = \frac{2}{r}$ . For the last estimate we apply Lemma \ref{Lemma6} using $2s-(\frac{7}{2r}-3) > 2+\frac{1}{r} -\frac{7}{2r}+3=5-\frac{5}{2r} > \frac{1}{r}+b$ and also Lemma \ref{Lemma5} using $2s-s+\frac{3}{2r}-1 > 1+\frac{1}{2r}+\frac{3}{2r}-1 = \frac{2}{r}$ .\\
{\bf Proof of (\ref{37}):} For $l \le 1$ we use Lemma \ref{Lemma5} and Lemma \ref{Lemma6} :
\begin{align*}
\|uvwz\|_{H^r_{l-1,0}}\lesssim \|uvwz\|_{H^r_{0,0}} & \lesssim \|uv\|_{H^r_{\frac{1}{r}+,b}} \|wz\|_{H^r_{\frac{1}{r},0}} \\
& \lesssim \|u\|_{X^r_{s,b}} \|v\|_{X^r_{s,b}} \|w\|_{X^r_{s,b}}\|z\|_{X^r_{s,b}} \, ,
\end{align*}
where we used $2s-\frac{1}{r} > 2 > \frac{1}{r}+b> \frac{2}{r} $ for $b=\frac{1}{r}+$ . For $l>1$ we use the fractional Leibniz rule  which reduces the claimed estimate to
\begin{align*}
 \|uvwz\|_{H^r_{0,0}} & \lesssim \|uv\|_{H^r_{\frac{1}{r}+,b}} \|wz\|_{H^r_{\frac{1}{r},0}} \\
& \lesssim \|u\|_{X^r_{s,b}} \|v\|_{X^r_{s,b}} \|w\|_{X^r_{s-(l-1),b}}\|z\|_{X^r_{s,b}} \, ,
\end{align*}
provided $2s-(l-1)-\frac{1}{r} > \frac{2}{r} \Leftrightarrow 2s-l > \frac{3}{r}-1$ and $2s-\frac{1}{r} >2>\frac{1}{r}+b$ for $b=\frac{1}{r}+$.	
	\end{proof}

\section{Proof of (\ref{21}) - (\ref{37}) in the case $r=2$.} 
We recall the null forms given by
\begin{align*}
Q_0(u,v) & = - \partial_t u \partial_t v + \partial^i u \partial_i v \\
Q_{0i}(u,v) & = \partial_t \partial_i v - \partial_i u \partial_t v \\
Q_{12}(u,v) & = \partial_1 u \partial_2 v - \partial_2 u \partial_1 v \, 
\end{align*}
 and the substitution 
$$ u=u_++u_- \, , \, \partial_t u= i \Lambda(u_+-u_-) \, , \, v=v_++v_- \, , \, \partial_t v = i \Lambda(v_+-v_-) \, . $$
We consider the Fourier symbols
\begin{align*}
q_0(\xi,\eta) & = \langle \xi \rangle \langle \eta \rangle - \langle \xi,\eta \rangle = |\xi||\eta|(1-\frac{\langle \xi, \eta \rangle}{|\xi||\eta|}) + \langle \xi \rangle \langle \eta \rangle - |\xi||\eta| \\
q_{0i}(\xi,\eta) & = - \langle \xi \rangle \eta_i + \xi_i \langle \eta \rangle = |\xi||\eta|(\frac{\xi_i}{|\xi|} - \frac{\eta_i}{|\eta|}) + \xi_i(\langle \eta \rangle - |\eta|) -(\langle \xi \rangle - |\xi|) \eta_i\\
q_{12}(\xi,\eta) & = -\xi_1 \eta_2 + \xi_2 \eta_1 \, .
\end{align*}
Then the Fourier symbols of $Q_0(u,v)$ , $Q_{0i}(u,v)$ and $Q_{12}(u,v)$ are linear combinations of $q_0(\pm_1 \xi,\pm_2 \eta)$ , $q_{0i}(\pm_1 \xi,\pm_2 \eta)$ and $q_{12}(\pm_1 \xi,\pm_2 \eta)$, respectively.

The following simple observation can be found e.g. in \cite{ST}.
	\begin{lemma}
		\label{Lemma1}
		The Fourier symbols satisfy
		\begin{align*}
		|q_0(\pm_1\xi,\pm_2\eta)| & \lesssim |\xi||\eta| \angle{(\pm_1\xi,\pm_2\eta)}^2 + \frac{1}{\min(\langle \xi \rangle,\langle \eta \rangle)} \\
		|q_{0i}(\pm_1\xi,\pm_2\eta)| & \lesssim |\xi||\eta| \angle{(\pm_1\xi,\pm_2\eta)} + \frac{|\xi|}{\langle \eta \rangle} + \frac{|\eta|}{\langle \xi \rangle}  \\
			|q_{12}(\pm_1\xi,\pm_2\eta)| & \lesssim |\xi||\eta| \angle{(\pm_1\xi,\pm_2\eta)}  
		\end{align*}
	\end{lemma}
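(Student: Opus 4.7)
My plan is to prove the three bounds by exploiting the algebraic decompositions the author has already written out in the two displays immediately preceding the lemma. In each case I split the symbol into an angle-dependent ``main'' piece of size $|\xi||\eta|$ times an explicit function of the angle, plus a ``lower-order'' remainder built from the differences $\langle\zeta\rangle - |\zeta| = 1/(\langle\zeta\rangle + |\zeta|)$. Since $|\xi|$, $|\eta|$, $\langle\xi\rangle$, $\langle\eta\rangle$ are invariant under $\xi \mapsto \pm_1\xi$ and $\eta \mapsto \pm_2\eta$, while the angle on the right-hand side becomes $\angle(\pm_1\xi,\pm_2\eta)$ (using $\cos\angle(\pm_1\xi,\pm_2\eta) = \pm_1\pm_2\cos\angle(\xi,\eta)$ and the analogous identity for $\sin$), it suffices to do the bookkeeping for a single sign combination and invoke this invariance at the end.

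The bound for $q_{12}$ is immediate: in two dimensions $q_{12}(\xi,\eta) = \xi_2\eta_1 - \xi_1\eta_2$ is the scalar wedge product of $\xi$ and $\eta$, so $|q_{12}(\xi,\eta)| = |\xi||\eta|\,|\sin\angle(\xi,\eta)|$, and the elementary inequality $|\sin\theta| \le \theta$ on $[0,\pi]$ finishes it.

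For $q_{0i}$, I would plug the identity
\begin{equation*}
q_{0i}(\xi,\eta) = |\xi||\eta|\Bigl(\tfrac{\xi_i}{|\xi|} - \tfrac{\eta_i}{|\eta|}\Bigr) + \xi_i\bigl(\langle\eta\rangle - |\eta|\bigr) - \bigl(\langle\xi\rangle - |\xi|\bigr)\eta_i
\end{equation*}
into the triangle inequality. The first summand is dominated by $|\xi||\eta|\,|\widehat\xi - \widehat\eta| = 2|\xi||\eta|\sin(\angle(\xi,\eta)/2) \le |\xi||\eta|\angle(\xi,\eta)$, where $\widehat\zeta := \zeta/|\zeta|$. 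The remaining two summands are controlled by $|\xi|/\langle\eta\rangle$ and $|\eta|/\langle\xi\rangle$ respectively, via $\langle\zeta\rangle - |\zeta| \le 1/\langle\zeta\rangle$.

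For $q_0$ I would use the analogous decomposition
\begin{equation*}
q_0(\xi,\eta) = |\xi||\eta|\bigl(1 - \cos\angle(\xi,\eta)\bigr) + \bigl(\langle\xi\rangle\langle\eta\rangle - |\xi||\eta|\bigr),
\end{equation*}
estimate the first summand by $\tfrac12|\xi||\eta|\angle(\xi,\eta)^2$ via $1 - \cos\theta \le \theta^2/2$, and rationalize the remainder as
\begin{equation*}
\langle\xi\rangle\langle\eta\rangle - |\xi||\eta| = \frac{1 + |\xi|^2 + |\eta|^2}{\langle\xi\rangle\langle\eta\rangle + |\xi||\eta|},
\end{equation*}
which I would bound by $1/\min(\langle\xi\rangle,\langle\eta\rangle)$. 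Apart from this last step on the low-frequency remainder, which is the only place where a little care is needed, everything reduces to routine trigonometric estimates combined with the elementary identity $\langle\zeta\rangle - |\zeta| = 1/(\langle\zeta\rangle + |\zeta|)$.
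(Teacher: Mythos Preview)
Your arguments for $q_{12}$ and $q_{0i}$ are correct and are exactly the standard ones; the paper itself gives no proof but simply refers to \cite{ST}, so there is nothing further to compare.

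For $q_0$, however, your final step does not go through, and in fact it cannot: the inequality you announce,
\[
\frac{1+|\xi|^2+|\eta|^2}{\langle\xi\rangle\langle\eta\rangle+|\xi||\eta|}
\;\lesssim\;\frac{1}{\min(\langle\xi\rangle,\langle\eta\rangle)},
\]
is false. Take $\xi=0$ and $|\eta|=N$: the left side equals $\sqrt{1+N^2}\sim N$, while the right side is $1$. The same example shows that the $q_0$ bound as printed in the paper is itself in error, since $q_0(0,\eta)=\langle\eta\rangle$ is not bounded by $0\cdot|\eta|\cdot\angle^2+1$. The correct remainder bound, which your rationalization immediately gives (write $1+|\xi|^2+|\eta|^2\le 2\max(\langle\xi\rangle,\langle\eta\rangle)^2$ and $\langle\xi\rangle\langle\eta\rangle+|\xi||\eta|\ge\langle\xi\rangle\langle\eta\rangle$), is
\[
\langle\xi\rangle\langle\eta\rangle-|\xi||\eta|
\;\lesssim\;\frac{\max(\langle\xi\rangle,\langle\eta\rangle)}{\min(\langle\xi\rangle,\langle\eta\rangle)}
\;\sim\;\frac{|\xi|}{\langle\eta\rangle}+\frac{|\eta|}{\langle\xi\rangle}+\frac{1}{\langle\xi\rangle\langle\eta\rangle},
\]
i.e.\ the same lower-order terms as for $q_{0i}$. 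This is precisely what the paper actually uses in its applications: in the proof of (\ref{24}) and (\ref{25}) the author writes ``In the case of $Q=Q_{0i}$ and $Q=Q_0$ we also need $\|uv\|_{H^{l-1,-\half+2\epsilon}}\lesssim\|u\|_{H^{s+1,\half+\epsilon}}\|v\|_{H^{s-1,\half+\epsilon}}$'', which corresponds exactly to the remainder $|\xi|/\langle\eta\rangle+|\eta|/\langle\xi\rangle$ rather than to $1/\min(\langle\xi\rangle,\langle\eta\rangle)$. So your plan is sound once you replace the stated $q_0$ remainder by the one above; the flaw is that you flagged this step as ``needing a little care'' but did not check it, and it is in fact where a misprint in the statement lies.
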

The estimate for the angle in the following Lemma was proven in \cite{AFS1}, Lemma 5:
\begin{lemma}
	\label{Lemma2}
	Let $\alpha,\beta,\gamma \in [0,\half]$ , $\tau,\lambda \in \R$ , $ \xi,\eta \in \R^2 $, $\xi,\eta \neq 0$ . Then the following estimate applies for all signs $\pm_1,\pm_2$ :
$$\angle{(\pm_1 \xi, \pm_2 \eta)} \lesssim \left(\frac{\langle|\tau+\lambda|-|\xi+\eta|\rangle}{\min(\langle\xi \rangle,\langle \eta \rangle)}\right)^{\alpha} +\left(\frac{\langle -\tau \pm_1|\xi|\rangle}{\min(\langle\xi\rangle,\langle \eta \rangle)}\right)^{\beta}
+\left(\frac{\langle -\lambda \pm_2|\eta|\rangle}{\min(\langle\xi \rangle,\langle \eta \rangle)}\right)^{\gamma} \, .
 $$
\end{lemma}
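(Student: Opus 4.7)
The plan is to reduce the claim to the single quantitative bound
\begin{equation*}
\angle(\pm_1\xi,\pm_2\eta)^2 \lesssim \frac{\langle \sigma_0\rangle + \langle \sigma_1\rangle + \langle \sigma_2\rangle}{\min(\langle\xi\rangle,\langle\eta\rangle)},
\end{equation*}
where $\sigma_0 = |\tau+\lambda|-|\xi+\eta|$, $\sigma_1 = -\tau\pm_1|\xi|$, and $\sigma_2 = -\lambda\pm_2|\eta|$, and then to interpolate this against the trivial bound $\angle(\pm_1\xi,\pm_2\eta)\lesssim 1$ to pass from the exponent $1/2$ to arbitrary $\alpha,\beta,\gamma\in[0,1/2]$ on each of the three summands separately.

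The first step is geometric. Setting $u=\pm_1\xi$ and $v=\pm_2\eta$, the law of cosines $(|u|+|v|)^2-|u+v|^2 = 2|u||v|(1-\cos\angle(u,v))$ combined with $\theta^2\lesssim 1-\cos\theta$ on $[0,\pi]$ gives
\begin{equation*}
\angle(\pm_1\xi,\pm_2\eta)^2 \lesssim \frac{|\xi|+|\eta|-|\pm_1\xi\pm_2\eta|}{\min(|\xi|,|\eta|)}.
\end{equation*}
When $\pm_1=\pm_2$, the numerator equals $|\pm_1|\xi|+\pm_2|\eta||-|\xi+\eta|$. When $\pm_1\ne\pm_2$ the numerator is $|\xi|+|\eta|-|\xi-\eta|$, but the symmetrization identity $(|\xi|+|\eta|)^2-|\xi-\eta|^2 = |\xi+\eta|^2-(|\xi|-|\eta|)^2$ together with $|\xi|+|\eta|+|\xi-\eta|\gtrsim |\xi+\eta|+||\xi|-|\eta||$ yields $|\xi|+|\eta|-|\xi-\eta|\lesssim |\xi+\eta|-||\xi|-|\eta||$, and in this case $||\xi|-|\eta||=|\pm_1|\xi|+\pm_2|\eta||$. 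So in either configuration,
\begin{equation*}
\angle(\pm_1\xi,\pm_2\eta)^2 \lesssim \frac{\bigl|\,|\pm_1|\xi|+\pm_2|\eta||-|\xi+\eta|\,\bigr|}{\min(|\xi|,|\eta|)}.
\end{equation*}

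The second step is algebraic. The definitions of $\sigma_1,\sigma_2$ give $\pm_1|\xi|+\pm_2|\eta|=(\tau+\lambda)+\sigma_1+\sigma_2$, so two applications of the reverse triangle inequality yield
\begin{equation*}
\bigl|\,|\pm_1|\xi|+\pm_2|\eta||-|\xi+\eta|\,\bigr| \leq \bigl|\,|\tau+\lambda|-|\xi+\eta|\,\bigr|+|\sigma_1|+|\sigma_2| = |\sigma_0|+|\sigma_1|+|\sigma_2|.
\end{equation*}
Replacing $|\cdot|$ by $\langle\cdot\rangle$ is harmless: in the regime $\min(|\xi|,|\eta|)\gtrsim 1$ it changes only constants, and in the regime $\min(\langle\xi\rangle,\langle\eta\rangle)\lesssim 1$ the stated right-hand side of the lemma already dominates $1\gtrsim\angle$ trivially. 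To weaken the exponents, split into three summands and use $x^{1/2}\le x^\alpha$ whenever $x\in[0,1]$ and $\alpha\le 1/2$, falling back on $\angle\lesssim 1$ otherwise; this independently replaces the exponent $1/2$ by any $\alpha,\beta,\gamma\in[0,1/2]$.

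The main obstacle is the mixed-sign case $\pm_1\ne\pm_2$: the natural angle estimate involves $|\xi-\eta|$, whereas the hyperbolic weight in the statement involves $|\xi+\eta|$. The symmetrization identity above is precisely what bridges the two; everything else reduces to reverse triangle inequalities and a scalar interpolation.
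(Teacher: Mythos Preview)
Your argument is correct. The paper does not prove this lemma at all; it simply cites \cite{AFS1}, Lemma~5, so there is no in-paper proof to compare against. What you have written is a clean self-contained version of the standard argument: the law-of-cosines bound $\angle(u,v)^2 \lesssim (|u|+|v|-|u+v|)/\min(|u|,|v|)$, the algebraic identity linking $|\xi|+|\eta|-|\xi-\eta|$ to $|\xi+\eta|-\bigl||\xi|-|\eta|\bigr|$ in the mixed-sign case, the triangle-inequality decomposition $\bigl|\,|\pm_1|\xi|{+}\pm_2|\eta|\,|-|\xi+\eta|\bigr|\le |\sigma_0|+|\sigma_1|+|\sigma_2|$, and finally the pointwise reduction of exponents via $x^{1/2}\le x^{\alpha}$ for $x\le 1$, with the trivial bound $\angle\lesssim 1$ handling any term with $x_j>1$. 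Each step is sound, including the passage from $|\cdot|$ to $\langle\cdot\rangle$ and the case $\min(\langle\xi\rangle,\langle\eta\rangle)\lesssim 1$.
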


The following bilinear estimates for wave-Sobolev spaces were proven in \cite{AFS}, Lemma 7.
\begin{prop}
\label{Prop.7.1}
	Let $s_0,s_1,s_2 \in \R$ , $b_0,b_1,b_2 \ge 0$ . Assume that
	 \begin{align*}
	 b_0+b_1+b_2 &> \half \\
	 s_0+s_1+s_2 &>\frac{3}{2}-(b_0+b_1+b_2) \\
	 s_0+s_1+s_2 &> 1 - \min_{i \neq j} (b_i+b_j) \\
	s_0+s_1+s_2 &> \half - \min_i b_i  \\
	s_0+s_1+s_2& > 1-\min(b_0+s_1+s_2,s_0+b_1+s_2,s_0+s_1+b_2) \\
	s_0+s_1+s_2 & \ge \frac{3}{4} \\
	\min_{i \neq j} (s_i+s_j) & \ge 0 \, ,
	\end{align*}
where the last two inequalities are not both equalities. Then the following estimate applies:
$$ \|uv\|_{H^{-s_0,-b_0}} \lesssim \|u\|_{H^{s_1,b_1}} \|v\|_{H^{s_2,b_2}} \, .$$
If $b_0 <0$ , this remains true provided we  additionally assume $b_0+b_1 > 0$ , $b_0+b_2 > 0$ and $s_1+s_2 > -b_0$ .
\end{prop}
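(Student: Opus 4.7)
My plan is to prove this by dyadic decomposition in frequency and modulation, reducing the bilinear estimate to the kind of measure/convolution estimates that the paper already invokes from Foschi--Klainerman \cite{FK}. By duality and Plancherel the claim is equivalent to the trilinear bound
\[
\left| \int \widehat{u_0}(\tau_0,\xi_0)\,\widehat{u_1}(\tau_1,\xi_1)\,\widehat{u_2}(\tau_2,\xi_2)\,\delta(\tau_0+\tau_1+\tau_2)\,\delta(\xi_0+\xi_1+\xi_2)\,d\tau\,d\xi \right| \lesssim \prod_{j=0}^{2} \|u_j\|_{H^{s_j,b_j}},
\]
so I would decompose each factor into Littlewood--Paley pieces supported where $|\xi_j|\sim N_j$ and $\langle |\tau_j|-|\xi_j|\rangle \sim L_j$, and prove the estimate for each collection of dyadic sizes with a gain that is summable under the stated conditions.

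For each fixed $(N_j,L_j)$, two Cauchy--Schwarz applications reduce matters to bounding the measure $|E_{N,L}|$ of the intersection of thickened cones arising from the convolution constraint, and multiplying this bound by $\prod_j N_j^{-s_j} L_j^{-b_j}$. The Foschi--Klainerman lemmas (Lemma 4.3, Lemma 4.4, and Proposition 4.5 in \cite{FK}, which the paper has already exploited in Section~5) give sharp measure estimates in all sign configurations: the elliptic case $\pm_1=\pm_2$ separates from the hyperbolic case $\pm_1\neq\pm_2$, and each in turn splits according to whether the output cone $|\xi_0|$ is comparable to or dominated by $|\xi_1|+|\xi_2|$. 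The seven conditions in the statement correspond exactly to the summability thresholds one obtains from the resulting geometric series in $N_j$ and $L_j$ across the various regimes: for example, $s_0+s_1+s_2 > \frac{3}{2}-(b_0+b_1+b_2)$ is the condition arising when the dominant contribution is from the generic region where no modulation concentrates, while the conditions involving $\min_i b_i$ or $\min_{i\neq j}(b_i+b_j)$ come from regimes where one or two of the modulations $L_j$ saturate.

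The main obstacle, as is always the case for sharp multilinear $X^{s,b}$ estimates, is the bookkeeping: one must verify that each of the potential divergences in the summation over dyadic blocks is blocked by one of the listed conditions, and the exclusion that the last two inequalities cannot both be equalities is precisely the standard endpoint restriction preventing logarithmic divergence when the scaling critical condition $s_0+s_1+s_2 = \frac{3}{4}$ coincides with $\min_{i\neq j}(s_i+s_j)=0$. The factor of $\frac{3}{4}$ here reflects the $2$-dimensional wave equation analog of the $(n-1)/2$ threshold.

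For the supplementary case $b_0<0$, I would first redistribute modulation weight using the hyperbolic Leibniz rule (\ref{HLR}), which gives $\langle|\tau_0|-|\xi_0|\rangle \lesssim \langle|\tau_1|-|\xi_1|\rangle + \langle|\tau_2|-|\xi_2|\rangle + b_{\pm}(\xi_1,\xi_2)$. Inserting this into the weight $\langle|\tau_0|-|\xi_0|\rangle^{-b_0}$ and splitting into three pieces reduces the problem to an instance of the original estimate with $b_0 \ge 0$ but with $b_1$ or $b_2$ decreased by $|b_0|$ (together with a boundary term handled analogously to the $B^b_{\pm}$ operator in Lemma~\ref{Lemma6}); the three extra hypotheses $b_0+b_1>0$, $b_0+b_2>0$, $s_1+s_2>-b_0$ are precisely what is needed to ensure the reduced estimates still satisfy the seven conditions of the main case.
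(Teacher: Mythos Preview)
The paper does not actually prove this proposition: it is simply quoted as a known result from d'Ancona--Foschi--Selberg \cite{AFS}, Lemma~7. Your proposed approach---duality to a trilinear form, dyadic decomposition in frequency and modulation, reduction by Cauchy--Schwarz to volume/measure bounds, and summation under the stated conditions---is precisely the method carried out in \cite{AFS}, so in that sense your strategy is the right one and matches what the paper implicitly relies on.

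One technical point you should correct: the Foschi--Klainerman lemmas you invoke (Lemma~4.3, Lemma~4.4, Proposition~4.5 of \cite{FK}) compute exact asymptotics for integrals of the form $\int \delta(\tau - |\eta| \mp |\xi-\eta|)\, w(\eta)\,d\eta$, i.e.\ convolutions along the \emph{sharp} light cone. In Section~5 the present paper uses these for free waves and then appeals to the transfer principle (Proposition~\ref{Prop.0.1}), which requires $b>1/r$. For the full range of $b_j$ covered by Proposition~\ref{Prop.7.1}---in particular when some $b_j$ are small or zero---that route is unavailable, and one must instead estimate volumes of \emph{thickened} regions $\{(\tau,\xi): |\xi|\sim N_j,\ ||\tau|-|\xi||\sim L_j\}$ under the convolution constraint. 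The \cite{AFS} argument develops these thickened-cone bounds directly (this is the content of their dyadic building blocks). Your outline is sound, but you cannot literally plug in the \cite{FK} delta-function asymptotics; you would need to supply the corresponding finite-thickness estimates, which is where most of the work in \cite{AFS} lies.
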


\begin{Cor}
\label{Cor.1}
If $b_0,b_1,b_2 \ge 0$ , $b_0+b_1+b_2 > \half$ and $\min_{i \neq j}(s_i+s_j) \ge 0$ the assumption $ s_0+s_1+s_2 > 1$ is sufficient.
\end{Cor}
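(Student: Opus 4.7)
The plan is to derive the corollary directly from Proposition \ref{Prop.7.1} by verifying that, under the stronger assumption $s_0+s_1+s_2>1$ together with the nonnegativity of the $b_i$ and of $\min_{i\neq j}(s_i+s_j)$, each of the seven inequalities in the hypothesis of Proposition \ref{Prop.7.1} is automatically satisfied. So the work reduces to a chain of elementary numerical comparisons; no analytic estimate is needed.

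First I would observe that the two hypotheses of the corollary, $b_0+b_1+b_2>\tfrac12$ and $\min_{i\neq j}(s_i+s_j)\ge 0$, are precisely the first and the seventh of the listed conditions, so nothing is to be checked there. Next, since $b_0+b_1+b_2>\tfrac12$, one has $\tfrac32-(b_0+b_1+b_2)<1<s_0+s_1+s_2$, which gives the second condition. For the third and fourth conditions, the nonnegativity $b_i\ge 0$ yields $\min_{i\neq j}(b_i+b_j)\ge 0$ and $\min_i b_i\ge 0$, so the right-hand sides $1-\min_{i\neq j}(b_i+b_j)$ and $\tfrac12-\min_i b_i$ are both at most $1$, hence strictly less than $s_0+s_1+s_2$. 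The sixth condition, $s_0+s_1+s_2\ge\tfrac34$, is immediate from $s_0+s_1+s_2>1$.

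The only condition which is not quite syntactic is the fifth one: $s_0+s_1+s_2>1-\min(b_0+s_1+s_2,\,s_0+b_1+s_2,\,s_0+s_1+b_2)$. Here I would use $b_i\ge 0$ and $\min_{i\neq j}(s_i+s_j)\ge 0$ to conclude that each of the three quantities $b_i+s_j+s_k$ (for $\{i,j,k\}=\{0,1,2\}$) is nonnegative, so their minimum is nonnegative, while the right-hand side $1-(s_0+s_1+s_2)$ is strictly negative because $s_0+s_1+s_2>1$. The strict inequality follows.

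Finally, the clause ``the last two inequalities are not both equalities'' in Proposition \ref{Prop.7.1} refers to $s_0+s_1+s_2\ge\tfrac34$ and $\min_{i\neq j}(s_i+s_j)\ge 0$; since $s_0+s_1+s_2>1>\tfrac34$ is strict, this proviso is automatically met. With all seven hypotheses of Proposition \ref{Prop.7.1} in force, the bilinear estimate follows. There is essentially no obstacle here — the corollary is just a streamlined book-keeping consequence of the proposition, intended for convenient use in the later multilinear arguments.
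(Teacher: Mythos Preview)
Your proposal is correct and matches the paper's intent: the corollary is stated in the paper without an explicit proof, precisely because it follows by the routine verification you carried out --- checking that $s_0+s_1+s_2>1$ together with $b_i\ge 0$, $b_0+b_1+b_2>\tfrac12$, and $\min_{i\neq j}(s_i+s_j)\ge 0$ forces all seven hypotheses of Proposition~\ref{Prop.7.1} (with the sixth strict, so the proviso is met). Your handling of the fifth condition via the rearrangement $\min(b_i+s_j+s_k)>1-(s_0+s_1+s_2)$ is correct, if slightly tersely worded.
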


\begin{Cor}
	\label{Cor.2}
	If $b_0 \ge 0$ , $b_1,b_2 > \half$ the following assumptions are sufficient:
	$$s_0+s_1+s_2 > 1-(b_0+s_1+s_2) \, , \, s_0+s_1+s_2 \ge \frac{3}{4} \,, \, \min_{i \neq j}(s_i+s_j) \ge 0 \, ,$$  where the last two inequalities are not both equalities.
\end{Cor}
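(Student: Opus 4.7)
My plan is to deduce Corollary \ref{Cor.2} directly from Proposition \ref{Prop.7.1} by verifying, one at a time, each of the seven hypotheses of the proposition under the given assumptions $b_0 \ge 0$, $b_1, b_2 > \frac{1}{2}$, $s_0+s_1+s_2 \ge \frac{3}{4}$, $\min_{i \neq j}(s_i+s_j) \ge 0$, the explicit bound $s_0+s_1+s_2 > 1-(b_0+s_1+s_2)$, and the clause that the last two inequalities are not both equalities.

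First, $b_1+b_2 > 1$ yields $b_0+b_1+b_2 > 1 > \frac{1}{2}$, which handles the first hypothesis of Proposition \ref{Prop.7.1}. For the next three hypotheses, each of which compares $s_0+s_1+s_2$ to a quantity involving only the $b_i$'s, I exploit the crude lower bound $s_0+s_1+s_2 \ge \frac{3}{4}$: one has $\frac{3}{2}-(b_0+b_1+b_2) < \frac{1}{2}$ for hypothesis 2; the bound $\min_{i\neq j}(b_i+b_j) > \frac{1}{2}$ (which follows from $b_0 \ge 0$ and $b_1, b_2 > \frac{1}{2}$) gives hypothesis 3; and $\min_i b_i \ge 0$ gives hypothesis 4. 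In each case the right-hand side is at most $\frac{1}{2}$, while the left-hand side is at least $\frac{3}{4}$, and the strictness of $b_1, b_2 > \frac{1}{2}$ together with the non-equality clause ensures the strict inequality needed.

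The central point is the fifth hypothesis, which demands strict inequality against each of the three candidates $b_0+s_1+s_2$, $s_0+b_1+s_2$, $s_0+s_1+b_2$ in the minimum. The first candidate is precisely the explicit hypothesis of the corollary. For the remaining two candidates the trick is the simple algebraic identity
\begin{equation*}
2s_0+s_1+2s_2 = (s_0+s_1+s_2) + (s_0+s_2) \ge \tfrac{3}{4} + 0 = \tfrac{3}{4},
\end{equation*}
which combined with $b_1 > \frac{1}{2}$ yields $(s_0+s_1+s_2)+(s_0+b_1+s_2) > \frac{3}{4}+\frac{1}{2} > 1$, the required strict inequality. The symmetric decomposition $2s_0+2s_1+s_2 = (s_0+s_1+s_2)+(s_0+s_1) \ge \frac{3}{4}$, paired with $b_2 > \frac{1}{2}$, handles the last candidate. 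Hypotheses 6 and 7, and the non-equality clause, are quoted verbatim from the corollary's assumptions. I do not foresee any real obstacle: the only noteworthy observation is that the two apparently missing inequalities in hypothesis 5 of Proposition \ref{Prop.7.1} are already encoded, via these decomposition identities, in the combination of $s_0+s_1+s_2 \ge \frac{3}{4}$ and $\min_{i \neq j}(s_i+s_j) \ge 0$.
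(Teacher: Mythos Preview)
Your proof is correct and follows exactly the intended route: the paper states the corollary without proof, as a direct consequence of Proposition~\ref{Prop.7.1}, and your verification of each of the seven hypotheses (with the decomposition $2s_0+s_1+2s_2=(s_0+s_1+s_2)+(s_0+s_2)$ handling the two non-explicit cases of hypothesis~5) is precisely how one checks it.
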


Now we are ready to prove the inequalities (\ref{21}) - (\ref{37}) in the case $r=2$. \\[0.5em]
{\bf Assumption:} $s>\frac{3}{4}$ , $l>-\frac{1}{4}$ , $s\ge l\ge s-1$ , $2s-l > \frac{5}{4}$ , $4s-l > 3$ , $ 3s-2l > \frac{3}{2}$, $2l-s > - \frac{5}{4}$ . \\[0.5em]
{\bf Proof of (\ref{21}) and (\ref{22}):} We have to prove
$$ \|Q(u,v)\|_{H^{s-1,-\half+2\epsilon}} \lesssim \|u\|_{X^{s+1,\half+\epsilon}} \|v\|_{X^{s,\half+\epsilon}} \, . $$ 
By Lemma \ref{Lemma1} and Lemma \ref{Lemma2} we may reduce to the following estimates:
\begin{align*}
\|uv\|_{H^{s-1,0}} & \lesssim \|u\|_{H^{s+\half-2\epsilon,\half+\epsilon}} \|v\|_{H^{s-1,\half+\epsilon}}\\
\|uv\|_{H^{s-1,0}} & \lesssim \|u\|_{H^{s,\half+\epsilon}} \|v\|_{H^{s-\half-2\epsilon,\half+\epsilon}} \\
\|uv\|_{H^{s-1,-\half +2\epsilon}} & \lesssim \|u\|_{H^{s+\half,\epsilon}} \|v\|_{H^{s-1,\half+\epsilon}}\\
\|uv\|_{H^{s-1,-\half+2\epsilon}} & \lesssim \|u\|_{H^{s,\epsilon}} \|v\|_{H^{s-\half,\half+\epsilon}} \\
\|uv\|_{H^{s-1,-\half+2\epsilon}} & \lesssim \|u\|_{H^{s+\half,\half+\epsilon}} \|v\|_{H^{s-1,\epsilon}} \\
\|uv\|_{H^{s-1,-\half+2\epsilon}} & \lesssim \|u\|_{H^{s,\half+\epsilon}} \|v\|_{H^{s-\half,\epsilon}} \, .
\end{align*}
By Cor. \ref{Cor.1} these  estimates are fulfilled for a sufficiently small $\epsilon > 0$ , if $ s > \half $ .
In the case $Q=Q_{0i}$ we additionally have to show
\begin{align*}
\|uv\|_{H^{s-1,-\half+2\epsilon}} & \lesssim \|u\|_{H^{s+2,\half+\epsilon}} \|v\|_{H^{s-1,\half+\epsilon}} \, ,\\
\|uv\|_{H^{s-1,-\half+2\epsilon}} & \lesssim \|u\|_{H^{s,\half+\epsilon}} \|v\|_{H^{s+1,\half+\epsilon}} \, ,
\end{align*}
which are also fulfilled by Cor. \ref{Cor.1} . \\
{\bf Proof of (\ref{23}):}
We need $$ \|Q(u,v)\|_{H^{l-1,-\half+2\epsilon}} \lesssim \|u\|_{X^{s+1,\half+\epsilon}} \|v\|_{X^{l,\half+\epsilon}} \, . $$ 
Using Lemma \ref{Lemma1} and Lemma \ref{Lemma2} we reduce to six estimates as above. A typical one is
$$\|uv\|_{H^{l-1,0}} \lesssim \|u\|_{H^{s+\half-2\epsilon,\half+\epsilon}} \|v\|_{H^{l-1,\half+\epsilon}} \, , $$
which by Cor. \ref{Cor.1} is fulfilled if $s > \half$ and $s>l-\frac{3}{2}$ .  The other estimates may be handled similarly. \\
{\bf Proof of (\ref{24}) and (\ref{25}):}
We have to prove
 $$ \|Q(u,v)\|_{H^{l-1,-\half+2\epsilon}} \lesssim \|u\|_{X^{s,\half+\epsilon}} \|v\|_{X^{s,\half+\epsilon}} \, . $$ 
 This reduces to the following estimates by Lemma \ref{Lemma1} and Lemma \ref{Lemma2}:
 \begin{align*}
 \|uv\|_{H^{l-1,0}} &\lesssim \|u\|_{H^{s-1,\half+\epsilon}} \|v\|_{H^{s-\half-2\epsilon,\half+\epsilon}} \, , \\
 \|uv\|_{H^{l-1,-\half+2\epsilon}} &\lesssim \|u\|_{H^{s-1,\epsilon}} \|v\|_{H^{s-\half,\half+\epsilon}} \, , \\
 \|uv\|_{H^{l-1,-\half+2\epsilon}} &\lesssim \|u\|_{H^{s-1,\half+\epsilon}} \|v\|_{H^{s-\half,\epsilon}} \, .
 \end{align*}
 For the first estimate we use Cor. \ref{Cor.2}. This requires the conditions $s \ge l$ and
 \begin{align*}
 (s-1)+(s-\half) > 0 & \Leftrightarrow \, s > \frac{3}{4} \, , \\
 (1-l)+(s-1)+(s-\half) > \frac{3}{4} & \Leftrightarrow \, 2s-l > \frac{5}{4} \, , \\
 (1-l)+(s-1) +(s-\half) > 1-((s-1)+(s-\half)) > 0 & \Leftrightarrow 4s-l > 3 \, , \\
 (1-l)+(s-1) \ge 0 & \Leftrightarrow \, s \ge l \, .
 \end{align*}
 The second estimate is by duality equivalent to
 $$\|vw\|_{H^{1-s,-\epsilon}} \lesssim \|v\|_{H^{s-\half,\half+\epsilon}} \|w\|_{H^{1-l,\half-2\epsilon}} \, , $$
 which by Cor. \ref{Cor.2} requires $2s-l > \frac{5}{4}$ and moreover
 	$$2s-l-\half > 1-((s-\half)+(1-l)) \, \Leftrightarrow \, 3s-2l > 1 \, . $$
 	The third estimate is by duality equivalent to
  $$\|uw\|_{H^{\half-s,-\epsilon}} \lesssim \|u\|_{H^{s-1,\half+\epsilon}} \|w\|_{H^{1-l,\half-2\epsilon}} \, , $$
 which moreover  requires
 $$2s-l-\half > 1-((s-1)+(1-l)) \, \Leftrightarrow \, 3s-2l > \frac{3}{2} \, . $$
 In the case of $Q=Q_{0i}$ and $Q=Q_0$ we also need
 $$\|uv\|_{H^{l-1,-\half+2\epsilon}} \lesssim \|u\|_{H^{s+1,\half+\epsilon}} \|v\|_{H^{s-1,\half+\epsilon}} \, , $$
 which by Cor. \ref{Cor.1} is fulfilled for $s \ge l$ and
 $$(s+1)+(s-1)+(1-l) > 1 \, \Leftrightarrow \, 2s-l > 0 \, .$$
 {\bf Proof of (\ref{26}):}
 We have to prove
 $$\Gamma^1(u,v)\|_{H^{s-1,-\half+2\epsilon}} \lesssim \|u\|_{H^{s,\half+\epsilon}} \|v\|_{H^{s-1,\half+\epsilon}}\, . $$
 Now we use Lemma \ref{Lemma2.1} :
 $$ \Gamma^1(u,v) \preceq   Q_{12}(D^{-1}u,D^{-1}v) + Q_0(D^{-1}u,D^{-1}v) + (\Lambda^{-2} u)v + u (\Lambda^{-1}v) \, . $$ 
 Thus we need
 $$\|Q(u,v)\|_{H^{s-1,-\half+2\epsilon}} \lesssim \|u\|_{H^{s+1,\half+\epsilon}}\|v\|_{H^{s,\half+\epsilon}} \, , $$
 which is the same estimate, which was treated already in (\ref{21}) and (\ref{22}). Moreover we need the following estimates:
 \begin{align*}
 \|uv\|_{H^{s-1,-\half+2\epsilon}} &\lesssim \|u\|_{H^{s+2,\half+\epsilon}} \|v\|_{H^{s-1,\half+\epsilon}}\, ,\\
 \|uv\|_{H^{s-1,-\half+2\epsilon}} &\lesssim \|u\|_{H^{s,\half+\epsilon}} \|v\|_{H^{s+1,\half+\epsilon}}\, ,
\end{align*}
 which easily follow from Cor. \ref{Cor.1}. \\
 {\bf Proof of (\ref{27}),(\ref{28}) and (\ref{30}):}
These estimates result from Cor. \ref{Cor.1}. \\
{\bf Proof of (\ref{29}):}
This reduces to
$$\|uv\|_{H^{s-1,-\half+2\epsilon}} \lesssim \|u\|_{H^{l+1,\half+\epsilon}} \|v\|_{H^{l,\half+\epsilon}} \, . $$
By Cor. \ref{Cor.2} this requires
$1-s+l+1+l > \frac{3}{4} \, \Leftrightarrow \, 2l-s > - \frac{5}{4}$ and
$1-s+l+1+l > 1-(\half+2l+1) \, \Leftrightarrow \,4l-s > -\frac{5}{2}$
 and $l \ge s-1$ , which hold by our assumptions. \\
 {\bf Proof of (\ref{31}):}
 This reduces to
 $$ \|uv\|_{H^{l-1,-\half+2\epsilon}} \lesssim \|u\|_{H^{s+1,\half+\epsilon}} \|v\|_{H^{s-1,\half+\epsilon}} \, , $$
 which requires by Cor. \ref{Cor.2}: $1-l+s+1+s-1 >\frac{3}{4} \, \Leftrightarrow \, 2s-l > -\frac{1}{4}$ and $1-l+s+1+s-1 > 1-(\half+2s) \, \Leftrightarrow \, 4s-l > -\half$ . \\
 {\bf Proof of (\ref{32}):}
 The desired estimate follows from Cor. \ref{Cor.1} as follows:
 \begin{align*}
 \| u \Lambda^{-1}(vw)\|_{H^{s-1,-\half+2\epsilon}} & \lesssim \|u\|_{H^{l+1,\half+\epsilon}} \| \Lambda^{-1}(vw)\|_{H^{s-\half,0}} \\
 & \lesssim \|u\|_{H^{l+1,\half+\epsilon}} \|v\|_{H^{s,\half+\epsilon}} \|w\|_{H^{s-1,\half+\epsilon}} \, , 
 \end{align*}
 where we used $l > -\half$ and $s > \half$ . \\
 {\bf Proof of (\ref{33}):}
 By Cor. \ref{Cor.1} we obtain
  \begin{align*}
 \| u \Lambda^{-1}(vw)\|_{H^{s-1,-\half+2\epsilon}} & \lesssim \|u\|_{H^{l,\half+\epsilon}} \| \Lambda^{-1}(vw)\|_{H^{s+\half,0}} \\
 & \lesssim \|u\|_{H^{l,\half+\epsilon}} \|v\|_{H^{s,\half+\epsilon}} \|w\|_{H^{s,\half+\epsilon}} \, , 
 \end{align*}
 because $ l \ge s-1$ ,  $l > -\half$ and $s > \half$ . \\
{\bf Proof of (\ref{35}) and (\ref{36}):}
By Cor. \ref{Cor.1} we obtain for $s > \half$ :
$$\|uvw\|_{H^{s-1,-\half+2\epsilon}} \lesssim \|u\|_{H^{s,\half+\epsilon}} \|vw\|_{H^{s-\half,0}} 
  \lesssim \|u\|_{H^{s,\half+\epsilon}} \|v\|_{H^{s,\half+\epsilon}} \|w\|_{H^{s,\half+\epsilon}} \, . $$
 {\bf Proof of (\ref{34}):}
  Using Prop. \ref{Prop.7.1}  we obtain for $s> \half$ :
  \begin{align*}
  \| \Lambda^{-1}(uv) \Lambda^{-1}(wz)\|_{H^{s-1,-\half+2\epsilon}} & \lesssim \|\Lambda^{-1}(uv)\|_{H^{s+,3\epsilon}} \| \Lambda^{-1} (wz)\|_{H^{0,0}} \\
  & \lesssim \|u\|_{H^{s,\half+\epsilon}} \|v\|_{H^{s,\half+\epsilon}} \|w\|_{H^{s,\half+\epsilon}} \|z\|_{H^{s-1,\half+\epsilon}} \, .
  \end{align*}
  {\bf Proof of (\ref{37}):}
  We obtain
  \begin{align*}
  \|uvwz\|_{H^{l-1,-\half+2\epsilon}} & \lesssim \|uv\|_{H^{s-\frac{1}{4},2\epsilon}} \|wz\|_{H^{s-\frac{1}{4},2\epsilon}} \\& \lesssim \|u\|_{H^{s,\half+\epsilon}} \|v\|_{H^{s,\half+\epsilon}} \|w\|_{H^{s,\half+\epsilon}} \|z\|_{H^{s,\half+\epsilon}}
  \end{align*}
by Prop. \ref{Prop.7.1}, where we used $1-l+2s-\half > 1 \, \Leftrightarrow \, 2s-l > -\half$ , $s \ge l-\frac{3}{4}$ and $s >\frac{3}{4}$ .

\section{Proof of Theorem \ref{Theorem0.3}:}
In section 6 we proved the estimates (\ref{21}) - (\ref{37}) for
$$r=1+ \, , \, s=\frac{3}{2}+\epsilon \, , \, l=\half+\epsilon $$
for any $\epsilon>0$ and $b = \frac{1}{r}+$  and in section 7 for
$$r=2 \, , \, s=\frac{3}{4}+\epsilon \, , \, l=-\frac{1}{4}+\epsilon $$
for any $\epsilon >0$ and $ b=\half+$ . By multilinear interpolation this implies the  validity of these estimates for
$$1 < r \le 2 \, , \, s=\frac{3}{2r}+\epsilon \, , \, l=\frac{3}{2r}-1+\epsilon \, , \,  b=\frac{1}{r}+ \, $$
for any $\epsilon > 0$ .
An application of Theorem \ref{Theorem1.1} implies Theorem \ref{Theorem0.3}.

\end{document}